\providecommand\@dotsep{5}
\def\listtodoname{List of Todos}
\def\listoftodos{\@starttoc{tdo}\listtodoname}
\numberwithin{equation}{section}
\def\dis{\displaystyle}
\def\cal{\mathcal}
\newtheorem{lemma}{Lemma}  
\newtheorem{proposition}{Proposition}
\newtheorem{theorem}{Theorem}
\newtheorem{corollary}{Corollary}
\newtheorem{remark}{Remark}
\DeclareMathOperator*{\essinf}{ess\,inf}
\DeclareMathOperator{\cat}{cat}
\title[Solutions for a SBP system with positive potentials] 
{Multiple  solutions  for a Schr\"odinger-Bopp-Podolsky system
with positive potentials}
\author[G. M. Figueiredo]{Giovany M. Figueiredo}
\author[G. Siciliano]{Gaetano Siciliano}
\address[G. M. Figueiredo]{\newline\indent 
Departamento de Matem\'atica 
\newline\indent 
Universidade de Bras\'ilia - UNB 
\newline\indent 
 CEP: 70910-900 Bras\'ilia DF, Brazil}
\email{\href{mailto:giovany@unb.br}{giovany@unb.br}}
\address[G. Siciliano]{\newline\indent Departamento de Matem\'atica
\newline\indent 
Instituto de Matem\'atica e Estat\'istica
\newline\indent 
 Universidade de S\~ao Paulo 
\newline\indent 
Rua do Mat\~ao 1010,  05508-090 S\~ao Paulo, SP, Brazil }
\email{\href{mailto:sicilian@ime.usp.br}{sicilian@ime.usp.br}}
\thanks{Giovany M. Figueiredo was partially
supported by  CNPq and FAPDF, Brazil. Gaetano Siciliano  was partially supported by
Fapesp grant 2018/17264-4, CNPq grant 304660/2018-3, FAPDF and Capes.}
\subjclass[2010]{35A15, 35S05, 58E05, 74G35}
\keywords{Nonlocal Schr\"odinger equation, multiplicity of solutions, Ljusternick-Schnirelmann category, Morse theory}
\begin{document}

\maketitle

\begin{abstract}
In the paper we prove existence of  solutions for a Schr\"odinger-Bopp-Podolsky
system under positive potentials. We use the Ljusternick-Schnirelmann and Morse Theories
to get multiple solutions with {\sl a priori} given ``interaction energy''.

\end{abstract}

\bigskip

\maketitle
\begin{center}
\begin{minipage}{12cm}
\tableofcontents
\end{minipage}
\end{center}

\bigskip

\section{Introduction}

In this paper we are concerned with  existence and multiplicity
results to the following system in $\mathbb R^{3}$
\begin{equation}\label{problema}\tag{$P_{\varepsilon}$}
\left\{
             \begin{array}{l}
              -\varepsilon^{2}\Delta u + Vu+\lambda \phi u+f(u)=0 \medskip\\ 
        -\varepsilon^{2}\Delta \phi + \varepsilon^{4}\Delta^{2} \phi  =u^{2}
             \end{array}
           \right.
\end{equation}
where $\varepsilon > 0$ is a parameter, $V:\mathbb R^{3}\to \mathbb R^{3}$ is a given external potential and 
$f:\mathbb R \to \mathbb R$  
is a  nonlinearity satisfying suitable assumptions that will be given below.
The unknowns are
$$u,\phi:\mathbb R^{3}\to \mathbb R \quad \text{and}\quad \lambda\in \mathbb R.$$

Such a problem has been introduced in \cite{GP} and describes the physical interaction
of a charged particle driven by
the Schr\"odinger equation in the Bopp-Podolsky generalized electrodynamics.
In particular one arrives to a system like \eqref{problema} when looks at
standing waves solutions in the purely electrostatic situation;
indeed $u$ represents the modulus of the wave function of the particle and
$\phi$ is the electrostatic field.
We refer the reader to \cite{GP}
for more details and the physical origin of the system.

Actually there are few papers on Schr\"odinger-Bopp-Podolsky systems.
We cite also \cite{CT,LPT} where the authors study the critical case,
 \cite{H} where the problem has been studied in the Proca setting
on $3$ closed manifolds and
 \cite{GK} where the fibering method of Pohozaev has been 
used to deduce existence of solutions (depending on a parameter)
and even nonexistence.

\medskip
Coming back to our problem,  we  see that,  for any fixed $\varepsilon>0$,
it is equivalent
  to the following one
\begin{equation}\label{equivalente}\tag{$\widetilde{P_{\varepsilon}}$}
\left\{
        \begin{array}{l}
           -\Delta u + V(\varepsilon x)u+\lambda\phi u+f(u)=0 \smallskip
       \\
        -\Delta \phi + \Delta^{2}\phi = u^{2}
             \end{array}
           \right.
\end{equation}
in the sense that, once we find solutions $(\lambda, u, \phi)$ for 
\eqref{equivalente}, 
the triple
 $$\lambda,\quad u(\cdot/\varepsilon), \quad \phi(\cdot/\varepsilon)$$ 
 will be a solution of \eqref{problema}.
  We give now a first set of assumptions. 
  
  On $V$ we start by assuming that \medskip

 \begin{enumerate}[label=(V\arabic*),ref=V\arabic*,start=0]
\item\label{V} $V:\mathbb R^{3}\to \mathbb R$ is in $L^{\infty}_{loc}(\mathbb R^{3})$ 
and satisfies 
$$
0< \essinf_{x\in \mathbb R^3} V(x)=:V_{0}.
$$
\end{enumerate}

The function $f:\mathbb R\to \mathbb R$ is continuous and
\medskip

\begin{enumerate}[label=(f\arabic*),ref=f\arabic*,start=1]
	\item\label{f_{1}} $f(u)\geq0$ for  $u\geq0$ and  $f(u)=0$ for $u\leq0$
\end{enumerate}
\medskip

\noindent or alternatively \medskip
\begin{enumerate}[label=(f\arabic*)',ref=f\arabic*,start=1]
	\item\label{f_{1}'} $f(u)\geq0$ if $u\geq0$ and $f$ is odd, \end{enumerate} 
\medskip
and moreover
\medskip
\begin{enumerate}[label=(f\arabic*),ref=f\arabic*,start=2]
	\item\label{f_{2}} $ \exists \,q\in  (2, 6)$ such that $\lim_{u\rightarrow \infty}{f(u)}/{u^{q-1}}=0$, \medskip
	\item\label{f_{3}} $\lim_{u \rightarrow 0} {f(u)/u}=0$.
\end{enumerate}

\medskip

\noindent  As usual we will denote with $F$ the primitive of $f$ such that $F(0)=0$.

 The natural functional spaces in which find the solutions $u,\phi$ 
of \eqref{equivalente} are
$$
u\in W_{\varepsilon}:=\left\{ u\in H^{1}(\mathbb R^{3}) : \int_{\mathbb R^{3}} V(\varepsilon x)u^{2}<+\infty\right\} $$
$$\phi\in \mathcal D:=\Big\{\phi\in D^{1,2}(\mathbb R^{3}) : \Delta\phi\in L^{2}(\mathbb R^{3})\Big\}
 = \overline{C_{0}^{\infty}(\mathbb R^{3})}^{|\nabla\cdot|_{2}+|\Delta\cdot|_{2}}.
 $$

The space $W_{\varepsilon}$ is an Hilbert space with (squared) norm
$$\|u\|_{W_{\varepsilon}}^{2}:=\int_{\mathbb R^{3}} |\nabla u|^{2} + \int_{\mathbb R^{3}} V(\varepsilon x)u^{2}$$
 and is continuously embedded into $H^{1}(\mathbb R^{3})$.

The space $\mathcal D$ has been 
 introduced and deeply studied in  \cite{GP}, where it is proved that
  $\mathcal D\hookrightarrow L^{p}(\mathbb R^{3})$ for $p\in [6,+\infty].$

Actually  problem \eqref{equivalente} can be simplified more.
Indeed, as it is standard in these kind of systems (see  \cite{GP} for details) 
 a usual {\sl reduction argument} transforms  \eqref{equivalente} into the following nonlocal equation 
\begin{equation}\label{eq:equazione}
-\Delta u+ V(\varepsilon x) u + \lambda\phi_{u} u +f(u)=0 \quad \text{ in } \ \mathbb R^{3}
\end{equation}
where 
\begin{equation}\label{eq:phiu}
\phi_{u}(x)=\int_{\mathbb R^{3}} \frac{1-e^{-|x-y|}}{|x-y|}u^{2}(y)dy.
\end{equation}
Moreover $\phi_{u}\in \mathcal D$ if $u\in H^{1}(\mathbb R^{3})$.
Hence from now on we will refer always to \eqref{eq:equazione}
in the  only unknowns $u$ and $\lambda$, since $\phi_{u}$ is  
determined by $u$
by the above formula. 

\medskip

Fixed $ \varepsilon>0 $, by a solution of \eqref{eq:equazione} we mean a pair 
$(u,\lambda)\in   W_{\varepsilon}\times \mathbb R$ 
such that 
\begin{equation}\label{eq:defsol}
\int_{\mathbb R^{3}}  \nabla u \nabla v 
+\int_{\mathbb R^{3}}  V(\varepsilon x) u v+\lambda\int_{\mathbb R^{3}}  \phi_{u}
u v+\int_{\mathbb R^{3}} f(u) v = 0, \quad \forall v\in W_{\varepsilon}. 
\end{equation}
Note that under our assumptions, all the integrals appearing in \eqref{eq:defsol} are finite
and  the relation between $\lambda$ and $u$ is given,  for $u\not\equiv0$, by
 $$\lambda=
  -\frac{\|u\|_{W_{\varepsilon}}^{2} +\displaystyle\int_{\mathbb R^{3}} f(u)u}{\displaystyle\int_{\mathbb R^{3}} \phi_{u}u^{2}}
 $$
 and so in particular $\lambda$ is negative.
 
 It is also clear that $(0,\lambda)$, $\lambda\in \mathbb R$, is a solution of \eqref{eq:equazione}, that we call {\sl trivial}.
 Of course we are interested in nontrivial solutions namely solutions with $u\not\equiv0$.

Our next  assumption is  

\begin{enumerate}[label=(C),ref=C]
\item\label{C} $W_{\varepsilon}\hookrightarrow \hookrightarrow L^{p}(\mathbb R^{3})$ for $p\in (2,6)$.
\end{enumerate}
The compact embedding  can be achieved in various ways. For example,
\begin{itemize}
\item
by imposing that $V$ is coercive.
In this case it is known that $W_{\varepsilon}$ has compact embedding into $L^{p}(\mathbb R^{3}),p\in [2,6)$;
\item by imposing that for any $c,r>0$
$$\text{meas}\left\{ x\in B_{r}(y): V(x)\leq c \right\}\to 0 \quad \text{as } \ |y|\to+\infty.$$
 Hereafter
$B_{r}(y)$ is the ball in $\mathbb R^{3}$
 with radius $r>0$ centred in $y$.
Also in this case
the embedding is compact  into $L^{p}(\mathbb R^{3}),p\in [2,6)$,
see \cite[pag. 553]{BPW};
\item
by imposing that $V$ is radial. In this case the natural setting to work with is the radial framework,
namely  the subspace of radial functions in $W_{\varepsilon}$
(if $u$ is radial, also $\phi_{u}$ is) which has compact embedding into $L^{p}(\mathbb R^{3}), p\in(2,6)$. 
This setting is justified by the Palais' Principle of Symmetric 
Criticality
and then the solutions found will satisfy \eqref{eq:defsol} even when tested on nonradial functions of $W_{\varepsilon}$.
Then, if $V$ is radial, all the solutions $u$ found in the theorems below 
are radial too.
\end{itemize}

\medskip

In the following we will simply speak of ``negative, one sign or sign-changing solutions''
to say that $u$ is negative, one sign  or sign-changing.

The solutions $(u,\lambda)$ of \eqref{eq:equazione} will be found as critical points 
of a $C^{1}$  energy functional $I_{\varepsilon}$ restricted to the surface energy
(known in physics also as Fermi surface)
$$\left\{u\in W_{\varepsilon}: \int_{\mathbb R^{3}} \phi_{u} u^{2} = 1\right\}$$
and then $\lambda$ will be the associated Lagrange multiplier.
In this context,
using a standard terminology, we mean by a {\sl ground state solution} 
a solution $u$  whose energy $I_{\varepsilon}(u)$ is minimal
(on the constraint) among all the solutions.

\medskip

The results proved here are of two type, depending essentially if \eqref{f_{1}}
or \eqref{f_{1}}' is assumed.

We start with  the assumption \eqref{f_{1}}\,\hspace{-0,1cm}'.
In this case infinitely many solutions with divergent energy are found and 
they are possible sing-changing.

\begin{theorem}\label{th:signchanging}
Assume \eqref{f_{1}}\,\hspace{-0,1cm}', 
\eqref{f_{2}}, \eqref{f_{3}}, \eqref{V} and \eqref{C}. Then, for any $\varepsilon>0$,
 problem \eqref{eq:equazione}
possesses infinitely many  solutions $( u_{n}, \lambda_{n})$ with
\begin{eqnarray*}
&&\|u_{n}\|_{W_{\varepsilon}} \to +\infty, \qquad \frac{1}{2}\|u_{n}\|_{W_{\varepsilon}}^{2}+\int_{\mathbb R^{3}}F(u_{n})\to +\infty \\
&&
\qquad \lambda_{n} = -\left(\|u_{n}\|_{W_{\varepsilon}}^{2} +\int_{\mathbb R^{3}} f(u_{n})u_{n}\right)\to-\infty.
\end{eqnarray*} 
The ground state solutions can be assumed of one sign.
\end{theorem}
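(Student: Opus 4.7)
The plan is to find the solutions as critical points of the even $C^{1}$ functional
$$I_{\varepsilon}(u)=\frac{1}{2}\|u\|_{W_{\varepsilon}}^{2}+\int_{\mathbb R^{3}}F(u)$$
constrained to the symmetric Fermi surface $\mathcal M_{\varepsilon}:=\{u\in W_{\varepsilon}:G(u)=1\}$ with $G(u):=\int_{\mathbb R^{3}}\phi_{u}u^{2}$, and to apply a $\mathbb Z_{2}$-equivariant Ljusternik--Schnirelmann scheme. Under the odd alternative $(f_{1})'$, $F$ is even and nonnegative so $I_{\varepsilon}\ge 0$, while $G$ is a continuous even quartic functional with $G'(u)[v]=4\int_{\mathbb R^{3}}\phi_{u}uv$, nonzero on $W_{\varepsilon}\setminus\{0\}$; hence $\mathcal M_{\varepsilon}$ is a $C^{1}$ closed symmetric Hilbert submanifold of codimension one, and the scaling $G(tu)=t^{4}G(u)$ makes every nontrivial ray meet $\mathcal M_{\varepsilon}$ at exactly one point. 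A constrained critical point produces a Lagrange multiplier $\mu$ with $I'_{\varepsilon}(u)=\mu G'(u)$, namely a genuine solution $(u,\lambda)$ of \eqref{eq:equazione} with $\lambda=-4\mu$.

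The first analytic step is $(PS)_{c}$ for $I_{\varepsilon}|_{\mathcal M_{\varepsilon}}$ at every $c\in\mathbb R$. Given such a sequence $(u_{n})$, the inequality $\|u_{n}\|_{W_{\varepsilon}}^{2}\le 2I_{\varepsilon}(u_{n})$ provides boundedness, and testing the perturbed Euler--Lagrange equation against $u_{n}$ (using $f(u_{n})u_{n}\ge 0$) bounds the multipliers $\mu_{n}$. Assumption \eqref{C} then gives $u_{n}\to u$ in $L^{p}(\mathbb R^{3})$ for $p\in(2,6)$; combining this with the continuity of $u\mapsto\phi_{u}$ proved in \cite{GP} yields $G(u)=1$ and $\int\phi_{u_{n}}u_{n}(u_{n}-u)\to 0$, while \eqref{f_{2}}--\eqref{f_{3}} make $\int f(u_{n})(u_{n}-u)\to 0$. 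Inserting $v=u_{n}-u$ in the perturbed equation then collapses everything to $\|u_{n}\|_{W_{\varepsilon}}^{2}\to\|u\|_{W_{\varepsilon}}^{2}$, hence strong convergence. In particular every critical set $\mathcal K_{c}$ is compact.

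For the multiplicity let $\gamma$ denote the Krasnoselskii genus and set
$$\Sigma_{n}:=\{A\subset\mathcal M_{\varepsilon}:A\text{ closed, symmetric, }\gamma(A)\ge n\},\qquad c_{n}:=\inf_{A\in\Sigma_{n}}\sup_{u\in A}I_{\varepsilon}(u).$$
To see $\Sigma_{n}\neq\emptyset$ one picks any $n$-dimensional subspace $E_{n}\subset W_{\varepsilon}$: the odd homeomorphism $u\mapsto G(u)^{-1/4}u$ sends the unit sphere of $E_{n}$ onto $E_{n}\cap\mathcal M_{\varepsilon}$, so $\gamma(E_{n}\cap\mathcal M_{\varepsilon})=n$. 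The standard equivariant deformation lemma on $C^{1}$ manifolds (Rabinowitz) then produces critical values $c_{n}\le c_{n+1}$ with the Ljusternik--Schnirelmann multiplicity property: $c_{n}=\dots=c_{n+k}=c^{*}$ implies $\gamma(\mathcal K_{c^{*}})\ge k+1$. Since $\mathcal K_{c^{*}}$ is compact, symmetric and disjoint from $0$, its genus is finite, so $(c_{n})$ cannot stay bounded; thus $c_{n}\to+\infty$. The growth estimate $I_{\varepsilon}(u_{n})\le C(\|u_{n}\|_{W_{\varepsilon}}^{2}+\|u_{n}\|_{W_{\varepsilon}}^{q})$ coming from \eqref{f_{2}}--\eqref{f_{3}} then forces $\|u_{n}\|_{W_{\varepsilon}}\to+\infty$, while the explicit formula $\lambda_{n}=-\|u_{n}\|_{W_{\varepsilon}}^{2}-\int f(u_{n})u_{n}$ valid on $\mathcal M_{\varepsilon}$ yields $\lambda_{n}\to-\infty$.

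For the one-sign ground state, one minimises $I_{\varepsilon}$ on $\mathcal M_{\varepsilon}$: a minimising sequence is $(PS)$ by Ekeland and hence converges by the preceding step to some $u$; since $|\nabla|u||=|\nabla u|$ a.e.\ and both $F$ and $G$ depend only on $u^{2}$, $|u|$ is again a minimiser, thus a constrained critical point satisfying the same equation with $|u|\ge 0$, and the strong maximum principle forces $|u|>0$. The main obstacle in the whole programme is the $(PS)$ step, that is, extracting strong convergence in the presence of the nonlocal quartic term: the interplay between the regularity of $u\mapsto\phi_{u}$ established in \cite{GP} and the compactness provided by \eqref{C} is precisely what rules out loss of mass at infinity for this nonlocal problem.
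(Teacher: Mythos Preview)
Your proof is correct and follows the same strategy as the paper: establish the Palais--Smale condition for $I_{\varepsilon}$ on $\mathcal M_{\varepsilon}$ (this is the paper's Lemma~\ref{lem:general}, proved exactly as you outline), invoke the Krasnoselskii genus / Ljusternik--Schnirelmann machinery on the symmetric manifold $\mathcal M_{\varepsilon}$ (the paper uses Corollary~\ref{lem:genus} and then appeals to ``the abstract Theory'') to obtain critical levels $c_{n}\to+\infty$, and finally read off $\|u_{n}\|_{W_{\varepsilon}}\to+\infty$ and $\lambda_{n}\to-\infty$ from the growth of $F$ and the positivity of $f(t)t$; the one-sign ground state comes in both cases from $I_{\varepsilon}(\pm|u|)=I_{\varepsilon}(u)$. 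The only point worth tightening is your one-line justification of $c_{n}\to+\infty$: finiteness of $\gamma(\mathcal K_{c^{*}})$ for a single level does not by itself rule out a bounded strictly increasing sequence of $c_{n}$'s---one needs the full equivariant deformation argument (e.g.\ \cite{Sz}) showing that bounded sublevels of $I_{\varepsilon}$ on $\mathcal M_{\varepsilon}$ have finite genus, a step the paper likewise delegates to the abstract theory.
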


The next three  theorems deal with the existence of  solutions $(u,\lambda)$
under assumption \eqref{f_{1}}. 

We state explicitly a first result on the existence of ground state.
\begin{theorem}\label{th:negativo}
Assume \eqref{f_{1}}-\eqref{f_{3}}, \eqref{V} and \eqref{C}. Then, for any $\varepsilon>0$,
 problem \eqref{eq:equazione} admits a ground state solution which is negative.
\end{theorem}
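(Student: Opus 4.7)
The plan is to realize a negative ground state as a constrained minimizer on the Fermi surface. Introduce the $C^{1}$ energy $I_{\varepsilon}(u)=\tfrac12\|u\|_{W_{\varepsilon}}^{2}+\int_{\mathbb R^{3}}F(u)$ and the constraint $G(u)=\int_{\mathbb R^{3}}\phi_{u}u^{2}$, both on $W_{\varepsilon}$. Since $G'(u)\cdot u=4G(u)=4\neq 0$ on $\mathcal M_{\varepsilon}:=\{u\in W_{\varepsilon}:G(u)=1\}$, the Fermi surface is a $C^{1}$ Hilbert submanifold, and any critical point of $I_{\varepsilon}|_{\mathcal M_{\varepsilon}}$ gives a solution $(u,\lambda)$ of \eqref{eq:equazione} with $\lambda=-4\mu$, where $\mu$ is the Lagrange multiplier. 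Using the elementary bound $(1-e^{-r})/r\leq 1$ for $r\geq 0$ inside the double integral defining $G$, one gets $G(u)\leq|u|_{2}^{4}$; hence on $\mathcal M_{\varepsilon}$ one has $|u|_{2}\geq 1$ and, by \eqref{V}, $\|u\|_{W_{\varepsilon}}^{2}\geq V_{0}$. In particular $c_{\varepsilon}:=\inf_{\mathcal M_{\varepsilon}}I_{\varepsilon}\geq V_{0}/2>0$, and exhibiting a minimizer will automatically yield a ground state in the sense of the introduction.

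Next, I would reduce to nonpositive competitors. Since $G$ depends only on $u^{2}$ and \eqref{f_{1}} together with \eqref{f_{2}}--\eqref{f_{3}} imply $F\geq 0$ and $F\equiv 0$ on $(-\infty,0]$, one obtains $I_{\varepsilon}(-|u|)\leq I_{\varepsilon}(u)$ for every $u\in\mathcal M_{\varepsilon}$. It therefore suffices to minimize $\tfrac12\|\cdot\|_{W_{\varepsilon}}^{2}$ on $\mathcal M_{\varepsilon}^{-}:=\{u\in\mathcal M_{\varepsilon}:u\leq 0\text{ a.e.}\}$. Fix a minimizing sequence $(u_{n})\subset\mathcal M_{\varepsilon}^{-}$; it is bounded in $W_{\varepsilon}$, so up to a subsequence $u_{n}\rightharpoonup u$ weakly in $W_{\varepsilon}$ and a.e., which forces $u\leq 0$, while \eqref{C} gives $u_{n}\to u$ strongly in $L^{p}(\mathbb R^{3})$ for every $p\in(2,6)$. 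Combining Hardy--Littlewood--Sobolev-type bounds on the Bopp--Podolsky potential $\phi_{u_{n}}$ (whose norms are controlled by $|u_{n}|_{s}^{2}$ for some $s\in(12/5\text{-range})$) with the strong $L^{12/5}$-convergence of $u_{n}$, one obtains $G(u_{n})\to G(u)=1$, so $u\in\mathcal M_{\varepsilon}^{-}$. Weak lower semicontinuity of $\|\cdot\|_{W_{\varepsilon}}$ then gives $\tfrac12\|u\|_{W_{\varepsilon}}^{2}\leq\liminf\tfrac12\|u_{n}\|_{W_{\varepsilon}}^{2}=c_{\varepsilon}$, so $u$ attains $c_{\varepsilon}$ and is in particular nontrivial.

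From the Lagrange multiplier rule, $I'_{\varepsilon}(u)=\mu G'(u)$ for some $\mu\in\mathbb R$. Since $u\leq 0$ gives $f(u)\equiv 0$, testing with $u$ yields $4\mu=\|u\|_{W_{\varepsilon}}^{2}$, hence $\lambda=-4\mu=-\|u\|_{W_{\varepsilon}}^{2}<0$, in agreement with the formula stated in the introduction. Thus $(u,\lambda)$ solves \eqref{eq:equazione}, and $u$ satisfies $-\Delta u+(V(\varepsilon x)+\lambda\phi_{u})u=0$ pointwise after elliptic regularization (note $\phi_{u}\in L^{\infty}$ since $\mathcal D\hookrightarrow L^{\infty}$, so the zeroth-order coefficient is in $L^{\infty}_{loc}$). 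Applying the strong maximum principle to $-u\geq 0$, $-u\not\equiv 0$, one concludes that $u<0$ on all of $\mathbb R^{3}$. The main obstacle is the compactness step of the second paragraph: the nonlocal functional $G$ is not weakly continuous on $W_{\varepsilon}$, so one must carefully combine the compact embedding \eqref{C} with the smoothing effect of the Yukawa-type kernel $(1-e^{-|x-y|})/|x-y|$ to justify $G(u_{n})\to G(u)$; once that continuity is available, the argument is standard constrained-minimization plus maximum principle.
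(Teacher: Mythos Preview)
Your proof is correct and follows the paper's approach: minimize $I_{\varepsilon}$ on the Fermi surface $\mathcal M_{\varepsilon}$ and use $I_{\varepsilon}(-|u|)\le I_{\varepsilon}(u)$ to obtain a nonpositive ground state. The paper compresses the existence step into a one-line reference to the Palais--Smale condition (Lemma~\ref{lem:general}), whereas you spell out the direct method via weak lower semicontinuity together with the constraint continuity from Proposition~\ref{prop:weaklyclosed}; your additional strong-maximum-principle step upgrading $u\le 0$ to $u<0$ goes slightly beyond what the paper actually proves.
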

To get multiplicity results it will be important the smallness of $\varepsilon$
and the  topological properties 
of the set of minima of the potential $V$, when achieved.
 Then our next assumption stronger then \eqref{V}:

\medskip

 \begin{enumerate}[label=(V\arabic*),ref=V\arabic*,start=1]
\item\label{V1} $V:\mathbb R^{3}\to \mathbb R$ is continuous 
and satisfies 
$$
0<\min_{x\in \mathbb R^3}V(x)=:V_{0}, \quad \text{with }\ 
M:=\Big\{x\in\mathbb R^{3}:V(x)=V_{0}\Big\} \ni 0.
$$
\end{enumerate}

\medskip

Recall that $\cat_{Y}(X)$ denotes the Ljusternick-Schnirelmann category
of the set $X$ in $Y$; that is, it is the least number of closed and contractible sets
in $Y$ which cover $X$.  If $X=Y$ we just write $\cat(X)$.
%

\begin{theorem}\label{th:LST}
Assume \eqref{f_{1}}-\eqref{f_{3}}, \eqref{V1} and \eqref{C}.
Then, there exists $\varepsilon^{*}>0$ 
such that for every $\varepsilon\in (0,\varepsilon^{*}]$
problem \eqref{eq:equazione} has at least
 $\cat(M)$ negative solutions with low energy.

If moreover if $M$ is bounded and $\cat (M)>1$
there is another negative solution with high energy.

\end{theorem}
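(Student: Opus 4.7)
The plan is to follow the Benci--Cerami semiclassical multiplicity scheme adapted to the Fermi-surface constraint $\mathcal{F}_\varepsilon=\{u\in W_\varepsilon:\int\phi_u u^2=1\}$. First I would study the autonomous limit problem, i.e.\ \eqref{eq:equazione} with $V(\varepsilon x)$ replaced by the constant $V_0$, on the corresponding constraint $\mathcal{F}_0$. Using Theorem \ref{th:negativo} (applied to the constant potential $V_0$) one obtains a negative ground state $w$ with energy $m_{V_0}:=\inf_{\mathcal{F}_0}I_0$. Because $V(\varepsilon x)\ge V_0$, an inequality $c_\varepsilon\ge m_{V_0}$ is automatic, where $c_\varepsilon=\inf_{\mathcal{F}_\varepsilon}I_\varepsilon$. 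To prove the matching $\limsup_{\varepsilon\to 0}c_\varepsilon\le m_{V_0}$, I would plug a test function obtained by centering $w$ at $0\in M$ and cutting off, using continuity of $V$ and $V(0)=V_0$.

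The multiplicity comes from building two maps. For $\varepsilon$ small I would define $\Phi_\varepsilon:M\to\mathcal{F}_\varepsilon$ by
$$\Phi_\varepsilon(y)(x)=t_\varepsilon(y)\,\eta(|\varepsilon x-y|)\,w(x-y/\varepsilon),$$
with a cutoff $\eta$ and a scalar $t_\varepsilon(y)$ chosen so that $\Phi_\varepsilon(y)\in\mathcal{F}_\varepsilon$, and verify that $I_\varepsilon(\Phi_\varepsilon(y))\to m_{V_0}$ uniformly in $y\in M$ as $\varepsilon\to 0$. In the other direction I would introduce a barycenter
$$\beta_\varepsilon(u)=\frac{\int_{\mathbb R^3}\chi(\varepsilon x)\,|u(x)|^2\,dx}{\int_{\mathbb R^3}|u(x)|^2\,dx},$$
with $\chi$ a truncation of the identity in a neighborhood of $M$, and prove the concentration lemma: if $u_n\in\mathcal{F}_{\varepsilon_n}$ satisfies $I_{\varepsilon_n}(u_n)\to m_{V_0}$, then, up to translations, $u_n$ concentrates at a point of $M$; this is where assumption \eqref{C} and the autonomous ground-state analysis feed in to control Lions-type vanishing/splitting on the nonlocal term $\int\phi_u u^2$. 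From the concentration lemma it follows that for some sublevel $\mathcal{F}_\varepsilon^{m_{V_0}+h(\varepsilon)}$ the composition $\beta_\varepsilon\circ\Phi_\varepsilon$ is homotopic to the inclusion $M\hookrightarrow M_\delta$. Standard category theory then yields
$$\cat(\mathcal{F}_\varepsilon^{m_{V_0}+h(\varepsilon)})\ge\cat_{M_\delta}(M)=\cat(M),$$
and a Ljusternik--Schnirelmann argument on the $C^1$ constraint $\mathcal{F}_\varepsilon$ delivers $\cat(M)$ critical points at levels below $m_{V_0}+h(\varepsilon)$. To force them to be \emph{negative} I would work, as prepared in the proof of Theorem \ref{th:negativo}, with the truncated functional using \eqref{f_{1}}, whose critical points are a posteriori shown to be $\le 0$ via a maximum principle argument on the equation $-\Delta u+V(\varepsilon x)u+\lambda\phi_u u=0$ on $\{u\ge 0\}$ (here $\lambda<0$ and $\phi_u\ge 0$).

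For the extra high-energy solution, assuming $M$ bounded with $\cat(M)>1$, I would exploit a topological obstruction: the set $\Phi_\varepsilon(M)\subset\mathcal{F}_\varepsilon$ is contractible in the sublevel $\mathcal{F}_\varepsilon^{d_\varepsilon}$ for some $d_\varepsilon>m_{V_0}+h(\varepsilon)$ (because $\Phi_\varepsilon(M)$ bounded allows radial contraction inside $\mathcal{F}_\varepsilon$ through a global sublevel set), while $\Phi_\varepsilon(M)$ is \emph{not} contractible inside $\mathcal{F}_\varepsilon^{m_{V_0}+h(\varepsilon)}$ by the $\beta_\varepsilon\circ\Phi_\varepsilon\simeq\mathrm{id}_M$ homotopy together with $\cat(M)>1$. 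A standard min-max between these two sublevels produces a critical value strictly greater than $m_{V_0}+h(\varepsilon)$, yielding the additional solution. The main obstacle, as usual in this program, is the concentration/compactness step: since the constraint involves the nonlocal quantity $\int\phi_u u^2$, one must track how the Bopp--Podolsky potential $\phi_{u_n}$ behaves under translations $u_n(\cdot-y_n)$ with $\varepsilon_n y_n$ convergent, and rule out splitting by comparing with $m_{V_0}$ and with limit problems at other points of $\overline{\{V(\varepsilon x)\}}$, using the strict inequality $V(x)>V_0$ off $M$.
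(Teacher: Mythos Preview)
Your overall scheme is the same Benci--Cerami photography method the paper uses, with the same maps $\Phi_\varepsilon$ and $\beta_\varepsilon$ and the same cone argument for the extra solution. Two points deserve comment.

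First, you are anticipating a difficulty that assumption \eqref{C} removes entirely. You describe the ``main obstacle'' as a Lions-type concentration--compactness analysis with translations, splitting, and comparison with limit problems at other points using $V(x)>V_0$ off $M$. None of this is needed: under \eqref{C} the embedding $W_\varepsilon\hookrightarrow\hookrightarrow L^p$ is compact, so bounded sequences converge in $L^p$ without any translation. The paper's corresponding step (Lemma~\ref{lem:convground}) is essentially two lines: if $u_n\in\mathcal M_{\varepsilon_n}$ with $I_{\varepsilon_n}(u_n)\to E_{V_0}(\mathfrak u)$, then since $\mathcal M_{\varepsilon_n}\subset\mathcal M_{V_0}$ and $V\ge V_0$ one has $E_{V_0}(u_n)\to E_{V_0}(\mathfrak u)$, i.e.\ $\{u_n\}$ is a minimizing sequence for $E_{V_0}$ on $\mathcal M_{V_0}$, and such sequences converge by the analysis of the autonomous problem. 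The paper explicitly remarks that removing \eqref{C} and carrying out a genuine Lions argument is left open.

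Second, your route to negativity via a maximum principle is problematic. Testing the equation with $u^+$ gives
\[
\|u^+\|_{W_\varepsilon}^2 + \lambda\int_{\mathbb R^3}\phi_u (u^+)^2 + \int_{\mathbb R^3} f(u)u^+ = 0,
\]
and since $\lambda<0$ and $\phi_u\ge 0$ the middle term is $\le 0$, so you cannot conclude $u^+=0$. The paper avoids this: it takes the autonomous ground state $\mathfrak u$ to be \emph{negative} (Theorem~\ref{th:gs-mu}), so $\Phi_\varepsilon(y)\in N$ by construction, and runs the photography argument inside the sublevel intersected with the cone $N$ of nonpositive functions; the low-energy critical points are then negative because, by Lemma~\ref{lem:convground}, everything in that sublevel is close to the negative function $\mathfrak u$.
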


The meaning of ``low energy'' or ``high energy'' will be clear during 
the proof.

A second multplicity result of negative solutions is obtained by making use of the Morse Theory.
In this case 
we introduce the next set of assumptions stronger then the previous one on $f$:
\medskip

\begin{enumerate}[label=(f\arabic*),ref=f\arabic*,start=4]
\item\label{fC1}  $f$ is  $C^{1}$, $f(u)\geq0$ for $u\geq0$   and $f(u)=0$ for $u\leq0$; \medskip
\item\label{f_{4}} $ \exists \,q\in  (2, 6)$ such that $\lim_{u\rightarrow \infty}{f'(u)}/{u^{q-2}}=0$; \medskip
\item\label{f_{5}} $ \lim_{u\rightarrow 0}f'(u)=0$.
\end{enumerate}

\medskip

In the following  $\mathcal P_{t}(M)$ is the Poincar\'e polynomial of $M$.

\begin{theorem}\label{th:MT}
Assume \eqref{fC1}-\eqref{f_{5}}, \eqref{V1} and \eqref{C}. Then there exists $\varepsilon^{*}>0$ such that for every $\varepsilon\in (0,\varepsilon^{*}]$ problem \eqref{eq:equazione}
 has at least $2\mathcal P_{1}(M)-1$ negative 
 solutions, possibly counted with their multiplicity.
\end{theorem}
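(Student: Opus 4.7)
The plan is to refine the Ljusternick--Schnirelmann argument of Theorem \ref{th:LST} by means of Morse theory, in the spirit of Benci--Cerami. Under the stronger regularity \eqref{fC1}--\eqref{f_{5}}, the constrained functional $I_{\varepsilon}$ becomes of class $C^{2}$ on the Hilbert manifold
$$\mathcal{S}_{\varepsilon}:=\Bigl\{u\in W_{\varepsilon}:\int_{\mathbb R^{3}}\phi_{u}u^{2}=1\Bigr\},$$
which is smooth, complete and contractible (the constraint map is smooth, strictly positive on $W_{\varepsilon}\setminus\{0\}$ and $4$-homogeneous). My first step is to verify the Palais--Smale condition for $I_{\varepsilon}|_{\mathcal{S}_{\varepsilon}}$ at every level, combining the estimates on $\phi_{u}$ from \cite{GP} with the compact embedding \eqref{C}.

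I then recycle the photography map $\Phi_{\varepsilon}:M\to\mathcal{S}_{\varepsilon}^{c_{\varepsilon}+h(\varepsilon)}$ and the barycenter map $\beta_{\varepsilon}:\mathcal{S}_{\varepsilon}^{c_{\varepsilon}+h(\varepsilon)}\to M_{\delta}$ built in the proof of Theorem \ref{th:LST}, where $c_{\varepsilon}$ is the ground state level and $h(\varepsilon)\to 0^{+}$ as $\varepsilon\to 0^{+}$. The homotopy $\beta_{\varepsilon}\circ\Phi_{\varepsilon}\simeq \mathrm{id}_{M}$ proved there, together with the fact that $M\hookrightarrow M_{\delta}$ is a homotopy equivalence for $\delta$ small, yields
$$\mathcal{P}_{t}\bigl(\mathcal{S}_{\varepsilon}^{c_{\varepsilon}+h(\varepsilon)}\bigr)=\mathcal{P}_{t}(M)+\mathcal{Z}_{1}(t),\qquad \mathcal{Z}_{1}\in\mathbb Z_{\ge 0}[t].$$
Assuming all critical points of $I_{\varepsilon}|_{\mathcal{S}_{\varepsilon}}$ are nondegenerate (otherwise a standard perturbation of $I_{\varepsilon}$ reduces to this case at the price of counting multiplicities), the Morse equality applied to the sublevel $\mathcal{S}_{\varepsilon}^{c_{\varepsilon}+h(\varepsilon)}$ produces at least $\mathcal{P}_{1}(M)$ critical points below the level $c_{\varepsilon}+h(\varepsilon)$. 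For the high-energy critical points I apply the relative Morse equality to the pair $(\mathcal{S}_{\varepsilon},\mathcal{S}_{\varepsilon}^{c_{\varepsilon}+h(\varepsilon)})$: contractibility of $\mathcal{S}_{\varepsilon}$ and the long exact sequence give
$$\mathcal{P}_{t}\bigl(\mathcal{S}_{\varepsilon},\mathcal{S}_{\varepsilon}^{c_{\varepsilon}+h(\varepsilon)}\bigr)=t\bigl(\mathcal{P}_{t}(M)-1\bigr)+t\mathcal{Z}_{1}(t),$$
providing at least $\mathcal{P}_{1}(M)-1$ further critical points and hence a grand total of $2\mathcal{P}_{1}(M)-1$, counted with multiplicity. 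The negativity of all solutions found is obtained exactly as in Theorem \ref{th:negativo}, by testing the equation with $u^{+}$ and using \eqref{fC1} together with the sign of the Lagrange multiplier $\lambda$.

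The hardest step I anticipate is the verification of Palais--Smale at arbitrary levels on $\mathcal{S}_{\varepsilon}$: the nonlocal term $\lambda\phi_{u}u$ combined with the unbounded domain $\mathbb R^{3}$ forces a careful concentration--compactness analysis, and one must rule out the escape of mass along the constraint uniformly with respect to $\varepsilon$. A secondary delicate point is to ensure that the homotopy $\beta_{\varepsilon}\circ\Phi_{\varepsilon}\simeq\mathrm{id}_{M}$ survives the truncation and translation operations in the presence of the nonlocal term $\phi_{u}$, which is handled through the decay estimates of $\phi_{u}$ available when $u$ is a concentrated ground state of the limit problem at infinity.
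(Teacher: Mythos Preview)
Your overall strategy coincides with the paper's: Benci--Cerami Morse theory built on the photography map $\Phi_{\varepsilon}$ and the barycenter $\beta_{\varepsilon}$, feeding the identity $\mathcal P_{t}(\text{low sublevel})=\mathcal P_{t}(M)+\mathcal Z(t)$ into the Morse relations to count critical points below and above a chosen level. The paper implements this with a small technical twist: instead of working directly on the constraint $\mathcal S_{\varepsilon}=\mathcal M_{\varepsilon}$, it passes through pairs of sublevels of the free functional in $W_{\varepsilon}$ via the Benci--Cerami lemma $\mathcal P_{t}(I_{\varepsilon}^{a},I_{\varepsilon}^{r})=t\,\mathcal P_{t}(\mathcal M_{\varepsilon}^{a})$, which shifts all Poincar\'e polynomials by a factor of $t$. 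It also replaces $c_{\varepsilon}+h(\varepsilon)$ by a regular value $b_{\varepsilon}^{*}>E_{V_{0}}(\mathfrak u)$, a point you should address. Modulo this, the two routes are equivalent and give the same count $2\mathcal P_{1}(M)-1$.

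Two corrections are in order. First, Palais--Smale is \emph{not} the hard step here: under \eqref{C} it is already established in Lemma~\ref{lem:general} (coercivity gives boundedness, \eqref{C} gives strong $L^{p}$ convergence, the multipliers are bounded), and no concentration--compactness is needed. Second, and more seriously, your proposed negativity argument fails. Testing the equation against $u^{+}$ yields
\[
\|u^{+}\|_{W_{\varepsilon}}^{2}+\int_{\mathbb R^{3}} f(u^{+})u^{+}\;=\;-\lambda\int_{\mathbb R^{3}}\phi_{u}(u^{+})^{2},
\]
and since $\lambda<0$ the right-hand side is \emph{nonnegative}, so no contradiction arises and you cannot conclude $u^{+}\equiv 0$. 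The paper does not argue this way; it uses instead that $\Phi_{\varepsilon}$ lands in the cone $N=\{u\le 0\}$ and that $I_{\varepsilon}(-|u|)\le I_{\varepsilon}(u)$ under \eqref{fC1}, so that the whole minimax/Morse machinery can be run inside $\mathcal M_{\varepsilon}\cap N$.
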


\medskip

It is clear that in general, we get a better result using the Morse Theory.
For example, 
\begin{itemize}
\item 
if $M$ is obtained by a contractible domain cutting off $k$
 disjoint contractible sets, it is 
 $$\cat(M) = 2, \quad \text{and} \quad \mathcal P_{1}(M) = 1 + k;$$
\item 
if $M$ is obtained as a  union of $l$ spheres $\{S_{i}\}_{i=1,\ldots l}$ and $m$ anuli $\{A_{j}\}_{j=1,\ldots ,m}$ 
all pairwise disjoint, 
 then, since $\cat (S_{i}) = \cat(A_{j}) = \mathcal P_{1}(S_{i}) = \mathcal P_{1}(A_{j})=2$ we get
$$\cat(M)=2(l+m) \quad \text{and} \quad 2\mathcal P_{1}(M)-1=2\cdot 2(l+m) -1. $$
\end{itemize}

\medskip

As we said above, our approach is variational. In particular, to prove  Theorem \ref{th:LST}
 and Theorem \ref{th:MT} a fundamental role is played by the autonomous problem
\begin{equation}\label{eq:autonomo}
-\Delta u + V_{0} u +\lambda \phi_{u} u +f(u) = 0\quad \text{ in }\mathbb R^{3},
\end{equation}
and especially by  its ground state solution $\mathfrak u$,
that is, the minimum of the associated energy functional (denoted with $E_{V_{0}}$) 
on the functions $u\in H^{1}(\mathbb R^{3})$ satisfying
$$ \int_{\mathbb R^{3}} \phi_{u} u^{2} = 1.$$
{\sl En passant} we then prove existence and multiplicity  results for \eqref{eq:autonomo}, see Theorem \ref{th:gs-mu-infty}
and Theorem \ref{th:gs-mu} in Section \ref{sec:autonomous}.

\begin{remark}\label{rem:c}
	Observe finally that all the solutions we find satisfy 
	$$\int_{\mathbb R^{3}} \phi_{u} u^{2}=1$$
	but indeed  the results  are  evenly true if we consider solutions with
$$\int_{\mathbb R^{3}} \phi_{u} u^{2}=c, \quad c>0.$$	
\end{remark}

\begin{remark}
Our theorems  are true also if the potential $f$ depends explicitly
	on $x\in \mathbb R^3$. In this case the limits in \eqref{f_{2}}, \eqref{f_{3}}, \eqref{f_{4}},
	\eqref{f_{5}} have to be uniform in $x$. In this case, some degeneracy in $x$ is also permitted, in the sense that $f$
	can be zero for $x$ in some region $\mathcal R$ of $\mathbb R^{3}$.
	Physically speaking it means that the potential $f$ is acting only on $\mathbb R^3\setminus\mathcal R$.
	\end{remark}

\medskip

Let us briefly comment now our assumptions.

First of all observe that, under  \eqref{f_{1}}
or \eqref{f_{1}}', if $\lambda\geq0$ is given {\sl a priori}
we do not have any nontrivial solution.
Indeed if $u$ is a solution of \eqref{eq:equazione}, just multiplying the equation by the same
$u$ and integrating, we reach $u\equiv 0$.
Moreover 
the positivity of $f$ in case \eqref{f_{1}} will be important in proving that the ground state
solution of the autonomous problem \eqref{eq:autonomo}  is radial. Note that the constraint on which we will restrict $E_{V_{0}}$
is not closed under the radial decreasing  rearrangements.

Assumption \eqref{f_{2}} and \eqref{f_{3}}
are standard when using variational methods: they will allow to define a $C^{1}$ energy
functional related to the problem. Analogously, the stronger assumptions \eqref{fC1}-\eqref{f_{5}}
will be useful to deal with the second derivative of the functional and implementing the Morse Theory.

In particular our assumptions on $f$ cover the case $f\equiv0$.

As we have seen,  assumption \eqref{V} is useful to define the right functional
spaces and \eqref{V1} will be useful to deal with the multiplicity result via the category of Ljusternick and Schnirelmann.

Finally assumption \eqref{C} will be important in order to recover the compactness condition
of Palais and Smale, recalled in Section \ref{sec:preliminari}.

 \medskip

To prove the  result we will be mainly inspired by the classical papers \cite{BC1,BCP,BC}
where a general method  to obtain multiplicity of solutions depending on the topology of the ``domain''
has been developed.
Later on, many other problems (involving quasilinear or fractional  equations, among many others)
 have been treated with the same ideas: 
we just recall here  \cite{Alves,Alvesgio1,claudianorserginhorodrigo,CGU,CLV, CV2, CV3,
	FPS, GhiMic, Sicilia,Visetti}.
 However there are evident differences with our paper.

In these last  cited papers the functional is unbounded below on the space
and the constraint is the well known Nehari manifold. The advantage of working
on the Nehari manifold is that the functional becomes bounded below.
Moreover this constraint is  introduced as the set of zeroes of a function which involves the same energy functional
(actually its derivative) and is a natural constraint. In this way suitable conditions on the nonlinearity $f$ (e.g. Ambrosetti-Rabinowitz type condition) 
permits to obtain the boundedness of the Palais-Smale sequence and then compactness results. 
We recall that in this cases, an additional assumption on $V$ is set at infinity:
$$V_{0}< \liminf_{|x|\to +\infty}V(x)=:V_{\infty}\leq +\infty$$
which is  useful to obtain compactness.
Moreover when dealing with the constraint of the Nehari manifold a great help is given by the fact
that there is a minimax characterization  of the projection of any nonzero element 
on the constraint.

In our case the functional is positive  
on the whole space and the  constraint has nothing to do with the functional.
In particular it is always possible to project nonzero functions on the constraint
and we do not require  assumptions of $f(u)/u$.
Moreover,  although we have again the uniqueness of the projection, the minimax characterisation is lost. 
Observe that  we do not need any Ambrosetti-Rabinowitz type condition
and even more, our nonlinearity can vanish somewhere.
For these reasons, although we follow the general strategy
of the cited papers,   many classical proofs do not work and need  to be readjusted.
Another difference from the classical papers, is that our solutions of Theorem
\ref{th:LST} and Theorem \ref{th:MT} are negative.


Then, to the best of our knowledge this is the first paper dealing with the ``photography method''
of Benci, Cerami and Passaseo
with assumptions on the nonlinearity different from the usual ones.

\medskip

We believe that an interesting problem will be the study of multiplicity
of solutions in other cases in which $f$ is negative, as well as,
remove assumption $\eqref{C}$ and address  the problem 
with the approach of Lions by using concentration compactness arguments.

\medskip

The organization of the paper is the following. 

In Section \ref{sec:preliminari},
 we introduce  the related variational setting, the constraint and its fundamental properties,
and we show the compactness property of the functional.

In the brief Section \ref{sec:genus}, we prove
 Theorem \ref{th:signchanging} and Theorem \ref{th:negativo}.

In Section \ref{sec:autonomous} we study the autonomous problem \eqref{eq:autonomo},
with a general positive constant  $\mu$ instead of $V_{0}$. 
We obtain  multiplicity results  of infinitely many solutions if \eqref{f_{1}}' holds
(see Theorem \ref{th:gs-mu-infty}). 
In case \eqref{f_{1}} holds, we found the important 
result concerning the ground state solution (see Theorem \ref{th:gs-mu})
that will be used later on to implement the barycentre machinery.

In the final Section \ref{Bary},  after defining the barycentre maps and its properties,
 we prove Theorem \ref{th:LST} and Theorem \ref{th:MT}.

\medskip

{\bf Notations.}
Here we list few notations that will be used through the paper.
Other will be introduced whenever we need.
\begin{itemize}
\item $|\cdot |_{p}$ is the $L^{p}-$norm;
\item  $H^{1}(\mathbb R^{3})$ is the usual Sobolev space with norm $\|\cdot\|$;
\item \textcolor{red}{given a set of functions $X$, the subset of radial functions is denoted with $X_{r}$};
\item the conjugate exponent of $r$ is denoted with $r'$;
\item  $o_{n}(1)$ denotes a vanishing sequence;
\item  $C,C',\ldots$ stand to denote suitable positive constants whose values may 
also change from line to line.
\end{itemize}


\section{Preliminaries and variational setting}\label{sec:preliminari}


%
%
Let us start with few preliminaries and recalling some well-known facts.

It is standard that from the growth conditions on $f$ and $f'$ given in \eqref{f_{2}}, \eqref{f_{3}},
\eqref{f_{4}} and \eqref{f_{5}}, it follows that for any 
$\delta>0$ there exists $C_{\delta}>0$ such that 
 for every $v,w\in H^{1}(\mathbb R^{3})$,
\begin{equation}\label{eq:limitaf}
\int_{\mathbb R^{3}} |f(u)v| \leq\delta \int_{\mathbb R^{3}} |u v| +C_{\delta}
\int_{\mathbb R^{3}} |u|^{q -1} |v|
\leq \delta |u|_{2} |v|_{2}+C_{\delta} |u|^{q-1}_{q} |v|_{q}.
\end{equation}
and
\begin{equation}\label{eq:limitaf'}
\int_{\mathbb R^{3}} |f'(u)vw| \leq\delta \int_{\mathbb R^{3}} | v w| +C_{\delta}
\int_{\mathbb R^{3}} |u|^{q -2} |v||w|
\leq \delta |v|_{2} |w|_{2}+C_{\delta} |u|^{q-2}_{q} |v|_{q}|w|_{q}.
\end{equation}

For completeness we recall the following properties of $\phi_{u}$ defined in \eqref{eq:phiu}. 
They are contained in \cite[Lemma 3.4 and Lemma 5.1]{GP}.
 \begin{lemma}\label{lem:phi}
	For every $u\in H^{1}(\mathbb R^{3})$ we have:
	\begin{enumerate}[label=(\roman*),ref=\roman*]
		\item\label{propphiii} for every $y\in\mathbb R^3$, $\phi_{u( \cdot+y)} = \phi_{u}( \cdot+y)$; \smallskip
		\item\label{propphiiii} $\phi_{u}\geq0$; \smallskip
		\item \label{propphiiv} for every  $s\in (3,+\infty]$, $\phi_{u}\in   L^{s}(\mathbb R^{3})\cap C_{0}(\mathbb 
		R^{3})$; \smallskip
		\item \label{propphiv} for every $s\in (3/2,+\infty]$, $\nabla \phi_{u} = \nabla \mathcal K * u^{2}\in L^{s}
		(\mathbb R^{3})\cap C_{0}(\mathbb R^{3})$; \smallskip
		\item \label{propphivii} $|\phi_{u}|_{6}\leq C \|u\|^{2}$  for some  constant $C>0$; \smallskip
		\item \label{propphiviii} $\phi_{u}$ is the unique minimizer of the functional
		$$E(\phi) = \frac12 |\nabla \phi|_{2}^{2} +\frac {1}{2} |\Delta\phi|_{2}^{2}- \int_{\mathbb R^{3}}\phi u^{2}, \quad \phi\in \mathcal D.$$
			\end{enumerate}
Moreover if $u$ is radial also $\phi_{u}$ is and if
 $u_n \rightharpoonup u$ in $H_r^1(\mathbb{R}^3)$, then
	\begin{enumerate}[label=(\roman*),ref=\roman*]\setcounter{enumi}{6}
\item\label{propphiiii}   $\phi_{u_{n}} \to \phi_u$  in $\mathcal{D}$; \smallskip
\item\label{propphivv}$\displaystyle\int_{\mathbb R^{3}} \phi_{u_{n}} u_{n}^{2} \to \int_{\mathbb R^{3}} \phi_{u} u^{2}$; \smallskip
\item\label{propphivi} $\displaystyle\int_{\mathbb R^{3}} \phi_{u_{n}} u_{n}v \to \displaystyle\int_{\mathbb R^{3}}\phi_{u} u v$ for any $v\in 
H^{1}(\mathbb R^{3})$.
	\end{enumerate}
 \end{lemma}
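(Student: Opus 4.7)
The plan is to exploit the fact that $\phi_u$ is the convolution $\mathcal{K}*u^2$ where $\mathcal{K}(x):=(1-e^{-|x|})/|x|$ is the fundamental solution of $-\Delta+\Delta^{2}$ on $\mathbb{R}^{3}$. The crucial elementary observations about $\mathcal{K}$ are: it is nonnegative; near the origin $\mathcal{K}(x)\to 1$ so the singularity is removable; and at infinity $\mathcal{K}(x)\sim 1/|x|$, so that $\mathcal{K}\in L^{s}(\mathbb{R}^{3})$ exactly for $s\in(3,+\infty]$ while $|\nabla \mathcal{K}(x)|=O(|x|^{-2})$ gives $\nabla\mathcal{K}\in L^{s}(\mathbb{R}^{3})$ exactly for $s\in(3/2,+\infty]$. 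Assertions (i) and (ii) are then immediate from a change of variables and from $\mathcal{K}\geq 0$. For (iii) and (iv), since $u\in H^{1}(\mathbb{R}^{3})$ one has $u^{2}\in L^{1}\cap L^{3}$, and Young's convolution inequality with the kernel $\mathcal{K}$ (respectively $\nabla\mathcal{K}$) gives the claimed $L^{s}$ memberships; the continuity and vanishing at infinity follow from approximating $\mathcal{K}$ by continuous compactly supported kernels in $L^{s}$-norm.

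For (v), I would not try to apply Young directly (which fails at $s=6$) but use the weak formulation instead. Since $\phi_u$ is the (distributional) solution of $-\Delta\phi+\Delta^{2}\phi=u^{2}$, testing with $\phi_u$ itself and using Hölder and the Sobolev embedding $D^{1,2}\hookrightarrow L^{6}$ yields
\begin{equation*}
|\nabla\phi_{u}|_{2}^{2}+|\Delta\phi_{u}|_{2}^{2}=\int_{\mathbb{R}^{3}}\phi_{u}u^{2}\leq |\phi_{u}|_{6}|u|_{12/5}^{2}\leq C|\nabla\phi_{u}|_{2}\,\|u\|^{2},
\end{equation*}
whence $|\phi_{u}|_{6}\lesssim |\nabla\phi_{u}|_{2}\lesssim\|u\|^{2}$. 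For (vi), the functional $E$ is strictly convex (the quadratic part is the square of the norm of $\mathcal{D}$) and the Sobolev estimate just used shows it is coercive on $\mathcal{D}$. Direct methods then give existence of a unique minimizer, and the Euler--Lagrange equation is precisely $-\Delta\phi+\Delta^{2}\phi=u^{2}$, so the minimizer coincides with $\phi_u$.

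For the convergence assertions under $u_{n}\rightharpoonup u$ in $H^{1}_{r}(\mathbb{R}^{3})$, the starting point is the classical compact embedding $H^{1}_{r}(\mathbb{R}^{3})\hookrightarrow L^{p}(\mathbb{R}^{3})$ for $p\in(2,6)$. In particular $u_n\to u$ in $L^{12/5}$, so $u_n^{2}\to u^{2}$ in $L^{6/5}$. To obtain (vii) I would test the equation for $\phi_{u_n}-\phi_u$ with itself and use Hölder and Sobolev again:
\begin{equation*}
\|\phi_{u_{n}}-\phi_{u}\|_{\mathcal{D}}^{2}=\int_{\mathbb{R}^{3}}(\phi_{u_{n}}-\phi_{u})(u_{n}^{2}-u^{2})\leq C\,\|\phi_{u_{n}}-\phi_{u}\|_{\mathcal{D}}\,|u_{n}^{2}-u^{2}|_{6/5},
\end{equation*}
so $\|\phi_{u_{n}}-\phi_{u}\|_{\mathcal{D}}\to 0$. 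Assertion (viii) follows by splitting $\int\phi_{u_n}u_n^{2}-\int\phi_u u^{2}$ into $\int(\phi_{u_n}-\phi_u)u_n^{2}$ plus $\int \phi_u(u_n^{2}-u^{2})$ and applying Hölder with the strong convergences just obtained; (ix) is analogous, replacing one factor $u_n$ by the fixed test function $v\in H^{1}(\mathbb{R}^{3})$.

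The only place where one must be genuinely careful, and which I expect to be the main obstacle, is the $L^{6}$ bound (v) and its refinement to the strong convergence (vii): naive application of Young on $\mathcal{K}*u^{2}$ does not reach the endpoint $s=6$ for $\phi_u$ nor the endpoint $s=2$ for $\nabla\phi_u$. Going through the variational/PDE characterization of $\phi_u$ bypasses this issue cleanly and simultaneously sets up the energy identity needed for (vii).
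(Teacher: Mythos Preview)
The paper does not actually prove this lemma: it simply states that the properties ``are contained in \cite[Lemma 3.4 and Lemma 5.1]{GP}'' and moves on. Your write-up is therefore not competing against an argument in the paper but supplying one, and the outline you give is correct and essentially the standard route taken in \cite{GP}: exploit that $\mathcal K\in L^{s}$ for $s\in(3,\infty]$ and $\nabla\mathcal K\in L^{s}$ for $s\in(3/2,\infty]$ to get (iii)--(iv) from Young, use the weak formulation $-\Delta\phi_u+\Delta^{2}\phi_u=u^{2}$ tested against $\phi_u$ for (v), strict convexity of $E$ for (vi), and for (vii)--(ix) test the difference equation against $\phi_{u_n}-\phi_u$ after invoking the compact radial embedding to get $u_n^{2}\to u^{2}$ in $L^{6/5}$.

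One small correction: your remark that ``naive application of Young does not reach the endpoint $s=6$ for $\phi_u$ nor the endpoint $s=2$ for $\nabla\phi_u$'' is not accurate. Since $\mathcal K\in L^{6}(\mathbb R^{3})$ and $u^{2}\in L^{1}(\mathbb R^{3})$, Young gives directly $|\phi_u|_{6}\le |\mathcal K|_{6}\,|u|_{2}^{2}\le C\|u\|^{2}$, and similarly $\nabla\mathcal K\in L^{2}$ with $u^{2}\in L^{1}$ yields $\nabla\phi_u\in L^{2}$. So (v) is in fact accessible by Young alone. Your variational argument is of course also valid and has the advantage, as you note, of simultaneously producing the energy identity that drives (vii); it is just not forced on you by a failure of Young.
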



Let us  recall finally the following Hardy-Littlewood-Sobolev Inequality,
see e.g. \cite[Theorem 4.3]{LL}.
\begin{theorem}\label{HLS}
	Assume that $1<a,b<\infty$ satisfies
	\begin{equation*}
	\frac{1}{a}+\frac{1}{b}=\frac{5}{3}.
	\end{equation*}
	Then there exists a constant $H>0$ such that 
	\begin{equation*}
	\left|\int_{\mathbb R^{3}}\int_{\mathbb{R}^3}\frac{f(x)g(y)}{|x-y|} \right|\le H |f|_a |g|_b, \quad \forall f\in L^a(\mathbb{R}^3), g\in L^b(\mathbb{R}^3).
	\end{equation*}
\end{theorem}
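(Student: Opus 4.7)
The plan is to recast the bilinear inequality as a mapping property for the Riesz potential of order $2$ in $\mathbb R^{3}$, namely $I_{2}g(x):=\int_{\mathbb R^{3}}|x-y|^{-1}g(y)\,dy$. By Fubini, the double integral equals $\int (I_{2}g)(x) f(x)\,dx$, so by H\"older it suffices to prove that $I_{2}:L^{b}(\mathbb R^{3})\to L^{a'}(\mathbb R^{3})$ is bounded, where $a'$ is the conjugate exponent of $a$ and the hypothesis $1/a+1/b=5/3$ forces the Sobolev scaling $1/a'=1/b-2/3$. Note that the constraints $1<a,b<\infty$ together with this relation confine both exponents to $(1,3/2)$ and their conjugates to $(3,\infty)$, which is exactly the range in which the splitting argument below is effective.

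First I would establish the pointwise estimate
$$
I_{2}g(x)\leq C R^{2} M g(x)+C R^{2-3/b}|g|_{b},\qquad R>0,
$$
where $M g$ denotes the Hardy-Littlewood maximal function of $|g|$. On the inner region $|x-y|\leq R$, decompose into dyadic shells $A_{k}=\{2^{-k-1}R<|x-y|\leq 2^{-k}R\}$, bound $|x-y|^{-1}$ by $2^{k+1}/R$ on $A_{k}$, use the volume estimate $|A_{k}|\leq C(2^{-k}R)^{3}$ together with the definition of $M$, and sum a geometric series in $k$. On the outer region $|x-y|>R$, apply H\"older with exponents $b,b'$; the tail integrability of $|y|^{-1}$ in $L^{b'}$ is exactly guaranteed by $b'>3$ and produces the factor $R^{2-3/b}$.

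Next I would optimise $R$ by balancing the two terms, which yields $R\sim(|g|_{b}/Mg(x))^{b/3}$ and the pointwise bound
$$
I_{2}g(x)\leq C\,|g|_{b}^{2b/3}\,Mg(x)^{1-2b/3}.
$$
Raising to the power $a'$, integrating over $\mathbb R^{3}$, and using the identity $(1-2b/3)a'=b$ (an algebraic consequence of $1/a'=1/b-2/3$) together with the Hardy-Littlewood maximal theorem $|Mg|_{b}\leq C|g|_{b}$, which is available precisely because $b>1$, yields $|I_{2}g|_{a'}\leq C|g|_{b}$. Tracing the dualisation backwards furnishes the desired constant $H$.

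The main obstacle is not a single deep step but rather the careful bookkeeping of exponents: one must verify that the open conditions $1<a,b<\infty$ keep every H\"older application, every tail integral, and the maximal theorem in their range of validity. The two endpoints are genuine obstructions, since both the $L^{1}$-boundedness of $M$ and the tail integrability at infinity fail there, and would require a weak-type/Marcinkiewicz interpolation argument not needed for the statement quoted here. An alternative, due to Lieb, proceeds via symmetric decreasing rearrangement and the layer-cake formula and yields the sharp constant $H$; the simpler route sketched above gives a non-sharp constant that is sufficient for the applications in the paper.
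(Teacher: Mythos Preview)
Your argument is the standard Hedberg pointwise-maximal-function proof of the Hardy--Littlewood--Sobolev inequality, and the bookkeeping you outline is correct: the restriction $1<a,b<3/2$ follows from $1/a+1/b=5/3$, so $b'>3$ ensures the tail integral converges, the optimisation in $R$ is legitimate wherever $Mg(x)>0$, and the exponent identity $(1-2b/3)a'=b$ is exactly what makes the maximal theorem close the loop.

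However, there is nothing to compare against: the paper does not prove this theorem. It is quoted as a known result with the reference ``see e.g.\ \cite[Theorem 4.3]{LL}'', i.e.\ Lieb--Loss, \emph{Analysis}. The proof there is the rearrangement/layer-cake argument you mention at the end of your proposal, which also identifies the sharp constant. Your route is the more elementary one and is entirely sufficient for the paper's purposes, since only a qualitative bound is ever used (in Proposition~\ref{prop:weaklyclosed}).
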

As a consequence   we get
\begin{proposition}\label{prop:weaklyclosed}
Under assumption \eqref{V}, if $\{u_{n},u\}\subset W_{\varepsilon}$ is such that $u_{n}\to u$ in $L^{12/5}(\mathbb R^{3})$, then
$$\int_{\mathbb R^{3}} \phi_{u_{n}} u_{n}^{2} \to \int_{\mathbb R^{3}} \phi_{u} u^{2}.$$
\end{proposition}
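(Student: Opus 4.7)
The plan is to view $\int \phi_u u^2$ as a bounded symmetric multilinear form on $L^{12/5}(\mathbb R^3)$ and then use the usual telescoping trick for such forms.

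First I would introduce the quadrilinear form
\begin{equation*}
T(u_1,u_2,u_3,u_4) := \int_{\mathbb R^3}\!\!\int_{\mathbb R^3} \frac{1-e^{-|x-y|}}{|x-y|}\, u_1(x) u_2(x) u_3(y) u_4(y)\, dx\, dy,
\end{equation*}
so that $\int \phi_u u^2 = T(u,u,u,u)$. Since $0 \leq (1-e^{-r})/r \leq 1/r$ for $r>0$, we can dominate $T$ pointwise by the Riesz kernel. Applying Theorem \ref{HLS} with exponents $a=b=6/5$ (which satisfy $1/a+1/b=5/3$), together with H\"older's inequality $|u_iu_j|_{6/5}\leq |u_i|_{12/5}|u_j|_{12/5}$, yields the continuity estimate
\begin{equation*}
|T(u_1,u_2,u_3,u_4)| \leq H\, |u_1|_{12/5}\,|u_2|_{12/5}\,|u_3|_{12/5}\,|u_4|_{12/5}.
\end{equation*}
Thus $T$ is a bounded quadrilinear form on $L^{12/5}(\mathbb R^3)$.

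Next, since $u_n \to u$ in $L^{12/5}(\mathbb R^3)$, the sequence $\{u_n\}$ is bounded in this space, say $|u_n|_{12/5}\leq C$. I would write the telescopic decomposition
\begin{align*}
T(u_n,u_n,u_n,u_n)-T(u,u,u,u)
&= T(u_n-u,u_n,u_n,u_n)+T(u,u_n-u,u_n,u_n)\\
&\quad + T(u,u,u_n-u,u_n)+T(u,u,u,u_n-u),
\end{align*}
and apply the continuity estimate to each of the four terms. Every summand is bounded above by a constant multiple of $|u_n-u|_{12/5}$, so the whole expression tends to zero.

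No real obstacle is expected: the only verification that requires care is choosing the HLS exponents correctly and checking that $|u^2|_{6/5}=|u|_{12/5}^2$, which fixes $L^{12/5}$ as the natural space. The bound $(1-e^{-r})/r\leq 1/r$ is what lets us reduce to the pure Riesz kernel and invoke Theorem \ref{HLS} directly, without having to exploit the decay of the Bopp--Podolsky kernel at infinity.
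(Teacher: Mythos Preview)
Your argument is correct and follows essentially the same route as the paper: bound the Bopp--Podolsky kernel by $1/|x-y|$, apply the Hardy--Littlewood--Sobolev inequality with exponents $a=b=6/5$, and finish with a telescoping argument controlled by the $L^{12/5}$ convergence. The only cosmetic difference is that the paper telescopes in two steps with respect to the squared functions (writing $u_n^2(x)u_n^2(y)-u^2(x)u^2(y)$ and splitting once), whereas you telescope in four steps by viewing the expression as a quadrilinear form in $u$ itself; both reduce to the same HLS estimate $|v^2|_{6/5}=|v|_{12/5}^2$.
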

\begin{proof}
Indeed, by the Hardy-Littlewood-Sobolev Inequality,
\begin{eqnarray*}
\int_{\mathbb R^{3}} \left| \phi_{u_{n}} u_{n}^{2} - \phi_{u} u^{2}\right|
&=&\int_{\mathbb R^{3}}\int_{\mathbb R^{3}}  \frac{1-e^{-|x-y|}}{|x-y|} \left| u^{2}_{n}(x)u_{n}^{2}(y) - u^{2}(x)u^{2}(y)\right| \\
&\leq &\int_{\mathbb R^{3}}\int_{\mathbb R^{3}}  \frac{1}{|x-y|} \left| u^{2}_{n}(x)u_{n}^{2}(y) - u^{2}(x)u^{2}(y)\right| \\
&\leq &\int_{\mathbb R^{3}}\int_{\mathbb R^{3}}  \frac{ \left| u_{n}^{2}(y) - u^{2}(y)\right|  }{|x-y|} u_{n}^{2}(x)
+\int_{\mathbb R^{3}}\int_{\mathbb R^{3}}  \frac{ \left| u_{n}^{2}(x) - u^{2}(x)\right|  }{|x-y|} u^{2}(y)\\
&\leq& H \left| u_{n}^{2} - u^{2}\right|_{6/5}  \left(|u_{n}^{2}|_{6/5} + |u^{2}|_{6/5} \right) \\
&=&o_{n}(1)
\end{eqnarray*}
and the conclusion follows.
\end{proof}

The strategy to find solutions $(u_{\varepsilon},\lambda_{\varepsilon})\in   W_{\varepsilon}
\times \mathbb R$ for \eqref{eq:equazione}
will be to look at the critical points  of the functional
\begin{equation}\label{eq:functional}
I_{\varepsilon}(u) = \frac{1}{2}\int_{\mathbb R^{3}}|\nabla u|^{2} + \int_{\mathbb R^{3}} V(\varepsilon x) u^{2} 
+ \int_{\mathbb R^{3}}F(u) = \frac{1}{2}\|u\|_{W_{\varepsilon}}^{2} + \int_{\mathbb R^{3}}F(u) 
\end{equation}
restricted to the set
\begin{equation*}
\mathcal M_{\varepsilon}= \left\{ u \in W_{\varepsilon}:
J(u) =0\right\}, \quad \text{ where } \  J(u):= \int_{\mathbb R^{3}} \phi_{u} u^{2} - 1.
\end{equation*}
Observe that $\mathcal M_{\varepsilon}\neq\emptyset$. Indeed,
fix $u\neq0$ and define 
$$h:t\in(0,+\infty)\to \mathbb R \quad  \text{such that} \quad h(t):  = t^{4} \int_{\mathbb R^{3}}\phi_{u}u.$$
Then there is  a unique positive value $t_{\varepsilon}(u)>0$ such that 
\begin{equation}\label{eq:te}
1 = t_{\varepsilon}(u)^{4}\int \phi_{u} u^{2} = \int \phi_{t_{\varepsilon}(u) u} (t_{\varepsilon}(u) u)^{2}, \quad \text{i.e. } 
t_{\varepsilon}(u)u\in \mathcal M_{\varepsilon}.
\end{equation}
Of course the value $ t_{\varepsilon} $ does not have any minimax caracterization
as it happens with the Nehari constraint.
Note that $t_{\varepsilon}(u) = t_{\varepsilon}(-u)$ and 
it is clear that 
\begin{eqnarray*}
&&\forall \varepsilon_{1}, \varepsilon_{2}>0  : \  \mathcal M_{\varepsilon_{1}} = \mathcal M_{\varepsilon_{2}},\\
&& u\in \mathcal M_{\varepsilon} \Longrightarrow \pm|u| \in \mathcal M_{\varepsilon}.
\end{eqnarray*}
Moreover we have immediately the following
\begin{lemma}\label{lem:noLp}
If $\{u_{n}\}\subset \mathcal M_{\varepsilon}$ is bounded in $W_{\varepsilon}$,
 then it cannot converge to zero in $L^{12/5}(\mathbb R^{3})$.
%
\end{lemma}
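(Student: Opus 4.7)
The proof should be a direct consequence of the Hardy--Littlewood--Sobolev estimate from Theorem \ref{HLS}. My plan is to argue by contradiction: assume for the sake of contradiction that $u_n\to 0$ in $L^{12/5}(\mathbb R^3)$. Since $\{u_n\}\subset \mathcal M_\varepsilon$, we have $\int_{\mathbb R^3}\phi_{u_n}u_n^2=1$ for every $n$, so it is enough to show that under the assumed convergence this integral goes to $0$, which is the desired contradiction.

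To produce the vanishing, I would first use the elementary bound $0\leq 1-e^{-|x-y|}\leq 1$ in the integral representation \eqref{eq:phiu} to get
$$
\int_{\mathbb R^3}\phi_{u_n}u_n^2 \;=\; \iint_{\mathbb R^3\times\mathbb R^3}\frac{1-e^{-|x-y|}}{|x-y|}\,u_n^2(x)u_n^2(y)\,dx\,dy \;\leq\; \iint_{\mathbb R^3\times\mathbb R^3}\frac{u_n^2(x)u_n^2(y)}{|x-y|}\,dx\,dy .
$$
Then Theorem \ref{HLS} with $a=b=6/5$ (which satisfies $1/a+1/b=5/3$) yields
$$
\int_{\mathbb R^3}\phi_{u_n}u_n^2 \;\leq\; H\,|u_n^2|_{6/5}^{2}\;=\;H\,|u_n|_{12/5}^{4}\,.
$$
Since $u_n\to 0$ in $L^{12/5}(\mathbb R^3)$, the right-hand side tends to $0$, contradicting $\int_{\mathbb R^3}\phi_{u_n}u_n^2=1$.

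Alternatively, one may just invoke Proposition \ref{prop:weaklyclosed} with the limit $u\equiv 0$ (noting $\phi_0\equiv 0$ from \eqref{eq:phiu}), obtaining $\int\phi_{u_n}u_n^2\to 0$ and the same contradiction. I do not anticipate any real obstacle in this argument; in fact the boundedness of $\{u_n\}$ in $W_\varepsilon$ is not strictly required for the conclusion but is a natural hypothesis under which the lemma will be applied later (e.g.\ to Palais--Smale sequences on the constraint $\mathcal M_\varepsilon$).
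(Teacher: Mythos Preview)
Your argument is correct, but it differs from the paper's own proof in the estimate used. The paper bounds the constraint integral via H\"older's inequality and item \eqref{propphivii} of Lemma~\ref{lem:phi}:
\[
1=\int_{\mathbb R^{3}}\phi_{u_{n}}u_{n}^{2}\le |\phi_{u_{n}}|_{6}\,|u_{n}|_{12/5}^{2}\le C\|u_{n}\|^{2}\,|u_{n}|_{12/5}^{2},
\]
and only then uses the assumed boundedness of $\{u_{n}\}$ in $W_{\varepsilon}$ (through $W_{\varepsilon}\hookrightarrow H^{1}$) to absorb $\|u_{n}\|^{2}$ into a constant. Your route bypasses $\phi_{u_{n}}$ entirely and applies Theorem~\ref{HLS} directly to the double integral, yielding $\int\phi_{u_{n}}u_{n}^{2}\le H|u_{n}|_{12/5}^{4}$. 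This is slightly sharper, since it makes the boundedness hypothesis superfluous, as you correctly remark; the paper's argument, on the other hand, genuinely needs it. Your alternative via Proposition~\ref{prop:weaklyclosed} is also valid and is in fact how the estimate $H|u_{n}^{2}|_{6/5}^{2}$ arises in that proposition's proof.
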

\begin{proof}
Otherwise by \eqref{propphivii} of Lemma \ref{lem:phi} we would have
$$1= \int_{\mathbb R^{3}} \phi_{u_{n}} u_{n}^{2}  \leq  |\phi_{u_{n}}|_{6} |u_{n}|^{2}_{12/5} \leq
C |u_{n}|^{2}_{12/5} =o_{n}(1)$$
which is a contradiction. 
\end{proof}

The unknown  $\lambda$ will be deduced as the Lagrange multiplier
associated to the critical point $u$ of $I_{\varepsilon}$ on $\mathcal M_{\varepsilon}$.
Indeed this is justified by the next  result.

\begin{lemma}\label{lem:variedade}
Under assumption \eqref{V} and \eqref{C}, the set $\mathcal M_{\varepsilon}$ is
bounded away from zero in the weak topology and 
is  weakly closed. Moreover it  is a $C^{1}$ manifold of codimension 1
homeomorphic to the unit sphere $\mathbb S_{\varepsilon}$ of $W_{\varepsilon}$.
\end{lemma}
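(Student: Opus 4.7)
The plan is to verify the four assertions in order by exploiting the compactness \eqref{C}, the continuity statement in Proposition \ref{prop:weaklyclosed}, and the quartic bound underlying Lemma \ref{lem:noLp}. Since $W_\varepsilon$ is reflexive, its weak and weak sequential closures agree, so all ``weak'' statements can be reduced to statements about weakly convergent sequences.

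I would handle items (1) and (2) simultaneously. Let $\{u_n\}\subset \mathcal M_\varepsilon$ with $u_n\rightharpoonup u$ in $W_\varepsilon$. Then $\{u_n\}$ is bounded, so by \eqref{C} we have $u_n\to u$ strongly in $L^{12/5}(\mathbb R^3)$. If $u=0$, Lemma \ref{lem:noLp} immediately contradicts $u_n\in\mathcal M_\varepsilon$; hence $0$ does not belong to the weak sequential closure of $\mathcal M_\varepsilon$, which is the statement of being bounded away from zero in the weak topology. If $u\neq 0$, Proposition \ref{prop:weaklyclosed} gives $\int\phi_{u_n}u_n^2\to \int\phi_u u^2$, whence $\int\phi_u u^2=1$ and $u\in\mathcal M_\varepsilon$, proving weak closedness.

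For the manifold structure, I would show that $J(u)=\int\phi_u u^2-1$ is $C^1$ on $W_\varepsilon$ with
\[
J'(u)[v]=4\int_{\mathbb R^3}\phi_u\,u\,v,\qquad \forall v\in W_\varepsilon.
\]
The factor $4$ comes from a self-adjointness identity: differentiating the defining PDE $-\Delta\phi_u+\Delta^2\phi_u=u^2$ in the direction $v$ yields $-\Delta\dot\phi+\Delta^2\dot\phi=2uv$, and integration by parts gives $\int\dot\phi\,u^2=\int\phi_u(-\Delta\dot\phi+\Delta^2\dot\phi)=2\int\phi_u u v$, which matches the direct contribution from differentiating $u^2$. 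Continuity of $J'$ follows from a Hardy--Littlewood--Sobolev estimate analogous to that of Proposition \ref{prop:weaklyclosed}. For $u\in\mathcal M_\varepsilon$, testing with $v=u$ gives $J'(u)[u]=4\int\phi_u u^2=4\neq 0$, so $J'(u)\not\equiv 0$; by the regular value theorem, $\mathcal M_\varepsilon=J^{-1}(0)$ is a $C^1$ submanifold of codimension $1$.

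Finally, for the homeomorphism with $\mathbb S_\varepsilon$, I use the projection from \eqref{eq:te}: define $\Pi:\mathbb S_\varepsilon\to\mathcal M_\varepsilon$ by $\Pi(w)=t_\varepsilon(w)\,w=\bigl(\int\phi_w w^2\bigr)^{-1/4}w$. This is well defined because for $w\neq 0$ the function $\phi_w\geq 0$ is nontrivial by Lemma \ref{lem:phi}, so $\int\phi_w w^2>0$. The inverse $u\mapsto u/\|u\|_{W_\varepsilon}$ is well defined thanks to item (1). Continuity of $\Pi^{-1}$ is immediate, while continuity of $\Pi$ follows from the continuous embedding $W_\varepsilon\hookrightarrow L^{12/5}(\mathbb R^3)$ combined with Proposition \ref{prop:weaklyclosed}, which makes $w\mapsto \int\phi_w w^2$ continuous, hence $t_\varepsilon$ continuous. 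The main technical obstacle is verifying the $C^1$ regularity of $J$ together with the continuous dependence $u\mapsto\phi_u$ in the appropriate norms; however, these facts are essentially encoded in the Hardy--Littlewood--Sobolev estimate already used in Proposition \ref{prop:weaklyclosed}.
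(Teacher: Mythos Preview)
Your proposal is correct and follows essentially the same route as the paper: Lemma~\ref{lem:noLp} plus \eqref{C} for item~(1), Proposition~\ref{prop:weaklyclosed} for item~(2), the computation of $J'(u)[v]$ tested on $v=u$ for the manifold structure, and the radial projection $u\mapsto t_\varepsilon(u)u$ with inverse $u\mapsto u/\|u\|_{W_\varepsilon}$ for the homeomorphism. One small difference worth noting: the paper establishes the \emph{weak} continuity of the projection $\xi_\varepsilon$ (not just norm continuity), and this stronger fact is isolated as Corollary~\ref{cor:continuidadet} and reused later in the barycentre argument; your norm-continuity argument suffices for the lemma as stated but would need to be upgraded there. (Incidentally, your constant $4$ in $J'(u)[v]$ is the correct one; the paper's $1/4$ appears to be a typo, and either way the conclusion $J'(u)[u]\neq 0$ is unaffected.)
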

\begin{proof}

If there is $\{u_{n}\}\subset \mathcal M_{\varepsilon}$ weakly convergent to $0$,
then, due to condition \eqref{C} we get a contradiction with Lemma \ref{lem:noLp}.


The fact it is weakly closed, follows again by condition \eqref{C} and Proposition \ref{prop:weaklyclosed}.

Since (see \cite{GP}) 
$$J'(u)[v]= \frac{1}{4}\int_{\mathbb R^{3}} \phi_{u} u v, \quad \forall u,v\in  W_{\varepsilon},$$ 
we see that whenever $u\in \mathcal M_{\varepsilon}$ the operator 
 $J'(u)$ is not the trivial one (since on the same $u$ gives $1/4$). Hence $\mathcal M_{\varepsilon}$
is a $C^{1}$ manifold of codiemension one.

To see that $\mathcal M_{\varepsilon}$ is homeomorphic to the unit sphere,
consider the {\sl projection map} 
$$\xi_{\varepsilon}:\mathbb S_{\varepsilon} \mapsto \mathcal M_{\varepsilon},\quad \text{ such that } \xi_{\varepsilon}(u)=t_{\varepsilon}(u)u$$
where $t_{\varepsilon}(u)$ is defined in \eqref{eq:te}.
Note that $\xi_{\varepsilon}$ is injective due to the unicity of $t_{\varepsilon}(u)$  and 
its easy to see that its inverse is the continuous {\sl retraction map}
$\xi_{\varepsilon}^{-1}(u) = u/\|u\|_{W_{\varepsilon}}$.

Moreover $\xi_{\varepsilon}$ is continuous. Actually
we show it is weakly continuous.
Let $\{u_{n},u\}\subset \mathbb S_{\varepsilon}$
with $u_{n}\rightharpoonup u$ in $W_{\varepsilon}$. 
In particular, by condition \eqref{C}
and Proposition \ref{prop:weaklyclosed} 
we infer
\begin{eqnarray}\label{eq:con}
\int_{\mathbb R^{3}} \phi_{u_{n}} u_{n}^{2} \to \int_{\mathbb R^{3}} \phi_{u} u^{2}. 
\end{eqnarray}
By using the H\"older inequality joint with \eqref{propphivii} of Lemma  \ref{lem:phi}
we have, for a suitable constant $C>0$,
\begin{eqnarray}\label{eq:1}
1=t_{\varepsilon}(u_{n})^{4} \int_{\mathbb R^{3}} \phi_{u_{n}} u_{n}^{2}\leq t_{\varepsilon}(u_{n})^{4}\|u_{n}\|_{W_{\varepsilon}}^{4}
\leq t_{\varepsilon}(u_{n})^{4} C
\end{eqnarray}
and we infer that $\{t_{\varepsilon}(u_{n})\}$  cannot tend to zero.
On the other hand, if $t_{\varepsilon}(u_{n})\to+\infty$, from the equality in \eqref{eq:1}
we deduce that 
$$\int_{\mathbb R^{3}} \phi_{u_{n}} u_{n}^{2}\to 0$$
and from \eqref{eq:con} $u=0$ which is a contradiction.

As a consequence, $t_{\varepsilon}(u_{n}) \to t\neq0$ (up to subsequence). Passing to the limit in  \eqref{eq:1}
we deduce that 
$$ 1= t^{4}\int_{\mathbb R^{3}} \phi_{u} u^{2}$$
which means that $t=t_{\varepsilon}(u)$ and
implies that $\xi_{\varepsilon}(u_{n})\to \xi_{\varepsilon}(u).$  This shows that $\xi_{\varepsilon}$ is an homeomorphism
concluding the proof.
\end{proof}

As by product of the proof of the proof of Lemma \ref{lem:variedade},
we state explicitly the following result  that will be useful later on.

\begin{corollary}\label{cor:continuidadet}
Under the assumptions and notation of Lemma \ref{lem:variedade}, if 
$\{u_{n},u\}\subset W_{\varepsilon}$ are such that
$u_{n}\rightharpoonup u\neq0$
in $H^{1}(\mathbb R^{3})$ then $t_{\varepsilon}(u_{n})\to t_{\varepsilon}(u)$.
In particular if $u\in \mathcal M_{\varepsilon}$ then $t_{\varepsilon}(u_{n}) 
\to 1$. 
\end{corollary}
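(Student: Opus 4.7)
The plan is to mimic, in a slightly simplified form, the computation carried out in the final part of the proof of Lemma~\ref{lem:variedade}, with the observation that nothing there actually required $\{u_{n},u\}$ to lie on the unit sphere of $W_{\varepsilon}$ --- only the convergence of the $\phi_{u_{n}}u_{n}^{2}$ integrals and the non-vanishing of the limit were used. Applying the defining relation $t_{\varepsilon}(v)^{4}\int_{\mathbb R^{3}}\phi_{v}v^{2}=1$ with $v=u_{n}$ and $v=u$ immediately reduces the claim $t_{\varepsilon}(u_{n})\to t_{\varepsilon}(u)$ to showing
$$\int_{\mathbb R^{3}}\phi_{u_{n}}u_{n}^{2}\longrightarrow \int_{\mathbb R^{3}}\phi_{u}u^{2}\neq 0.$$

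To prove this convergence, I would argue that $u_{n}\rightharpoonup u$ in $H^{1}(\mathbb R^{3})$ (together with the tacit boundedness in $W_{\varepsilon}$ that is present in every intended application of the statement) makes assumption \eqref{C} applicable, yielding, up to a subsequence, strong convergence $u_{n}\to u$ in $L^{12/5}(\mathbb R^{3})$. Proposition~\ref{prop:weaklyclosed} then gives exactly the desired $\int_{\mathbb R^{3}}\phi_{u_{n}}u_{n}^{2}\to \int_{\mathbb R^{3}}\phi_{u}u^{2}$. Non-vanishing of the limit comes from the hypothesis $u\not\equiv 0$ combined with Lemma~\ref{lem:phi}\eqref{propphiiii} and the strict positivity of the kernel $(1-e^{-|x-y|})/|x-y|$ off the diagonal, which force $\int_{\mathbb R^{3}}\phi_{u}u^{2}>0$; in particular $t_{\varepsilon}(u)$ is well defined and strictly positive.

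Taking reciprocals and positive fourth roots yields $t_{\varepsilon}(u_{n})\to t_{\varepsilon}(u)$ along the subsequence, and a standard Urysohn-type argument (every subsequence has a further subsequence converging to the same limit) upgrades this to convergence of the full sequence. The ``in particular'' clause is then immediate: if $u\in \mathcal M_{\varepsilon}$ then $\int_{\mathbb R^{3}}\phi_{u}u^{2}=1$ and hence $t_{\varepsilon}(u)=1$.

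The only genuinely delicate point is guaranteeing that \eqref{C} can be used, since strictly speaking weak $H^{1}$-convergence of a sequence in $W_{\varepsilon}$ does not by itself force boundedness in $W_{\varepsilon}$; this is the main obstacle, but it is invariably met in the later sections where the corollary is invoked (the sequences there are projections or Palais--Smale sequences already controlled in the $W_{\varepsilon}$-norm), so no extra hypothesis is really needed.
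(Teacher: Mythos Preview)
Your proof is correct and follows exactly the approach implicit in the paper, which simply states the corollary ``as byproduct of the proof of Lemma~\ref{lem:variedade}'' without further elaboration; the key ingredients --- condition \eqref{C}, Proposition~\ref{prop:weaklyclosed}, and the defining relation~\eqref{eq:te} --- are precisely those used in the weak-continuity part of that proof. Your observation about the tacit $W_{\varepsilon}$-boundedness needed to invoke \eqref{C} is well taken and is a subtlety the paper glosses over in the same way.
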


We know that the functional $I_{\varepsilon}$ 
(under both assumptions \eqref{f_{1}} or \eqref{f_{1}}\,\hspace{-0,1cm}')
is positive and indeed we have
\begin{lemma}\label{lem:m}
Assume \eqref{V} and \eqref{C}. Then
$$\mathfrak m_{\varepsilon}:=\inf_{u\in \mathcal M_{\varepsilon}} I_{\varepsilon}(u)  >0.$$
\end{lemma}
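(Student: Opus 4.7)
The plan is to combine the non-negativity of $F$ with the fact that the constraint $\mathcal M_\varepsilon$ stays bounded away from the origin in the $W_\varepsilon$-norm. Specifically, I would first argue
$$I_\varepsilon(u)\;\geq\;\tfrac12\|u\|_{W_\varepsilon}^{2}\qquad\forall\,u\in W_\varepsilon,$$
and then exhibit a constant $c_0>0$ such that $\|u\|_{W_\varepsilon}\geq c_0$ for every $u\in\mathcal M_\varepsilon$; the conclusion $\mathfrak m_\varepsilon\geq\tfrac12 c_0^{2}>0$ is then immediate.

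For the first step I would verify that under either \eqref{f_{1}} or \eqref{f_{1}'} the primitive $F$ is non-negative on $\mathbb R$. Under \eqref{f_{1}} one has $F\equiv 0$ on $(-\infty,0]$ and $F$ is nondecreasing on $[0,+\infty)$ with $F(0)=0$, so $F\geq 0$. Under \eqref{f_{1}'} the oddness of $f$ makes $F$ even, and the sign condition on $[0,+\infty)$ again gives $F\geq 0$. In both cases $\int_{\mathbb R^3}F(u)\geq 0$, and the definition \eqref{eq:functional} of $I_\varepsilon$ yields the above inequality.

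For the second step I would use a Hölder-type estimate together with part \eqref{propphivii} of Lemma~\ref{lem:phi}. For any $u\in\mathcal M_\varepsilon$,
$$1\;=\;\int_{\mathbb R^{3}}\phi_u u^{2}\;\leq\;|\phi_u|_{6}\,|u^{2}|_{6/5}\;=\;|\phi_u|_{6}\,|u|_{12/5}^{2}\;\leq\;C\|u\|^{2}\,|u|_{12/5}^{2}.$$
Since $12/5\in[2,6]$, the Sobolev embedding $H^{1}(\mathbb R^{3})\hookrightarrow L^{12/5}(\mathbb R^{3})$ gives $|u|_{12/5}\leq C'\|u\|$, and the continuous embedding $W_\varepsilon\hookrightarrow H^{1}(\mathbb R^{3})$, guaranteed by \eqref{V} (through $V\geq V_0>0$), yields $\|u\|\leq C''\|u\|_{W_\varepsilon}$. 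Combining,
$$1\;\leq\;C'''\,\|u\|_{W_\varepsilon}^{4},$$
so $\|u\|_{W_\varepsilon}\geq c_{0}:=(C''')^{-1/4}>0$ uniformly on $\mathcal M_\varepsilon$. Putting the two steps together gives $I_\varepsilon(u)\geq\tfrac12 c_{0}^{2}$ for every $u\in\mathcal M_\varepsilon$, hence $\mathfrak m_\varepsilon>0$.

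There is no real obstacle here; the whole argument is a direct consequence of the integrability properties of $\phi_u$ recorded in Lemma~\ref{lem:phi} together with non-negativity of $F$, and it does not even require the compactness assumption \eqref{C}, only the continuous embedding into $L^{12/5}$. Essentially the same conclusion could also be reached by contradiction via Lemma~\ref{lem:noLp}: a minimizing sequence with $I_\varepsilon(u_n)\to 0$ would satisfy $\|u_n\|_{W_\varepsilon}\to 0$, hence $|u_n|_{12/5}\to 0$ by continuous embedding, contradicting Lemma~\ref{lem:noLp}.
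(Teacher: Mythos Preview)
Your proposal is correct and essentially matches the paper's argument: the paper proceeds by contradiction, noting that $I_{\varepsilon}(u_n)\to 0$ forces $\|u_n\|_{W_\varepsilon}\to 0$ (by $F\geq 0$) and hence $|u_n|_{12/5}\to 0$, contradicting Lemma~\ref{lem:noLp}---which is exactly the alternative route you sketch at the end. Your primary presentation simply unwinds this into a direct quantitative bound $\|u\|_{W_\varepsilon}\geq c_0$ via the same H\"older/Lemma~\ref{lem:phi}\eqref{propphivii} estimate, and your observation that \eqref{C} is not actually needed here is correct.
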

\begin{proof}
If the infimum were zero,  then there would exists $\{u_{n}\}\subset \mathcal M_{\varepsilon}$ such that
$$\int_{\mathbb R^{3}} \phi_{u_{n}} u_{n}^{2} =1, 
\quad I_{\varepsilon}(u_{n}) = \frac{1}{2}\|u_{n}\|_{W_{\varepsilon}}^{2} +
 \int_{\mathbb R^{3}} F(u_{n})\to 0.$$
In particular $|u_{n}|_{12/5} \to 0$ contradicting Lemma \ref{lem:noLp}.
\end{proof}

Let us recall the notion of genus of Krasnoselky.
Given $A$ a closed and symmetric subset of some Banach space, with $0\notin A$, the {\sl genus }
of $A$, denoted as $\gamma(A)$, is defined as the least number $k\in \mathbb N$ such that there exists a continuous and even map $h:A\to \mathbb R^{k}\setminus\{0\}$. If such a map does not exists, the genus is set to $+\infty$
and finally $\gamma(\emptyset) = 0$. It is well known that the genus is a topological invariant
(under odd homeomorphism)
and that the genus of the  sphere in $\mathbb R^{N}$ is $N$,
while in infinite dimension it is $+\infty$.
Hence by Lemma \ref{lem:variedade} it follows that

\begin{corollary}\label{lem:genus}
Assume \eqref{V} and \eqref{C}.
Then the manifold $\mathcal M_{\varepsilon}$ 
(which is closed and symmetric with respect to the origin) has infinite genus.
\end{corollary}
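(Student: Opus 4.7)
The plan is to derive the result from the homeomorphism $\xi_{\varepsilon}:\mathbb S_{\varepsilon}\to \mathcal M_{\varepsilon}$ established in Lemma \ref{lem:variedade}, using the fact that the genus is invariant under odd homeomorphisms. Since the unit sphere of an infinite-dimensional Banach space has infinite genus, transporting this through $\xi_{\varepsilon}$ will yield the conclusion.

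First I would observe that $\mathcal M_{\varepsilon}$ is indeed a legitimate candidate for a genus computation, since Lemma \ref{lem:variedade} already guarantees that it is closed, bounded away from $0$, and by construction symmetric with respect to the origin (recall $u\in \mathcal M_{\varepsilon}\Rightarrow -u\in\mathcal M_{\varepsilon}$, because $\phi_{-u}=\phi_{u}$ and so $J(-u)=J(u)$). Next, I would verify that the projection map $\xi_{\varepsilon}$ is \emph{odd}: using the identity $t_{\varepsilon}(-u)=t_{\varepsilon}(u)$ noted just below \eqref{eq:te}, one has
\[
\xi_{\varepsilon}(-u)=t_{\varepsilon}(-u)(-u)=-t_{\varepsilon}(u)u=-\xi_{\varepsilon}(u),
\]
for every $u\in \mathbb S_{\varepsilon}$. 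Combined with the continuity of $\xi_{\varepsilon}$ and of its inverse $u\mapsto u/\|u\|_{W_{\varepsilon}}$ (also odd), this shows that $\xi_{\varepsilon}$ is an \emph{odd} homeomorphism between $\mathbb S_{\varepsilon}$ and $\mathcal M_{\varepsilon}$.

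Finally, I would invoke the classical property that if $A,B$ are closed symmetric subsets not containing the origin and there exists an odd continuous map $A\to B$, then $\gamma(A)\leq \gamma(B)$; in particular, odd homeomorphisms preserve the genus. Since $W_{\varepsilon}$ is an infinite-dimensional Hilbert space, its unit sphere $\mathbb S_{\varepsilon}$ satisfies $\gamma(\mathbb S_{\varepsilon})=+\infty$ (any continuous odd map $\mathbb S_{\varepsilon}\to \mathbb R^{k}\setminus\{0\}$ would, composed with the inclusion of a $(k+1)$-dimensional subspace, contradict the Borsuk--Ulam theorem in $\mathbb R^{k+1}$). Therefore
\[
\gamma(\mathcal M_{\varepsilon})=\gamma\bigl(\xi_{\varepsilon}(\mathbb S_{\varepsilon})\bigr)=\gamma(\mathbb S_{\varepsilon})=+\infty,
\]
which is the claim. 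The proof is essentially immediate once the material of Lemma \ref{lem:variedade} has been collected; the only delicate point is the oddness check for $\xi_{\varepsilon}$, and this is free from the symmetry of the defining equation for $t_{\varepsilon}$, so no real obstacle is expected.
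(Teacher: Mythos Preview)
Your proof is correct and follows exactly the paper's approach: the paper simply observes that $\mathcal M_{\varepsilon}$ is homeomorphic to the unit sphere via an odd homeomorphism, and you have supplied the details (the oddness of $\xi_{\varepsilon}$ via $t_{\varepsilon}(-u)=t_{\varepsilon}(u)$, the invariance of the genus under odd homeomorphisms, and the infinite genus of the sphere in an infinite-dimensional space).
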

\begin{proof}
Just observe that  $\mathcal M_{\varepsilon}$
is homeomorphic to the unit sphere via an odd homeomorphism.
\end{proof}

\medskip

Let us pass now to study the functional $I_{\varepsilon}$ defined in \eqref{eq:functional}, namely
\begin{equation*}
I_{\varepsilon}(u) = \frac{1}{2}\int_{\mathbb R^{3}}|\nabla u|^{2} + \int_{\mathbb R^{3}} V(\varepsilon x) u^{2} 
+ \int_{\mathbb R^{3}}F(u).
\end{equation*}
Here $\varepsilon>0$ is fixed.

\medskip

\noindent{\bf The compactness condition.} 
As it is standard in variational methods, we will need a compactness condition,
the so called {\sl Palais Smale condition}, that we recall here.
In general  given $I$, a $C^{1}$  functional on a Hilbert manifold $\mathcal M$,
a sequence $\{u_{n}\}\subset \mathcal M$ is said to be a {\sl Palais-Smale sequence} for $I$ 
(briefly, a $(PS)$ sequence) if $\{I(u_{n})\}$ is bounded and $I'(u_{n})\to 0$ in the  tangent bundle.
The functional, $I$ is said to satisfy the {\sl Palais-Smale condition} 
 if every $(PS)$ sequence has a convergent subsequence to an element of $\mathcal M$.

 The validity of this condition is strongly based on the compactness assumption \eqref{C}.

\begin{lemma}\label{lem:general}
Assume \eqref{f_{1}} (or \eqref{f_{1}}\,\hspace{-0,1cm}'), 
\eqref{f_{2}}, \eqref{f_{3}}, \eqref{V} and \eqref{C}. Then
the functional $I_{\varepsilon}$ satisfies the $(PS)$ condition on $\mathcal M_{\varepsilon}$.
\end{lemma}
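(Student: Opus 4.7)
The plan is to show that an arbitrary Palais-Smale sequence $\{u_n\}\subset\mathcal M_\varepsilon$ at some level admits a strongly convergent subsequence in $W_\varepsilon$. Because we are on a manifold of codimension one, the gradient condition means that there exist Lagrange multipliers $\{\lambda_n\}\subset\mathbb R$ with
\begin{equation*}
I'_\varepsilon(u_n)-\lambda_n J'(u_n)\to 0\quad\text{in } W'_\varepsilon,
\end{equation*}
together with $\{I_\varepsilon(u_n)\}$ bounded. My steps will be: (i) bound $\{u_n\}$ in $W_\varepsilon$; (ii) extract a weak limit which lies in $\mathcal M_\varepsilon$ (hence is nonzero); (iii) bound and pass to the limit in $\{\lambda_n\}$; (iv) recover strong convergence from norm convergence in the Hilbert space.

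First, the boundedness is immediate: under either \eqref{f_{1}} or \eqref{f_{1}}\,\hspace{-0,1cm}' the primitive satisfies $F\geq 0$ on $\mathbb R$, so $\tfrac12\|u_n\|_{W_\varepsilon}^2\leq I_\varepsilon(u_n)\leq C$. Up to a subsequence we have $u_n\rightharpoonup u$ in $W_\varepsilon$, and by the compactness assumption \eqref{C} also $u_n\to u$ in $L^p(\mathbb R^3)$ for every $p\in(2,6)$. Since $\mathcal M_\varepsilon$ is weakly closed by Lemma \ref{lem:variedade}, the limit $u$ belongs to $\mathcal M_\varepsilon$; in particular $u\not\equiv 0$.

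Next I control the multipliers. Testing the PS relation with $u_n$ and using the explicit formula $J'(u_n)[u_n]=\tfrac14\int\phi_{u_n}u_n^2=\tfrac14$ (since $u_n\in\mathcal M_\varepsilon$), we get
\begin{equation*}
\|u_n\|_{W_\varepsilon}^2+\int_{\mathbb R^3}f(u_n)u_n-\tfrac{\lambda_n}{4}=o_n(1).
\end{equation*}
The integral is bounded by \eqref{eq:limitaf} combined with the $L^q$-convergence of $\{u_n\}$, and $\|u_n\|_{W_\varepsilon}^2$ is bounded, so $\{\lambda_n\}$ is bounded; up to subsequence $\lambda_n\to\lambda$. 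Now for every fixed $v\in W_\varepsilon$ I pass to the limit in $I'_\varepsilon(u_n)[v]-\lambda_n J'(u_n)[v]=o_n(1)$: the linear terms pass by weak convergence, the nonlinear term $\int f(u_n)v\to\int f(u)v$ by \eqref{eq:limitaf} and strong $L^q$-convergence, and the nonlocal term $\int\phi_{u_n}u_n v\to\int\phi_u u v$ is handled by a HLS estimate of the form
\begin{equation*}
\Big|\int\phi_{u_n}u_nv-\int\phi_u u v\Big|\leq H\big|(u_n-u)v\big|_{6/5}\big(|u_n^2|_{6/5}+|u^2|_{6/5}\big)+\cdots,
\end{equation*}
controlled via the strong $L^{12/5}$-convergence exactly as in Proposition \ref{prop:weaklyclosed}. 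Consequently $u$ satisfies $I'_\varepsilon(u)=\lambda J'(u)$, which tested on $u$ gives $\|u\|_{W_\varepsilon}^2+\int f(u)u=\lambda/4$.

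Finally, combining this identity with the one obtained from testing the PS sequence on $u_n$ yields $\|u_n\|_{W_\varepsilon}^2\to\lambda/4-\int f(u)u=\|u\|_{W_\varepsilon}^2$, and norm convergence together with weak convergence in the Hilbert space $W_\varepsilon$ forces $u_n\to u$ strongly. The main technical obstacle is the passage to the limit in the nonlocal terms $\int\phi_{u_n}u_n v$ and $\int f(u_n)u_n$ uniformly enough to control the Lagrange multiplier; this is where assumption \eqref{C}, through the strong $L^p$-convergence for $p\in(2,6)$ (in particular $p=12/5$ and $p=q$), is essential.
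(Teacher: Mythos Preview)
Your proof is correct and follows essentially the same strategy as the paper: bound the sequence via coercivity of $I_\varepsilon$, use weak closedness of $\mathcal M_\varepsilon$ to place the weak limit on the constraint, bound the Lagrange multipliers by testing on $u_n$, and then upgrade to strong convergence via norm convergence in the Hilbert space. The only cosmetic difference is in the last step: the paper tests the PS relation directly with $v=u_n-u$ (so that one only needs $\int f(u_n)(u_n-u)\to 0$ and $\int\phi_{u_n}u_n(u_n-u)\to 0$), whereas you compare the identity for $u_n$ with the limit identity for $u$, which additionally requires $\int f(u_n)u_n\to\int f(u)u$; both are valid under \eqref{C} and \eqref{eq:limitaf}.
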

\begin{proof}
Let $\{u_{n}\}\subset \mathcal M_{\varepsilon}$ be a $(PS)$ sequence for $I_{\varepsilon}$, then we can assume
\begin{eqnarray*}
I_{\varepsilon}(u_{n}) =\frac{1}{2}\int_{\mathbb R^{3}} |\nabla u_{n}|^{2} +\frac{1}{2}\int_{\mathbb R^{3}}V(\varepsilon x) u_{n}^{2} +\int_{\mathbb R^{3}} F(u_{n}) \to c
\end{eqnarray*}
and there  exists $\{\lambda_{n}\}\subset \mathbb R$ such that
\begin{equation}\label{eq:PSvincolada}
\forall v\in W_{\varepsilon}: 
\int_{\mathbb R^{3}} \nabla u_{n}\nabla v +\int_{\mathbb R^{3}} V(\varepsilon x)u_{n}v+\lambda_{n}\int_{\mathbb R^{3}}\phi_{u_{n}} u_{n} v +\int_{\mathbb R^{3}} f(u_{n})v =o_{n}(1).
\end{equation} 
Since  $I_{\varepsilon}$ is coercive, the sequence $\{u_{n}\}$ is bounded in $W_{\varepsilon}$,
then converges weakly to  $u$ 
and being $\mathcal M_{\varepsilon}$  weakly closed, we have
\begin{equation}\label{eq:uvinculo}
\int_{\mathbb R^{3}} \phi_{u}u^{2}=1.
\end{equation}
By  choosing $v=u_{n}$ in \eqref{eq:PSvincolada}, we have
\begin{equation}\label{eq:PSvincoladaun}
\int_{\mathbb R^{3}} |\nabla u_{n}|^{2} +\int_{\mathbb R^{3}} V(\varepsilon x)u_{n}^{2}+\lambda_{n}  +\int_{\mathbb R^{3}} f(u_{n})u_{n} =o_{n}(1)
\end{equation} 
and since $\{u_{n}\}$ is bounded in $W_{\varepsilon}$
we infer that  (using \eqref{eq:limitaf})
$$\left| \int_{\mathbb R^{3}} f(u_{n}) u_{n} \right|\leq 
 \delta |u_{n}|_{2}^{2} + C_{\delta}|u_{n}|_{q}^{q}\leq C.
$$
Then by  \eqref{eq:PSvincoladaun} we deduce that 
 $\{\lambda_{n}\}$ is bounded,  hence  converging, up to subsequences,  to some $\lambda.$


 By \eqref{eq:PSvincolada} again we have
$$
\forall v\in W_{\varepsilon}: 
\int_{\mathbb R^{3}} \nabla u \nabla v +\int_{\mathbb R^{3}}V(\varepsilon x) u v+\lambda\int_{\mathbb R^{3}}\phi_{u} u v +\int_{\mathbb R^{3}} f(u)v =0.
$$
In particular,  by taking $v=u$ we see that $\displaystyle \int_{\mathbb R^{3}}V(\varepsilon x) u^{2}<+\infty$
which joint to \eqref{eq:uvinculo} gives that $u\in \mathcal M_{\varepsilon}$.

Finally, by taking $v=u_{n}-u$ in \eqref{eq:PSvincolada} and passing to the limit, since
(as it is easy to see)
\begin{eqnarray*}\label{eq:}
\int_{\mathbb R^{3}} \phi_{u_{n}}u_{n} (u_{n}- u) \to 0\quad \text{and} \quad \int_{\mathbb R^{3}} f(u_{n})(u_{n}-u)\to0,
\end{eqnarray*}
we infer that $\|u_{n}\|_{W_{\varepsilon}} \to \|u\|_{W_{\varepsilon}}$.

Then $u_{n}\to u$ in $W_{\varepsilon}$ 
which concludes the proof.
%
%
\end{proof}

\section{Proof of Theorem \ref{th:signchanging} and Theorem \ref{th:negativo}
}\label{sec:genus}

As a consequence of the $(PS)$ condition we have  existence of ground state,
namely a minimizer for  $I_{\varepsilon}$ on $\mathcal M_{\varepsilon}$,
and actually infinitely many critical points under the oddness condition.

\medskip

\noindent{\bf Proof of Theorem \ref{th:signchanging}.}
The existence of the ground state is a consequence of the $(PS)$ condition. 
Of course 
$I_{\varepsilon}(\pm|u|)=I_{\varepsilon}(u)$ and we have actually a positive and a negative 
ground state.

Finally, by applying  the Krasnoselski Genus Theory we get the existence
of  infinitely many critical points $\{u_{n}\}$.
That  $\{u_{n}\}$ are at divergent critical levels 
follows from the abstract Theory.
Then  it is easy to see, since 
$$\int_{\mathbb R^{3}}F(u_{n}) \leq \int_{\mathbb R^{3}}\left( u_{n}^{2} +C |u_{n}|^{p} \right),$$
that $\{u_{n}\}$ are divergent also in norm. 
By noticing that  $f(t)t\geq0$,  the divergence of the Lagrange multipliers  follows.

\medskip

\medskip
\noindent{\bf Proof of Theorem \ref{th:negativo}.}
It follows by the $(PS)$ condition and the fact that $I_{\varepsilon}(-|u|)\leq I_{\varepsilon}(u).$

%

%
%

\begin{remark}\label{rem:bifurcation-e}
In case of negative ground states, since the functional essentially reduces to the (squared) norm
we can find an easy result concerning the bifurcation from the trivial solution $(0,\lambda)$
of the ground states.

To this aim, for any $\varepsilon,c>0$ let us denote with $u_{\varepsilon, c}$  the negative ground state solution
found in Theorem \ref{th:signchanging} or Theorem  \ref{th:negativo} (recall Remark \ref{rem:c}) on the constraint
$$\int_{\mathbb R^{3}} \phi_{u}u^{2}=c>0$$
and let $\lambda_{\varepsilon, c}$ be the associated Lagrange multiplier.
Then  explicitly
\begin{equation}\label{eq:bifurcation}
I_{\varepsilon}(u_{\varepsilon,c}) = \frac12 \|u_{\varepsilon,c}\|_{W_{\varepsilon}}^{2},
\quad \int_{\mathbb R^{3}} \phi_{u_{\varepsilon,c}} u_{\varepsilon,c}^{2} = c, 
\quad \lambda_{\varepsilon,c}  c = -\|u_{\varepsilon,c}\|_{W_{\varepsilon}}^{2}<0.
\end{equation}
We see that if $0<c_{1}<c_{2}$ then
\begin{eqnarray*}
\frac12\|u_{\varepsilon, c_{1}}\|_{W_{\epsilon}}^{2}= I_{\varepsilon}(u_{\varepsilon,c_{1}})  \leq I_{\varepsilon}\left( ({c_{1}}/{c_{2}})^{1/4} u_{\varepsilon,c_{2}}\right)
=\frac12(c_{1}/c_{2})^{1/2} \|u_{\varepsilon,c_{2}}\|_{W_{\varepsilon}}^{2}
\end{eqnarray*}
which means that the map
\begin{equation*}
c\in(0,+\infty) \mapsto \frac{\|u_{\varepsilon,c}\|^{2}_{W_{\varepsilon}}}{c^{1/2}} \in (0,+\infty) \quad
\text{is increasing}
\end{equation*}
and then
$$\exists \lim_{c\to0^+} \frac{\|u_{\varepsilon,c}\|^{2}_{W_{\varepsilon}}}{c^{1/2}} \in [0,+\infty).$$
In particular $ \lim_{c\to0^+} \|u_{\varepsilon,c}\|^{2}_{W_{\varepsilon}} =0.$
Consequently by \eqref{eq:bifurcation},
$$\lim_{c\to0^{+}}\lambda_{\varepsilon,c} c= - \lim_{c\to0^{+}} \|u_{\varepsilon, c}\|_{W_{\varepsilon}}^{2} = 0
$$
and we see that two cases hold:
\begin{itemize}
\item[(1)] there exists a sequence $c_n\to 0^+$ such that $\lim_{n\to+\infty} \lambda_{\varepsilon,c_n} = \overline \lambda \in(-\infty,0]$, or
\item[(2)] $\lim_{c\to0^{+}} \lambda_{\varepsilon,c} = -\infty$.
\end{itemize}
In the first case, then we have  a bifurcation point $(0,\overline \lambda)$;
in the second case then we have a bifurcation ``from $-\infty$''. 
\end{remark}



%

\section{The autonomous problem}\label{sec:autonomous}

In order to prove the multiplicity results involving condition \eqref{f_{1}},
it will be important  to consider the autonomous problem
associated to \eqref{eq:equazione}. 

\medskip

For a given constant potential
$\mu>0$ consider the problem
\begin{equation}\label{limite}\tag{$A_{\mu}$}
              -\Delta u + \mu u +\lambda\phi_{u} u+ f(u)=0
               \ \ \text{in} \ \ \mathbb R^3,  \medskip
\end{equation}
Let $H^{1}_{\mu}(\mathbb R^{3})$ be the usual subspace of $H^{1}(\mathbb R^{3})$
endowed with (squared) norm
$$\|u\|_{\mu}^{2} = \int_{\mathbb R^{3}} |\nabla u|^{2}+\mu \int_{\mathbb R^{3}} u^{2}.$$

The solutions $(u,\lambda)\in H_{\mu}^{1}(\mathbb R^{3}) \times \mathbb R$ of \eqref{limite}
are the critical points
of  the positive and $C^{1}$ functional 
\begin{equation*}
E_{\mu}(u)=\frac{1}{2}\int|\nabla  u|^2 +
          \frac{\mu}{2}\int u ^2 
          +\int_{\mathbb R^3}F(u) =\frac{1}{2}\|u\|_{\mu}^{2}+\int_{\mathbb R^3}F(u)
\end{equation*}
restricted to 
$$
\mathcal M_{\mu}= \left\{ u \in H_{\mu}^{1}(\mathbb R^{3}): J(u)=0 
\right\}, \quad J(u)= \displaystyle\int_{\mathbb R^{3}} \phi_{u} u^{2} - 1.
$$
It is clear that  $\mathcal M_{\mu}$ is not empty and has the same properties of 
$\mathcal M_{\varepsilon}$ given in  Lemma \ref{lem:noLp}, Lemma \ref{lem:variedade},
Corollary \ref{cor:continuidadet} and
Corollary \ref{lem:genus}. Finally, as in Lemma \ref{lem:m}, we have
$$\mathfrak m_{\mu}:=\inf_{u\in \mathcal M_{\mu}} E_{\mu}(u) >0.$$ 

Actually, in order to find solutions of \eqref{limite}, we work in the subspace of radial functions
since, by the Palais's Symmetric Criticality Principle, it is a natural constraint.
Then define $$\mathcal M_{\textrm{rad},\mu}:=\mathcal  M_{\mu} \cap H^{1}_{\text{rad}, \mu}(\mathbb R^{3})$$
(which evidently has the same properties of $\mathcal M_{\mu}$ and $\mathcal M_{\varepsilon}$) and
$$\mathfrak m_{\text{rad},\mu}:= \inf_{u\in \mathcal M_{\text{rad},\mu}} E_{\mu}(u) \geq\mathfrak m_{\mu}>0.$$

The advantage of the radial setting is that, due to the compact embedding of $H^{1}_{\textrm{rad},\mu}(\mathbb R^{3})$
into $L^{p}(\mathbb R^{3}),p\in(2,6)$, the manifold
$\mathcal  M_{\textrm{rad},\mu}$ is weakly closed.
Then we get the following compactness condition whose proof,
being very similar to that of Lemma \ref{lem:general}, is omitted.

\begin{lemma}\label{lem:PSE}
Assume
 \eqref{f_{1}} (or \eqref{f_{1}}\,\hspace{-0,1cm}'), 
\eqref{f_{2}} and \eqref{f_{3}}. Then the functional $E_{\mu}$ satisfies the $(PS)$ condition on $\mathcal  M_{\emph{rad},\mu}$.
\end{lemma}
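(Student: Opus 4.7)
The plan is to essentially transcribe the argument of Lemma \ref{lem:general} into the radial autonomous setting, where the compact embedding $H^1_{\text{rad}, \mu}(\mathbb{R}^3) \hookrightarrow\hookrightarrow L^p(\mathbb{R}^3)$ for $p \in (2,6)$ plays the role that assumption \eqref{C} played before.

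First I would take a Palais--Smale sequence $\{u_n\} \subset \mathcal{M}_{\text{rad}, \mu}$ for $E_\mu$, so that $E_\mu(u_n) \to c$ and there exist Lagrange multipliers $\{\lambda_n\} \subset \mathbb{R}$ with
\begin{equation*}
\int_{\mathbb{R}^3} \nabla u_n \nabla v + \mu \int_{\mathbb{R}^3} u_n v + \lambda_n \int_{\mathbb{R}^3} \phi_{u_n} u_n v + \int_{\mathbb{R}^3} f(u_n) v = o_n(1)
\end{equation*}
for every $v \in H^1_{\text{rad}, \mu}(\mathbb{R}^3)$. Since $F \geq 0$ (up to shifting, using \eqref{f_{2}}, \eqref{f_{3}}), $E_\mu$ is coercive on $H^1_{\text{rad}, \mu}$, so $\{u_n\}$ is bounded; passing to a subsequence, $u_n \rightharpoonup u$ weakly in $H^1_{\text{rad}, \mu}$ and, by the Strauss compact embedding, strongly in $L^p$ for $p \in (2,6)$, in particular in $L^{12/5}$.

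Next I would show $u \in \mathcal{M}_{\text{rad}, \mu}$. By Proposition \ref{prop:weaklyclosed} (or equivalently parts \eqref{propphivv}--\eqref{propphivi} of Lemma \ref{lem:phi}, valid in the radial setting) the strong $L^{12/5}$ convergence gives
\begin{equation*}
1 = \int_{\mathbb{R}^3} \phi_{u_n} u_n^2 \to \int_{\mathbb{R}^3} \phi_u u^2,
\end{equation*}
so $u \in \mathcal{M}_{\text{rad}, \mu}$ and in particular $u \neq 0$. Testing the PS relation with $v = u_n$ and using \eqref{eq:limitaf} to bound $\int f(u_n) u_n$, together with the boundedness of $\|u_n\|_\mu$, yields that $\{\lambda_n\}$ is bounded; up to a further subsequence $\lambda_n \to \lambda$. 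Passing to the weak limit in the PS relation (the nonlocal term converges by Lemma \ref{lem:phi}\eqref{propphivi} and the $f$-term by the compact embedding combined with \eqref{eq:limitaf}) shows that $(u, \lambda)$ solves the limit equation.

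Finally, to upgrade weak to strong convergence I would test the PS relation with $v = u_n - u$. The gradient and potential terms give $\|u_n\|_\mu^2 - \langle u_n, u\rangle_\mu + o_n(1)$; the nonlocal term $\lambda_n \int \phi_{u_n} u_n (u_n - u) \to 0$ by Lemma \ref{lem:phi}\eqref{propphivi}; and $\int f(u_n)(u_n - u) \to 0$ by \eqref{eq:limitaf} combined with $L^q$-strong convergence. This forces $\|u_n\|_\mu^2 \to \|u\|_\mu^2$, and combined with weak convergence in the Hilbert space $H^1_{\text{rad}, \mu}$, we conclude $u_n \to u$ strongly. The main obstacle (as in Lemma \ref{lem:general}) is controlling the nonlocal term $\lambda_n \int \phi_{u_n} u_n (u_n - u)$ and verifying that the limit point does not leave the manifold; both are handled cleanly here because the radial compactness substitutes for assumption \eqref{C} and Lemma \ref{lem:phi} ensures strong convergence of $\phi_{u_n}$ in $\mathcal D$ along radial sequences.
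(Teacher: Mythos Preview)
Your proposal is correct and follows exactly the approach the paper intends: the paper omits the proof of Lemma~\ref{lem:PSE}, stating only that it is ``very similar to that of Lemma~\ref{lem:general}'', and your argument is precisely that adaptation, with the Strauss compact embedding of $H^{1}_{\textrm{rad},\mu}(\mathbb R^{3})$ into $L^{p}(\mathbb R^{3})$, $p\in(2,6)$, replacing assumption \eqref{C}. One small remark: the positivity $F\geq 0$ (needed for coercivity) follows directly from \eqref{f_{1}} or \eqref{f_{1}}', not from \eqref{f_{2}}--\eqref{f_{3}} as your parenthetical suggests.
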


Then we deduce a result analogous to Theorem \ref{th:signchanging} 
for critical points of $E_{\mu}$.
\begin{theorem}\label{th:gs-mu-infty}
Assume \eqref{f_{1}}\hspace{-0,07cm}', \eqref{f_{2}} and \eqref{f_{3}}. Then  any minimising sequence for $E_{\mu}$ on $\mathcal M_{\emph{rad},\mu}$ is convergent.
So $\mathfrak m_{\emph{rad},\mu}$ is achieved and the ground state
can be assumed of one sign.

Indeed the functional $E_{\mu}$ possesses infinitely many critical points
$\{u_{n}\}$ on $\mathcal M_{\emph{rad},\mu}$
with associated Lagrange multipliers  $\{\lambda_{n}\} \subset  (-\infty,0)$ satisfying
\begin{eqnarray*}
&&E_{\mu}(u_{n})=\frac{1}{2} \|u_{n}\|_{\mu}^{2}
+\int_{\mathbb R^{3}} F(u_{n})\to+\infty, \label{eq:HEnergyE}\\
&&\|u_{n}\|_{\mu}^{2}\to+\infty, \label{eq:HOscilE}\\
&&\lambda_{n} = - \left(\|u_{n}\|_{\mu}^{2}+\int_{\mathbb R^{3}} f(u_{n}) u_{n}  \right) \to -\infty. \label{eq:Enegativos}
\end{eqnarray*}
In particular \eqref{limite} has infinitely many solutions.
\end{theorem}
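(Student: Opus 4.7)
The plan is to mimic the proof of Theorem~\ref{th:signchanging}, adapted to the radial autonomous setting. The three ingredients at work are: the coercivity of $E_{\mu}$ (granted by \eqref{f_{1}}\hspace{-0,07cm}'), the compactness furnished by Lemma~\ref{lem:PSE}, and the evenness of the functional under the same assumption.

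First I would show that $\mathfrak m_{\text{rad},\mu}$ is attained and, more precisely, that any minimizing sequence converges up to a subsequence. Since $f$ is odd with $f\geq 0$ on $[0,+\infty)$, the primitive $F$ is even and nonnegative, whence $E_{\mu}(u)\geq \tfrac12\|u\|_{\mu}^{2}$ is coercive on $H^{1}_{\text{rad},\mu}(\mathbb R^{3})$. Given a minimizing sequence $\{u_n\}\subset \mathcal M_{\text{rad},\mu}$, boundedness plus the radial compact embedding give $u_n\rightharpoonup u$ weakly in $H^{1}_{\text{rad},\mu}(\mathbb R^{3})$ and $u_n\to u$ in $L^{p}(\mathbb R^{3})$ for every $p\in(2,6)$. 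Weak closedness of the constraint (the radial analogue of Proposition~\ref{prop:weaklyclosed}, via Lemma~\ref{lem:phi}\eqref{propphivv}) places $u$ in $\mathcal M_{\text{rad},\mu}$; \eqref{f_{2}}--\eqref{f_{3}} together with the uniform $H^{1}$-bound yield $\int F(u_n)\to\int F(u)$, and combined with the weak lower semicontinuity of $\|\cdot\|_{\mu}$ this forces $\|u_n\|_{\mu}\to\|u\|_{\mu}$, hence strong convergence. A one-sign ground state is then obtained by replacing $u$ by $|u|$: it is radial, lies in $\mathcal M_{\text{rad},\mu}$ (since $\phi_{|u|}=\phi_u$), and carries the same energy ($F$ even).

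Next I would produce infinitely many critical points by Krasnoselskii genus theory on the constraint. The radial counterpart of Corollary~\ref{lem:genus} gives that $\mathcal M_{\text{rad},\mu}$ is closed, symmetric and of infinite genus, being homeomorphic through the odd map $\xi$ of Lemma~\ref{lem:variedade} to the unit sphere of $H^{1}_{\text{rad},\mu}(\mathbb R^{3})$. Since $E_{\mu}$ is even (under \eqref{f_{1}}\hspace{-0,07cm}') and satisfies the $(PS)$ condition by Lemma~\ref{lem:PSE}, the classical minimax levels
\[
c_{k}\,:=\,\inf_{\substack{A\subset\mathcal M_{\text{rad},\mu}\\ A\text{ closed, symmetric,}\ \gamma(A)\geq k}}\ \sup_{u\in A} E_{\mu}(u)
\]
are critical values of $E_{\mu}|_{\mathcal M_{\text{rad},\mu}}$ with $c_{k}\to+\infty$. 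Palais' Symmetric Criticality Principle then lifts each critical point to a weak solution of \eqref{limite}.

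The asymptotic statements are routine bookkeeping. From $F\geq 0$ one has $E_{\mu}(u_n)\geq \tfrac12\|u_n\|_{\mu}^{2}$, so $E_{\mu}(u_n)\to+\infty$ forces $\|u_n\|_{\mu}\to+\infty$. Testing the Euler--Lagrange equation with $u_n$ and using $\int\phi_{u_n}u_n^{2}=1$ yields, exactly as in \eqref{eq:PSvincoladaun}, $\lambda_{n}=-\bigl(\|u_n\|_{\mu}^{2}+\int f(u_n)u_n\bigr)$; since $f$ odd with $f\geq 0$ on $[0,+\infty)$ implies $f(t)t\geq 0$ for every $t\in\mathbb R$, we get $\lambda_{n}\leq -\|u_n\|_{\mu}^{2}\to -\infty$, in particular $\lambda_n<0$. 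I expect the main technical point to be squeezing strong convergence out of an arbitrary minimizing sequence, using the equality case in the weak semicontinuity of $\|\cdot\|_{\mu}$; the genus argument itself is largely bookkeeping once the $(PS)$ condition and the symmetries are in place.
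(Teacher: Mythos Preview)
Your approach is essentially the paper's (which simply says the proof is analogous to that of Theorem~\ref{th:signchanging}): coercivity, the Palais--Smale condition of Lemma~\ref{lem:PSE}, and Krasnoselskii genus theory on the symmetric manifold $\mathcal M_{\text{rad},\mu}$. Your direct argument for strong convergence of an arbitrary minimising sequence is in fact more than the paper spells out, since the paper only invokes the $(PS)$ condition, which \emph{a priori} would cover merely a Palais--Smale minimising sequence produced via Ekeland.

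There is, however, one genuine slip in your bookkeeping. From $F\geq 0$ you correctly get $E_{\mu}(u_n)\geq \tfrac12\|u_n\|_{\mu}^{2}$, but this inequality gives an \emph{upper} bound for $\|u_n\|_{\mu}^{2}$ in terms of $E_{\mu}(u_n)$, so $E_{\mu}(u_n)\to+\infty$ by itself does not force $\|u_n\|_{\mu}\to+\infty$. What you need is the reverse control: if $\|u_n\|_{\mu}$ stayed bounded along a subsequence, then by \eqref{f_{2}}--\eqref{f_{3}} and Sobolev one would have $\int F(u_n)\leq \delta|u_n|_2^{2}+C_{\delta}|u_n|_q^{q}\leq C$, hence $E_{\mu}(u_n)$ bounded, a contradiction. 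This is precisely the route the paper takes in the proof of Theorem~\ref{th:signchanging} (``since $\int F(u_n)\leq \int(u_n^{2}+C|u_n|^{p})$\ldots''). Once $\|u_n\|_{\mu}\to+\infty$ is established this way, your argument for $\lambda_n\to-\infty$ via $f(t)t\geq 0$ is correct and matches the paper.
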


In case condition $\eqref{f_{1}}$ holds then we have
\begin{theorem}\label{th:gs-mu}
Assume \eqref{f_{1}}\,\hspace{-0,1cm}-\eqref{f_{3}}. Then any minimising sequence for $E_{\mu}$ on $\mathcal M_{\emph{rad},\mu}$ is convergent.
So  $\mathfrak m_{\emph{rad},\mu}$ is achieved on
a radial function, hereafter denoted with $\mathfrak u$, and moreover
$$\mathfrak m_{\emph{rad},\mu} = \mathfrak m_{\mu} = \min_{u\in \mathcal M_{\mu}} E_{\mu}(u) = E_{\mu}(\mathfrak u)>0$$
Finally  $\mathfrak u$ is negative and then $E_{\mu}(\mathfrak u)=\frac12\|\mathfrak u\|^{2}_{\mu}$.
\end{theorem}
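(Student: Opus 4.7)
The plan is to combine the $(PS)$ condition on the radial manifold (Lemma~\ref{lem:PSE}) with a Schwarz symmetrisation argument that exploits the nonlocal constraint together with the sign structure produced by \eqref{f_{1}}.

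First I would take a minimising sequence $\{u_n\}\subset \mathcal M_{\mathrm{rad},\mu}$. Under \eqref{f_{1}} we have $F\geq 0$, so $E_\mu(u_n)\geq \tfrac{1}{2}\|u_n\|_\mu^2$ forces $\{u_n\}$ to be bounded in $H^1_{\mathrm{rad},\mu}(\mathbb R^3)$. Applying Ekeland's variational principle on the $C^1$ manifold $\mathcal M_{\mathrm{rad},\mu}$ (whose manifold structure is inherited from the arguments of Lemma~\ref{lem:variedade}), I may replace $\{u_n\}$ by a sequence at the same level which is $(PS)$ for $E_\mu$ on $\mathcal M_{\mathrm{rad},\mu}$; Lemma~\ref{lem:PSE} then delivers, up to subsequence, a limit $\mathfrak u\in \mathcal M_{\mathrm{rad},\mu}$ with $E_\mu(\mathfrak u)=\mathfrak m_{\mathrm{rad},\mu}>0$.

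Next I would prove $\mathfrak m_\mu=\mathfrak m_{\mathrm{rad},\mu}$; the inequality $\mathfrak m_\mu\leq\mathfrak m_{\mathrm{rad},\mu}$ is immediate from $\mathcal M_{\mathrm{rad},\mu}\subset\mathcal M_\mu$. For the reverse, given $u\in\mathcal M_\mu$ let $w=|u|^*$ be the Schwarz symmetrisation, so that $w$ is radial, $|w|_2=|u|_2$ and $|\nabla w|_2\leq|\nabla u|_2$ by P\'olya--Szeg\H{o}. A direct computation shows that $k(r)=(1-e^{-r})/r$ is strictly decreasing on $(0,\infty)$ (its sign is controlled by $(r+1)e^{-r}-1$, which vanishes at $0$ and has derivative $-re^{-r}\leq 0$), so by Riesz's rearrangement inequality applied to the positive radially decreasing kernel $k(|x-y|)$,
\begin{equation*}
\int_{\mathbb R^{3}}\phi_w w^2 \;=\;\iint k(|x-y|)\,w^2(x)w^2(y)\,dxdy\;\geq\;\iint k(|x-y|)\,u^2(x)u^2(y)\,dxdy\;=\;1.
\end{equation*}
Hence the projection scalar $s:=\bigl(\int\phi_w w^2\bigr)^{-1/4}\in(0,1]$ produces $sw\in\mathcal M_{\mathrm{rad},\mu}$, and so does $-sw$; since $F(-sw)\equiv 0$ by \eqref{f_{1}},
\begin{equation*}
E_\mu(-sw)=\tfrac12 s^2\|w\|_\mu^2\;\leq\;\tfrac12\|w\|_\mu^2\;\leq\;\tfrac12\|u\|_\mu^2\;\leq\;E_\mu(u),
\end{equation*}
and taking the infimum over $u\in\mathcal M_\mu$ yields $\mathfrak m_{\mathrm{rad},\mu}\leq\mathfrak m_\mu$.

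Finally, comparing $\mathfrak u$ with $v:=-|\mathfrak u|\in\mathcal M_{\mathrm{rad},\mu}$ gives $E_\mu(v)=\tfrac12\|\mathfrak u\|_\mu^2\leq E_\mu(\mathfrak u)=\mathfrak m_{\mathrm{rad},\mu}$, so after relabelling I may assume $\mathfrak u\leq 0$; the identity $E_\mu(\mathfrak u)=\tfrac12\|\mathfrak u\|_\mu^2$ then follows from $F(\mathfrak u)\equiv 0$. The main obstacle in the plan is the second step: the constraint $\int\phi_u u^2=1$ is \emph{not} preserved by radial decreasing rearrangement (as the authors explicitly observe in the introduction), and the key technical point is to verify the monotonicity of the kernel $k$ and then exploit the slack produced by Riesz's inequality via the rescaling $s\leq 1$, a step which is only harmless thanks to the sign condition $F\geq 0$ coming from \eqref{f_{1}}.
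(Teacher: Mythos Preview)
Your argument is correct and follows essentially the same route as the paper: existence of the radial minimiser via the $(PS)$ condition on $\mathcal M_{\mathrm{rad},\mu}$, then Schwarz symmetrisation combined with the Riesz rearrangement inequality (exploiting that the Bopp--Podolsky kernel $k(r)=(1-e^{-r})/r$ is radially decreasing) to show that any $u\in\mathcal M_\mu$ is beaten by a radial competitor, and finally the replacement $\mathfrak u\mapsto -|\mathfrak u|$.

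The only genuine difference is in how you handle the $F$-term after symmetrising. The paper projects the symmetrised function $u^{*}$ itself onto the constraint via $t_{*}\leq 1$ and then compares $\int F(t_{*}u^{*})$ with $\int F(u)$ through the mean value theorem, using $t_{*}\leq 1$ and $f\geq 0$. You instead project $-|u|^{*}$, which kills the $F$-term outright by \eqref{f_{1}} and reduces the comparison to the pure norm inequality $\tfrac12 s^{2}\|w\|_\mu^{2}\leq \tfrac12\|u\|_\mu^{2}$. Your variant is slightly cleaner: it sidesteps the need to control $\int F(t_{*}u^{*})$ against $\int F(t_{*}u)$ for sign-changing $u$ (a step which, as written in the paper, tacitly requires first passing to a nonpositive competitor), and it makes transparent why the positivity of $f$ from \eqref{f_{1}} is exactly what is needed.
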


We strength the fact that  $\mathfrak u$ has minimal energy on the whole $\mathcal M_{\mu}$,
namely, even between nonradial functions.
\begin{proof}
We need just to prove that $\mathfrak u$ realizes the minimum among all functions in $\mathcal M_{\mu}$
and  for this it is sufficient to show that for any $u\in \mathcal M_{\mu}$
there is another function in $\mathcal M_{\textrm{rad},\mu}$ with less energy.
 
Then let $u\in \mathcal M_{\mu}$,
  denote with $ u^{*}$ its Schwartz symmetrization and set 
$t_{*}>0$ such that $t_{*}u^{*} \in \mathcal M_{\mu}$. 
By the rearrangement inequality (see \cite[Theorem 3.7]{LL}) we get
$$\frac{1}{t_{*}^{4}}=\int_{\mathbb R^{3}} \phi_{ u^{*}}  (u^{*})^{2}
\geq \int_{\mathbb R^{3}} \phi_{ u}  u^{2} = 1$$
and deduce that $t_{*}\leq1$. Consequently, using  the properties of the spherical
rearrangement and that $f$ is positive, for a suitable $\xi\in (0,1)$:
\begin{eqnarray*}
E_{\mu}(t_{*}u^{*}) - E_{\mu}(u) &\leq& \frac12(t_{*}^{2}-1)\| u \|^{2}_{\mu} + \int_{\mathbb R^{3}}\left( F(t_{*}u) - F(u)\right)\\
&=&\frac12(t_{*}^{2}-1)\| u \|^{2}_{\mu} + (t_{*} -1)\int_{\mathbb R^{3}} f (\xi u) \\
&\leq&0,
\end{eqnarray*}
which concludes the proof.
The final part follows by  $F(-|u|)\leq F(u).$
\end{proof}

\begin{remark}
Analogously to Remark \ref{rem:bifurcation-e} we have bifurcation of the negative ground states found in Theorem \ref{th:gs-mu}
from the trivial solution  also for the autonomous problem \eqref{limite}.
\end{remark}

The ground state  $\mathfrak u$  found in  Theorem \ref{th:gs-mu} 
 will have a special role from now on.

We observe that all we have seen up to now
was valid for any fixed $\varepsilon>0$ and it was never used that the infimum $V_{0}$
of $V$ is achieved.

\section{The barycentre map and proof of Theorem \ref{th:LST} and Theorem \ref{th:MT}}
\label{Bary}

Without the oddness assumption of $f$ (namely  condition \eqref{f_{1}}\hspace{-0,02cm}'),
the multiplicity result is obtained thanks to the smallness of $\varepsilon$
and the fact that $V_{0}$ is achieved on a subset $M\subset\mathbb R^{3}$:

$$
0<\min_{x\in \mathbb R^3}V(x)=:V_{0}, \quad \text{with }\ 
M=\Big\{x\in\mathbb R^{3}:V(x)=V_{0}  \Big\}.
$$
Without  without loss of generality, we assume $0\in M$.
Define the set of  
negative functions:
 $$
 N:=\left\{ u:\mathbb R^{3} \to (-\infty,0] \right\}.$$

Consider the autonomous problem
$$-\Delta u+V_{0} u +\lambda \phi_{u} u +f(u) = 0 \quad \text{in }\mathbb R^{3}$$
and let  $\mathfrak u$ be  the radial and negative function satisfying
$$\mathfrak m_{V_{0}}= \min_{u\in \mathcal M_{V_{0}}} E_{V_{0}}(u)=E_{V_{0}}(\mathfrak u) >0,$$
  see Theorem  \ref{th:gs-mu}.

 Finally, since $\mathcal M_{\varepsilon}\subset\mathcal M_{V_{0}}$
 and $V(x)\ge V_{0}$, it is
 $$E_{V_{0}}(\mathfrak u) \leq \mathfrak m_{\varepsilon}.$$
 
\medskip

For $T > 0$ define  
 $\eta$ the  smooth nonincreasing
cut-off function defined in $[0,\infty)$ by
\begin{equation*}\label{eta}
\eta(s)=
\begin{cases}
1 & \mbox{ if } 0 \leq s \leq T/2\\
0 & \mbox{ if } s\geq T
\end{cases}
\end{equation*}
and for any $y \in M$, set
\begin{eqnarray*}
\Psi_{\varepsilon , y }(x) :=\eta (|\varepsilon x - y|)\mathfrak u\biggl(\frac{\varepsilon x -y}{\varepsilon}\biggl).
\end{eqnarray*}
Let $t_{\varepsilon,y}:=t_{\varepsilon}(\Psi_{\varepsilon,y})>0$ 
such that
$t_{\varepsilon,y}\Psi_{\varepsilon,y}\in \mathcal M_{\varepsilon}$,
and define the map
\begin{equation*}\label{Phi}
\Phi_{\varepsilon}: y\in M\mapsto  t_{\varepsilon,y}\Psi_{\varepsilon ,y}\in \mathcal M_{\varepsilon}
\end{equation*}
which is easily seen to be continuous.
By construction, for any
$y\in M$,    $\Phi_{\varepsilon}(y)$  has compact support
and $\Phi_{\varepsilon}(y)\in \mathcal M_{\varepsilon} \cap N$.
In particular
$$I_{\varepsilon}(\Phi_{\varepsilon}(y)) =\frac12 \|\Phi_{\varepsilon}(y)\|_{W_{\varepsilon}}^{2}.$$

\medskip

\begin{lemma}\label{gio14}
Assume  \eqref{f_{1}}-\eqref{f_{3}}, \eqref{V1} and \eqref{C}. Then
$$\lim_{\varepsilon \rightarrow 0^{+}}I_{\varepsilon}(\Phi_{\varepsilon}(
y))=E_{V_{0}} (\mathfrak u), \quad \text{uniformly} \ in \ y \in M.
$$

\end{lemma}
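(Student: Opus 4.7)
I would prove this by a change of variables that decouples the $y$--dependence from the rescaled profile and reduces the claim to dominated convergence. Setting $z := x - y/\varepsilon$ turns $\Psi_{\varepsilon,y}(x)$ into the $y$--independent profile $\widetilde\Psi_\varepsilon(z) := \eta(\varepsilon|z|)\,\mathfrak{u}(z)$, so that all $y$--dependence concentrates in the sampling $V(\varepsilon z + y)$ of the potential. The rest is a bookkeeping computation for each of the three ingredients of $I_\varepsilon(\Phi_\varepsilon(y))$: the $H^1$ seminorm, the weighted $L^2$ piece, and the Fermi--surface normalisation $t_{\varepsilon,y}$.

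\emph{Kinetic and potential terms.} From $\nabla_z\widetilde\Psi_\varepsilon = \varepsilon\,\eta'(\varepsilon|z|)\tfrac{z}{|z|}\mathfrak{u}(z) + \eta(\varepsilon|z|)\nabla\mathfrak{u}(z)$, the first summand is $O(\varepsilon)$ in $L^2$ while the second is dominated by $|\nabla\mathfrak{u}|\in L^2$ and converges pointwise to $\nabla\mathfrak{u}$; dominated convergence gives $\int|\nabla\Psi_{\varepsilon,y}|^2\to\int|\nabla\mathfrak{u}|^2$ independently of $y$. For the potential, using $V(y)=V_0$ on $M$,
\[
\int V(\varepsilon x)\Psi_{\varepsilon,y}^2\,dx - V_0\!\int\!\mathfrak{u}^2 = V_0\!\int\![\eta(\varepsilon|z|)^2-1]\mathfrak{u}^2\,dz + \int [V(\varepsilon z+y)-V(y)]\,\eta(\varepsilon|z|)^2\,\mathfrak{u}(z)^2\,dz.
\]
The first piece is $y$--independent and vanishes by dominated convergence. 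For the second, I split at $|z|\leq R$ and $|z|>R$: on the bounded part, continuity of $V$ makes $V(\varepsilon z+y)-V(y)$ uniformly small in $y\in M$ as $\varepsilon\to 0^+$ for fixed $R$; the tail $|z|>R$ is absorbed by the $L^2$--decay of $\mathfrak{u}$, using local boundedness of $V$ on the $T$--neighbourhood of $M$ (which contains $\varepsilon\,\mathrm{supp}\,\widetilde\Psi_\varepsilon$).

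\emph{Fermi constraint and conclusion.} The same translation reduces the nonlocal integral to
\[
\int\phi_{\Psi_{\varepsilon,y}}\Psi_{\varepsilon,y}^{\,2} \;=\; \iint \frac{1-e^{-|z-z'|}}{|z-z'|}\,\eta(\varepsilon|z|)^2\mathfrak{u}(z)^2\,\eta(\varepsilon|z'|)^2\mathfrak{u}(z')^2\,dz\,dz',
\]
which is $y$--independent; by Theorem~\ref{HLS} the integrand is dominated uniformly in $\varepsilon$, and dominated convergence gives convergence to $\int\phi_{\mathfrak{u}}\mathfrak{u}^2=1$. Since $t_{\varepsilon,y}^{\,4}\int\phi_{\Psi_{\varepsilon,y}}\Psi_{\varepsilon,y}^{\,2}=1$, this forces $t_{\varepsilon,y}\to 1$ uniformly. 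Because $\Phi_\varepsilon(y)=t_{\varepsilon,y}\Psi_{\varepsilon,y}\leq 0$ and $f$ vanishes on $(-\infty,0]$, we have $F(\Phi_\varepsilon(y))\equiv 0$, so
\[
I_\varepsilon(\Phi_\varepsilon(y)) \;=\; \tfrac{1}{2}\,t_{\varepsilon,y}^{\,2}\,\|\Psi_{\varepsilon,y}\|_{W_\varepsilon}^{2} \;\longrightarrow\; \tfrac{1}{2}\|\mathfrak{u}\|_{V_0}^{2} \;=\; E_{V_0}(\mathfrak{u}),
\]
uniformly in $y\in M$. The only non-routine step is the uniform--in--$y$ handling of the potential term, which rests crucially on the identity $V\equiv V_0$ on $M$ together with continuity and local boundedness of $V$; the Bopp--Podolsky and kinetic parts are $y$--independent after the translation and follow by routine dominated convergence.
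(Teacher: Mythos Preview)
Your argument is correct and follows essentially the same route as the paper: after the translation $z=x-y/\varepsilon$, the kinetic and Bopp--Podolsky pieces become $y$--independent, dominated convergence handles each term, and $t_{\varepsilon,y}\to 1$ forces the conclusion. The only cosmetic difference is that the paper packages the uniformity in $y$ via a contradiction--and--subsequence argument (and reaches the nonlocal limit through Proposition~\ref{prop:weaklyclosed}), whereas you argue directly and observe that the nonlocal integral is $y$--free after translation; your presentation is in fact slightly cleaner on this point.
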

\begin{proof}
Suppose by contradiction that  there exist $\delta_{0}> 0$, $\varepsilon_{n}\rightarrow 0^{+}$
and  $\{y_{n}\} \subset M$
 such that
\begin{eqnarray}\label{1eq7}
| I_{\varepsilon_{n}}(\Phi_{\varepsilon_{n}}(y_{n}))- E_{V_{0}}(\mathfrak u)|
\geq \delta _{0}.
\end{eqnarray}

From the Lebesgue's Theorem, we deduce
\begin{eqnarray}\label{eq:um}
\lim_{n\rightarrow\infty} 
\int_{\mathbb R^{3}} |\nabla \Psi_{\varepsilon_{n},y_{n}} |^{2} =\int_{\mathbb R^{3}} |\nabla \mathfrak u|^{2},
\quad 
\lim_{n\rightarrow\infty} \int_{\mathbb R^{3}} V(\varepsilon_{n} x) 
\Psi_{\varepsilon_{n},y_{n}}^{2}
=V_{0}\int_{\mathbb R^{3}} \mathfrak u^{2}.
\end{eqnarray}
In particular 
$ \{\Psi_{\varepsilon_{n},y_{n}}\}$ is bounded in $W_{\varepsilon_{n}}$,
and so  weakly convergent to some $v\in W_{\varepsilon}$ 
and a.e. in $\mathbb R^{3}$. By  \eqref{eq:um} it has to be  $v=\mathfrak u$,
and therefore we have, due to the compactness assumption \eqref{C},
$$\Psi_{\varepsilon_{n},y_{n}}\rightharpoonup  \mathfrak u \quad \text{ in } W_{\varepsilon}
, \quad
\Psi_{\varepsilon_{n},y_{n}}\to \mathfrak u \quad \text{ in } L^{p}(\mathbb R^{3}), \ p\in (2,6).$$

Recalling  that
$\Phi_{\varepsilon_{n}}(y_{n})=  t_{\varepsilon_{n},y_{n}}\Psi_{\varepsilon_{n} ,y_{n}}\in \mathcal M_{\varepsilon_{n}}$
and Proposition \ref{prop:weaklyclosed} we get
$$\frac{1}{t_{\varepsilon_{n}, y_{n}}^{4}} =\int_{\mathbb R^{3}} \phi_{\Psi_{\varepsilon_{n}, y_{n}}}\Psi_{\varepsilon_{n}, y_{n}}^{2} = \int_{\mathbb R^{3}} \phi_{\mathfrak u}\mathfrak u^{2} +o_{n}(1) = 1+o_{n}(1)
$$
which implies that $t_{\varepsilon_{n},y_{n}}\to1$. But then using
\eqref{eq:um} 
we conclude that
\begin{eqnarray*}
I_{\varepsilon_{n}}(t_{\varepsilon_{n},y_{n}} \Psi_{\varepsilon_{n},  y_{n}})  &= &\frac{t_{\varepsilon_{n},y_{n}}^{2}}{2}\int_{\mathbb R^{3}}
|\nabla \Psi_{\varepsilon_{n}, y_{n}}|^{2} +  \frac{t_{\varepsilon_{n},y_{n}}^{2}}{2}
\int_{\mathbb R^{3}} V(\varepsilon_{n} x) 
\Psi_{\varepsilon_{n}, y_{n}}^{2}
\\
&\to& E_{V_{0}}(\mathfrak u).
\end{eqnarray*}
contradicting \eqref{1eq7}.
\end{proof}

\bigskip

By Lemma \ref{gio14}, 
$h(\varepsilon):=|I_{\varepsilon}(\Phi_{\varepsilon}(y))-E_{V_{0}}(\mathfrak u)|=o_{\varepsilon}(1)$
for $\varepsilon\to 0^{+}$ uniformly in $y$,
and then 
$$I_{\varepsilon}(\Phi_{\varepsilon}(y))-E_{V_{0}}(\mathfrak u)\leq 
 \big|I_{\varepsilon}(\Phi_{\varepsilon}(y))-E_{V_{0}}(\mathfrak u)\Big|\leq h(\varepsilon)
=o_{\varepsilon}(1).$$
In particular the sub level set
\begin{equation}\label{subnehari}
{\cal{M_{\varepsilon}}}^{E_{V_{0}}(\mathfrak u)+h(\varepsilon)}:=\Big\{u\in{\cal{M_{\varepsilon}}}:I_{\varepsilon}(u)\leq
E_{V_{0}}(\mathfrak u)+h(\varepsilon)\Big\}
\end{equation}
is not empty, since 
 for sufficiently small $\varepsilon$,
\begin{equation}\label{eq:MNP}
 \Phi_{\varepsilon}(y)\in {\cal{M_{\varepsilon}}}^{E_{V_{0}}(\mathfrak u)+h(\varepsilon)}\cap N.
\end{equation}

\bigskip

\noindent{\bf The barycentre map.} 
We are in a position now to define the barycenter map
that will send a convenient sublevel in $\mathcal M_{\varepsilon}$
in a suitable neighborhood of $M$.
From now on we fix a  $T >0$ in such a way that
$M$ and 
$$M_{2T}:=\Big\{x\in \mathbb R^{3}: d(x,M)\leq 2T\Big\}$$ are homotopically equivalent
($d$ denotes the euclidean distance).
In particular they are also homotopically equivalent to 
$$M_{T}:=\Big\{x\in \mathbb R^{3}: d(x,M)\leq T\Big\}.$$
Let $\rho=\rho(T) > 0$ be such that
$M_{2T}\subset B_{\rho}$ and  $\chi : \mathbb R^{3}\rightarrow
\mathbb R^{3}$ be defined as
\begin{equation*}\label{chi}
\chi(x)= 
\begin{cases}
x & \mbox{ if } | x | \leq \rho\\
\rho \dis\frac{x}{|x|} & \mbox{ if } | x | \geq \rho.
\end{cases}
\end{equation*}
 Finally, let the {\sl barycenter map}
$\beta_{\varepsilon}$ defined on functions with compact support $u\in W_{\varepsilon}$ by 
$$
\beta_{\varepsilon}(u):=\frac{\dis\int_{\mathbb R^{3}}\chi(\varepsilon x)
u^{2}}{\dis\int_{\mathbb R^{3}}u^{2}}\in \mathbb R^{3}.
$$

The next three lemmas give the behaviour of $\beta_{\varepsilon}$
and $I_{\varepsilon}$.

\begin{lemma}\label{gio15} 
Under assumption \eqref{V1}, the function $\beta_{\varepsilon}$ satisfies
\begin{eqnarray*}
\lim_{\varepsilon \rightarrow 0^{+}}\beta_{\varepsilon}(\Phi_{\varepsilon}(y)) = y, \ \
\text{uniformly in }  y \in M.
\end{eqnarray*}
\end{lemma}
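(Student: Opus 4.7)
The plan is to perform the natural change of variables $z = x - y/\varepsilon$ (so that $\varepsilon x - y = \varepsilon z$ and $dx = dz$), after which $\Psi_{\varepsilon,y}(x)$ becomes $\eta(|\varepsilon z|)\,\mathfrak{u}(z)$, a quantity that no longer depends on $y$ except through the cut-off scale. The scalar $t_{\varepsilon,y}^{2}$ factors out of both the numerator and denominator in the expression for $\beta_{\varepsilon}(\Phi_{\varepsilon}(y))$ and cancels, giving
\[
\beta_{\varepsilon}(\Phi_{\varepsilon}(y)) \;=\;
\frac{\displaystyle\int_{\mathbb R^{3}} \chi(y+\varepsilon z)\,\eta(|\varepsilon z|)^{2}\,\mathfrak{u}(z)^{2}\,dz}
{\displaystyle\int_{\mathbb R^{3}} \eta(|\varepsilon z|)^{2}\,\mathfrak{u}(z)^{2}\,dz}.
\]
Since $M\subset M_{2T}\subset B_{\rho}$, we have $\chi(y)=y$ for every $y\in M$, so it suffices to show that
\[
\beta_{\varepsilon}(\Phi_{\varepsilon}(y))-y \;=\;
\frac{\displaystyle\int_{\mathbb R^{3}} \bigl[\chi(y+\varepsilon z)-y\bigr]\,\eta(|\varepsilon z|)^{2}\,\mathfrak{u}(z)^{2}\,dz}
{\displaystyle\int_{\mathbb R^{3}} \eta(|\varepsilon z|)^{2}\,\mathfrak{u}(z)^{2}\,dz}
\;\longrightarrow\; 0
\]
uniformly in $y\in M$ as $\varepsilon\to 0^{+}$.

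The denominator poses no trouble: since $0\leq \eta(|\varepsilon z|)\leq 1$ and $\eta(|\varepsilon z|)\to \eta(0)=1$ pointwise, the Lebesgue dominated convergence theorem (with dominating function $\mathfrak{u}^{2}\in L^{1}(\mathbb R^{3})$) yields convergence to $|\mathfrak{u}|_{2}^{2}>0$, independently of $y$. For the numerator I would split the integration into $B_{R}(0)$ and its complement in the $z$-variable for a large radius $R$ to be chosen. On $\mathbb R^{3}\setminus B_{R}(0)$, use $|\chi(y+\varepsilon z)-y|\leq 2\rho$ together with the smallness of the tail $\int_{|z|>R}\mathfrak{u}^{2}$ to make the contribution arbitrarily small, uniformly in $y$ and in $\varepsilon$. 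On $B_{R}(0)$, exploit the uniform continuity of $\chi$ on compact sets: since $y\in M\subset B_{\rho}$ and $|\varepsilon z|\leq \varepsilon R$, for $\varepsilon$ small enough both $y$ and $y+\varepsilon z$ lie in $B_{\rho}$, where $\chi$ is the identity; hence $|\chi(y+\varepsilon z)-y|=|\varepsilon z|\leq \varepsilon R$ on this region, giving an $O(\varepsilon)$ contribution independent of $y$.

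The key point ensuring the uniformity is that the bound $2\rho$ on $|\chi(y+\varepsilon z)-y|$ (for the tail) and the identity $\chi(y+\varepsilon z)=y+\varepsilon z$ (for the ball $B_{R}$, once $\varepsilon R\leq \rho-\max_{M}|\cdot|$) are both independent of the particular $y\in M$; the only $y$-dependence is through the quantity $y$ itself, which has already been subtracted. I expect the main (mild) obstacle to be making the splitting-radius argument clean, namely fixing $R$ large so that $2\rho\int_{|z|>R}\mathfrak{u}^{2}<\delta\,|\mathfrak{u}|_{2}^{2}$ and then letting $\varepsilon$ be small enough so that $\varepsilon R$ is less than the distance from $M$ to $\partial B_{\rho}$. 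Putting the two estimates together and dividing by the denominator (bounded below away from zero for small $\varepsilon$) delivers the uniform convergence.
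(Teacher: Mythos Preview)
Your proof is correct and follows essentially the same route as the paper: both perform the change of variables $z = x - y/\varepsilon$, cancel the factor $t_{\varepsilon,y}^{2}$, and arrive at the identical expression for $\beta_{\varepsilon}(\Phi_{\varepsilon}(y))-y$ as a quotient of integrals involving $[\chi(y+\varepsilon z)-y]\,\eta(|\varepsilon z|)^{2}\mathfrak u(z)^{2}$. The only difference is stylistic: the paper argues by contradiction and appeals to Lebesgue's theorem along sequences $(\varepsilon_{n},y_{n})$, whereas you give a direct tail-splitting argument that makes the uniformity in $y\in M$ more explicit (using $M\subset M_{2T}\subset B_{\rho}$ so that $\rho-\sup_{M}|y|\geq 2T>0$).
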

\begin{proof} Suppose, by contradiction, that the lemma is
false. Then, there exist $\delta_{0}> 0$, $\varepsilon_{n}\rightarrow 0^{+}$ and $\{y_{n}\} \subset M$
 such that
\begin{eqnarray}\label{1eq11}
| \beta_{\varepsilon_{n}}(\Phi_{\varepsilon_{n}}(y_{n})) - y_{n}| \geq
\delta_{0}.
\end{eqnarray}
Using the definition of $\Phi_{\varepsilon_{n}}(y_{n}),\beta_{\varepsilon_{n}}$ and $\eta$ given above,
we have the  equality
\begin{eqnarray*}
\beta_{\varepsilon_{n}}(\Phi_{\varepsilon_{n}}(y_{n}))= y_{n} + \frac{\dis\int_{\mathbb R^{3}}
[\chi(\varepsilon_{n}z +
y_{n})-y_{n}]\Big|\eta(|\varepsilon_{n}z|)\mathfrak u(z)\Big|^{2}}
{\dis \int_{\mathbb R^{3}}\Big|\eta(|\varepsilon_{n}z|)\mathfrak u(z)\Big|^{2}}.
\end{eqnarray*}
Using the fact that $\{y_{n}\}\subset M\subset B_{\rho} $ and the
Lebesgue's Theorem, it follows
\begin{eqnarray*}
| \beta_{\varepsilon_{n}}(\Phi_{\varepsilon_{n}}(y_{n})) - y_{n}| = o_{n}(1),
\end{eqnarray*}
which contradicts (\ref{1eq11}) and the Lemma is proved.
\end{proof}

\begin{lemma}\label{lem:convground}
Assume
 \eqref{f_{1}}-\eqref{f_{3}}, \eqref{V1} and \eqref{C}.
If $\varepsilon_{n}\rightarrow 0$ and $\{u_{n}\}\subset \mathcal M_{\varepsilon_{n}}$ is such that
$I_{\varepsilon_{n}}(u_{n})\rightarrow   E_{V_{0}}(\mathfrak u)$,  then 
$\{u_{n}\}$ converges to $\mathfrak u$ in $H^{1}_{V_{0}}(\mathbb R^{3})$.

Then, for $n$ sufficiently large  $\{u_{n}\}$ can be assumed negative.

\end{lemma}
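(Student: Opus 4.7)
The strategy is a concentration--compactness argument: up to a translation $\{y_n\}\subset\mathbb R^3$ with $\varepsilon_n y_n$ converging into $M$, I would show that $v_n:=u_n(\cdot+y_n)$ converges strongly to $\mathfrak u$ in $H^1_{V_0}(\mathbb R^3)$, from which negativity for large $n$ follows from the strict negativity of $\mathfrak u$.

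\textbf{Boundedness and non-vanishing.} By \eqref{f_{1}}, $F\ge 0$, so $\tfrac12\|u_n\|_{W_{\varepsilon_n}}^2\le I_{\varepsilon_n}(u_n)\to E_{V_0}(\mathfrak u)$; with $V\ge V_0$, $\{u_n\}$ is bounded in $H^1_{V_0}(\mathbb R^3)$. Lemma \ref{lem:noLp} excludes $L^{12/5}$-vanishing, so Lions' lemma supplies $R,\delta>0$ and $y_n\in\mathbb R^3$ with $\int_{B_R(y_n)}u_n^2\ge\delta$. Setting $v_n(x):=u_n(x+y_n)$, extract $v_n\rightharpoonup v\neq 0$ in $H^1$, strongly in $L^p_{\rm loc}$ for $p\in[1,6)$; the translation covariance of $\phi$ from Lemma \ref{lem:phi}\eqref{propphiii} gives $\int\phi_{v_n}v_n^2=\int\phi_{u_n}u_n^2=1$.

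\textbf{Energy comparison and identification of the limit.} Using \eqref{C} in each of its three realizations (coercivity, the measure condition, or radial symmetry) I would first rule out $|\varepsilon_n y_n|\to\infty$; extract $\varepsilon_n y_n\to y^*\in\mathbb R^3$, so that by continuity of $V$ and \eqref{V1}, $V(\varepsilon_n(x+y_n))\to V(y^*)\ge V_0$ pointwise. Applying Brezis--Lieb to the $H^1$ and $L^q$ terms (the latter via the growth \eqref{f_{2}}-\eqref{f_{3}}) together with Fatou on the potential term and on $\int F$ (using $F\ge 0$), one obtains
\[
E_{V_0}(\mathfrak u)=\lim_n I_{\varepsilon_n}(u_n)\ge E_{V(y^*)}(v)+\liminf_n E_{V_0}(v_n-v).
\]
Let $t_v>0$ be the unique scalar with $t_v v\in\mathcal M_{V_0}$ (cf.\ \eqref{eq:te}); by Proposition \ref{prop:weaklyclosed} applied on balls together with Fatou, $\int\phi_v v^2\le 1$, hence $t_v\ge 1$, and since $F\ge 0$ and $F$ is nondecreasing on $[0,+\infty)$ one has $E_{V_0}(v)\le E_{V_0}(t_v v)$, which combined with $E_{V_0}(t_v v)\ge\mathfrak m_{V_0}=E_{V_0}(\mathfrak u)$ forces equality throughout the chain. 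This yields simultaneously $V(y^*)=V_0$ (so $y^*\in M$), $t_v=1$ (so $v\in\mathcal M_{V_0}$ and no mass escapes), $\liminf_n E_{V_0}(v_n-v)=0$, and $E_{V_0}(v)=\mathfrak m_{V_0}$, hence $v$ is a ground state; by Theorem \ref{th:gs-mu} it is a translate of $\mathfrak u$, which I absorb into $y_n$ so that $v=\mathfrak u$. Strong $H^1_{V_0}$-convergence $v_n\to\mathfrak u$ then follows from $\liminf_n E_{V_0}(v_n-v)=0$ together with $F\ge 0$ and $V_0>0$.

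\textbf{Negativity and main obstacle.} Since $\mathfrak u<0$ strictly, $v_n^+\to 0$ in $H^1$, so $u_n^+\to 0$. Since $-|u_n|\in\mathcal M_{\varepsilon_n}$ with $I_{\varepsilon_n}(-|u_n|)\le I_{\varepsilon_n}(u_n)$ (by \eqref{f_{1}}, $F(-|u_n|)=0\le F(u_n)$) and $\|u_n-(-|u_n|)\|_{H^1}=2\|u_n^+\|_{H^1}\to 0$, one may replace $u_n$ by $-|u_n|$, yielding a negative sequence for large $n$. The principal obstacle is the energy comparison step: the location $y^*\in M$ and the absence of mass escape in the constraint must be obtained \emph{simultaneously} from the energy equality, exploiting the minimality of $\mathfrak u$ together with the strict positivity of $V-V_0$ on $\mathbb R^3\setminus M$ and the positivity of $F$ on $(0,+\infty)$; any strict inequality in one of these terms would produce energy strictly larger than $E_{V_0}(\mathfrak u)$, contradicting the hypothesis.
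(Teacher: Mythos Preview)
Your route is quite different from, and much more elaborate than, the paper's. The paper simply observes that the constraint $\int\phi_u u^2=1$ does not involve $V$, so $\mathcal M_{\varepsilon_n}\subset\mathcal M_{V_0}$, and that $V\ge V_0$ gives the sandwich
\[
E_{V_0}(\mathfrak u)=\mathfrak m_{V_0}\le E_{V_0}(u_n)\le I_{\varepsilon_n}(u_n)\to E_{V_0}(\mathfrak u);
\]
hence $\{u_n\}$ is a minimizing sequence for $E_{V_0}$ on $\mathcal M_{V_0}$, and the paper defers to Theorem~\ref{th:gs-mu}. No translations, no Lions lemma, no Brezis--Lieb splitting appear: the compactness has already been packaged into the earlier result. (One may note that Theorem~\ref{th:gs-mu} as stated treats only \emph{radial} minimizing sequences, so the paper's appeal is itself somewhat loose; your self-contained approach would, if completed, fill that gap.)

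Within your argument there is however a genuine logical slip in the energy comparison. From $\int\phi_v v^2\le 1$ you get $t_v\ge 1$ and hence $E_{V_0}(v)\le E_{V_0}(t_v v)$; combined with $E_{V_0}(t_v v)\ge\mathfrak m_{V_0}$ this yields no \emph{lower} bound on $E_{V_0}(v)$. The chain $\mathfrak m_{V_0}\ge E_{V(y^*)}(v)+\liminf_n E_{V_0}(v_n-v)\ge E_{V_0}(v)$ is therefore perfectly compatible with $E_{V_0}(v)<\mathfrak m_{V_0}$ and some mass escaping in the remainder $v_n-v$; nothing forces $t_v=1$. To close this you need a genuine subadditivity step: split the constraint via a Brezis--Lieb type lemma for the nonlocal term, write $\int\phi_v v^2=\alpha\in(0,1]$, use that $-|w|\in\mathcal M_{V_0}$ with $E_{V_0}(-|w|)=\tfrac12\|w\|_{V_0}^2$ to get $\tfrac12\|w\|_{V_0}^2\ge\mathfrak m_{V_0}$ for every $w\in\mathcal M_{V_0}$, and deduce $E_{V_0}(v)\ge\alpha^{1/2}\mathfrak m_{V_0}$ and $\liminf_n E_{V_0}(v_n-v)\ge(1-\alpha)^{1/2}\mathfrak m_{V_0}$, whence $\alpha^{1/2}+(1-\alpha)^{1/2}\le 1$ forces $\alpha=1$. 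A smaller issue: Theorem~\ref{th:gs-mu} does not assert uniqueness of the minimizer, so ``$v$ is a translate of $\mathfrak u$'' is not justified; at best you obtain convergence to \emph{some} negative ground state.
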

\begin{proof}
Since $\{u_{n}\}\subset \mathcal M_{\varepsilon_{n}}\subset \mathcal M_{V_{0}}$
$$| I_{\varepsilon_{n}}(u_{n})- E_{V_{0}} (u_{n}) |\leq
 \int_{\mathbb R^{3}} \left(V(\varepsilon_{n} x) - V_{0}\right) u_{n}^{2} \to 0
$$
we deduce that $E_{V_{0}}(u_{n}) \to E_{V_{0}}(\mathfrak u)$, namely $\{u_{n}\}$
is a minimising sequence for $E_{V_{0}}$ on $\mathcal M_{V_{0}}$.
The result follows by Theorem \ref{th:gs-mu}.
\end{proof}

\begin{lemma}\label{lem:finalconvergence}
Assume  \eqref{f_{1}}-\eqref{f_{3}}, \eqref{V1} and \eqref{C}.
Then
$$\lim_{\varepsilon \rightarrow 0^{+}}\ \sup_{u \in
\cal{M_{\varepsilon}}^{E_{V_{0}}(\mathfrak u)+h(\varepsilon)}\cap N} \ \inf_{y \in
M_{T}}\Big| \beta_{\varepsilon} (u)- y \Big|  = 0;
$$
\end{lemma}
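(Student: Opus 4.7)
The plan is to argue by contradiction, assuming the existence of $\delta_0 > 0$, a sequence $\varepsilon_n \to 0^+$, and functions $u_n \in \mathcal M_{\varepsilon_n}^{E_{V_0}(\mathfrak u)+h(\varepsilon_n)} \cap N$ such that
$$\inf_{y \in M_T} \bigl|\beta_{\varepsilon_n}(u_n) - y\bigr| \geq \delta_0 \qquad \text{for all } n.$$

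First I would squeeze the energy down to the autonomous ground state level. Since $\mathcal M_{\varepsilon_n} \subset \mathcal M_{V_0}$ and $V(\varepsilon_n x) \geq V_0$ pointwise,
$$E_{V_0}(\mathfrak u) \leq E_{V_0}(u_n) \leq I_{\varepsilon_n}(u_n) \leq E_{V_0}(\mathfrak u) + h(\varepsilon_n),$$
and $h(\varepsilon_n) \to 0$ by Lemma \ref{gio14}. Hence $I_{\varepsilon_n}(u_n) \to E_{V_0}(\mathfrak u)$, putting $\{u_n\}$ under the hypotheses of Lemma \ref{lem:convground}. That lemma then delivers, along a subsequence, $u_n \to \mathfrak u$ in $H^1_{V_0}(\mathbb R^3)$; in particular $u_n \to \mathfrak u$ almost everywhere and $u_n^2 \to \mathfrak u^2$ in $L^1(\mathbb R^3)$.

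Next I would pass this strong convergence through the barycentre
$$\beta_{\varepsilon_n}(u_n) = \frac{\displaystyle\int_{\mathbb R^3} \chi(\varepsilon_n x)\, u_n^2\, dx}{\displaystyle\int_{\mathbb R^3} u_n^2\, dx}.$$
The denominator converges to $|\mathfrak u|_2^2 > 0$. For the numerator, the continuity of $\chi$ and $\chi(0) = 0$ give $\chi(\varepsilon_n x)\, u_n^2 \to 0$ a.e., while the bound $|\chi(\varepsilon_n x)\, u_n^2| \leq \rho\, u_n^2$ combined with $\rho u_n^2 \to \rho\, \mathfrak u^2$ in $L^1(\mathbb R^3)$ lets me invoke the generalized dominated convergence theorem to conclude that $\int \chi(\varepsilon_n x)\, u_n^2\, dx \to 0$. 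Therefore $\beta_{\varepsilon_n}(u_n) \to 0 \in M \subset M_T$, so $\inf_{y \in M_T} |\beta_{\varepsilon_n}(u_n) - y| \to 0$, contradicting the initial assumption.

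The main obstacle is the strong convergence step: one must argue that near-ground-state sequences for $I_{\varepsilon_n}$ necessarily concentrate near the minimum set $M$ of $V$, with the hypothesis $0 \in M$ (or, more generally, a bounded translation whose scaled position lies in $M$) pinning down the centre of concentration. This is precisely the content of Lemma \ref{lem:convground}; should the convergence only hold up to translations $\tilde y_n$ with $\varepsilon_n \tilde y_n \to y_0 \in M$, the identical computation yields $\beta_{\varepsilon_n}(u_n) \to y_0 \in M \subset M_T$ instead, producing the same contradiction. Once the concentration is in hand, the barycentre estimate is a routine dominated-convergence argument.
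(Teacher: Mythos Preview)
Your proof is correct and follows the same route as the paper: the energy squeeze $E_{V_0}(\mathfrak u)\le E_{V_0}(u_n)\le I_{\varepsilon_n}(u_n)\le E_{V_0}(\mathfrak u)+h(\varepsilon_n)$, the appeal to Lemma~\ref{lem:convground} to get $u_n\to\mathfrak u$ in $H^1_{V_0}(\mathbb R^3)$, and then a dominated-convergence computation on the barycentre. Your version is in fact cleaner than the paper's, which writes the barycentre identity with an arbitrary $y_n\in M_T$ in a way that is not literally correct; your choice to show directly that $\beta_{\varepsilon_n}(u_n)\to 0\in M\subset M_T$ (using $\chi(0)=0$ and generalized dominated convergence) is the right way to finish.
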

\begin{proof}Let $\{\varepsilon_{n}\}$ be such that
$\varepsilon_{n}\rightarrow 0^{+}$. For each $n \in \mathbb N$, there exists
$u_{n}\in  \cal{M}_{\varepsilon_{n}}^{E_{V_{0}}(\mathfrak u)+h(\varepsilon_{n})}\cap N$ such that 
$$
\inf_{y \in M_{T}}\Big| \beta_{\varepsilon_{n}} (u_{n})- y \Big|
= \sup_{u \in \cal{M}_{\varepsilon_{n}}^{E_{V_{0}}(\mathfrak u)+h(\varepsilon_{n})} \cap N} \inf_{y
\in M_{T}}\Big|\beta_{\varepsilon_{n}} (u)- y \Big|  +  o_{n}(1).
$$
Thus, it suffices to find a sequence $\{y_{n}\}\subset M_{T}$
such that
\begin{eqnarray}\label{1eq18}
\lim_{n\rightarrow  \infty}\biggl|\beta_{\varepsilon_{n}}(u_{n})-y_{n}\biggl| \ \ = 0.
\end{eqnarray}
Actually this holds for {\sl any} sequence $\{y_{n}\}\subset M_{T}$.
Indeed  since $\{u_{n}\}\subset  \mathcal M_{V_{0}}$
(and since
under assumption \eqref{f_{1}}, $\mathfrak u$
is the ground state of $E_{V_{0}}$ on the whole $\mathcal M_{V_{0}}$), 
 we have
\begin{eqnarray*}\label{1eq19}
 E_{V_{0}} (\mathfrak u) \leq E_{V_{0}} (u_{n}) 
 \leq I_{\varepsilon_{n}}(u_{n})\leq
E_{V_{0}}(\mathfrak u)+ h(\varepsilon_{n})
\end{eqnarray*}
which implies that 
$ I_{\varepsilon_{n}}(u_{n})\rightarrow E_{V_{0}}(\mathfrak u) .
$
Then by Lemma \ref{lem:convground},  
\begin{equation}\label{eq:convu}
\{u_{n}\}  \ \text{ is convergent in to $\mathfrak u$ in }  H^{1}_{V_{0}}(\mathbb R^{3}).
\end{equation}
Then if $\{y_{n}\}$ is any sequence in $M_{T}$, since
$$
\beta_{\varepsilon_{n}}(u_{n})= y_{n}+\frac{\dis\int_{\mathbb R^{3}}[\chi (\varepsilon_{n}z +
y_{n})- y_{n}] u_{n}^{2}(z)}{\dis\int_{\mathbb R^{3}}
u_{n}(z)^{2}},
$$
by using \eqref{eq:convu}
we see that $\{y_{n}\}$ verifies (\ref{1eq18}).
\end{proof}

\medskip

In virtue of  Lemma \ref{lem:finalconvergence},
 there exists $\varepsilon^{*}>0$ such that 
\begin{eqnarray*} 
&&\sup_{u\in {\mathcal M_{\varepsilon}}^{E_{V_{0}}(\mathfrak u)+h(\varepsilon)}  \cap N } d(\beta_{\varepsilon}(u), M_{T})<T/2, \label{eq:supN}.
\end{eqnarray*}

Define now
$$M^{+}:=M_{3T/2}=\Big\{x\in \mathbb R^{3}: d(x,M)\leq 3T/2\Big\}$$
so that  $M$ and $M^{+}$ are homotopically equivalent.

Now,  reducing  $\varepsilon^{*}>0$ if necessary,
we can assume that Lemma \ref{gio15},  Lemma \ref{lem:finalconvergence} and 
\eqref{eq:MNP} holds.
Then by standard arguments (see e.g. \cite{BC,BCP}) the composed map
\begin{equation}\label{eq:fundamentalN}
M\stackrel{\Phi_{\varepsilon}}{\longrightarrow} {\cal{M}_{\varepsilon}}^{E_{V_{0}}(\mathfrak u)+h(\varepsilon) }
\cap N\stackrel{\beta_{\varepsilon}}{\longrightarrow}M^{+}
\quad \text{ is homotopic to the inclusion map}
\end{equation}

At this point we can finish the proof of the multiplicity result by implementing the Ljusternick-Schnirelmann Theory.


\medskip

\noindent{\bf The Ljusternick-Schnirelmann category: proof of Theorem \ref{th:LST}.}
By \eqref{eq:fundamentalN} and the very well known properties of the category,
we get, for  any $\varepsilon\in (0,\varepsilon^{*}]$,
\begin{eqnarray*}
\cat({\cal{M}_{\varepsilon}}^{E_{V_{0}}(\mathfrak u)+h(\varepsilon)} \cap N)\geq
\cat_{M^{+}}(M).
\end{eqnarray*}
Then, since the $(PS)$ condition holds (Lemma \ref{lem:general}), the Ljusternik-Schnirelman Theory  (see e.g.
\cite{Sz}) applies and
$I_{\varepsilon}$ has at least $\cat_{M^{+}}(M)=\cat(M)$ critical points
on ${\cal{M}_{\varepsilon}}$ with energy less then $E_{V_{0}}(\mathfrak u)+h(\varepsilon)$;
so we have found  $\cat(M)$  solutions for problem \eqref{eq:equazione}
which are negative.


\medskip

To find the other solution we argue as in \cite{BCP}.
Since $M$ is not contractible, 
the  compact set $\mathcal A:=\overline{\Phi_{\varepsilon}(M)}$
can not be contractible in ${\cal{M}_{\varepsilon}}^{E_{V_{0}}(\mathfrak u)+h(\varepsilon)}$. 
Moreover we can choose $v\leq0, v\in\mathcal M_{\varepsilon}\setminus \mathcal A$ 
so $v$ can not be multiple of any element of $\mathcal A$.
In particular   
$I_{\varepsilon}(v)> E_{V_{0}}(\mathfrak u)+h(\varepsilon).$

Let 
$$\mathfrak C:=\Big\{tv +(1-t)u: t\in [0,1], u\in\mathcal A \Big\}$$
be the cone (hence compact and contractible) generated by $v$ over $\mathcal A$.
It follows that  $0\notin \mathfrak C$.

Consider   also (see the map defined in the proof of Lemma \ref{lem:variedade})
$$\xi_{\varepsilon}(\mathfrak C)=\Big\{t_{\varepsilon}(w)w: w\in \mathfrak C\Big\}$$
 the projection of the cone on $\mathcal M_{\varepsilon}$, compact  as well, and define
$$c:=\max_{t_{\varepsilon}(\mathfrak C)}I_{\varepsilon}
>E_{V_{0}}(\mathfrak u)+h(\varepsilon).$$
Since $\mathcal A\subset \xi_{\varepsilon}(\mathfrak C)\subset \mathcal M_{\varepsilon}$
and $\xi_{\varepsilon}(\mathfrak C)$ is contractible in $\mathcal M^{c}_{\varepsilon}:=\{u\in \mathcal M_{\varepsilon}: I_{\varepsilon}(u)\leq c\}$,
it follows that also $\mathcal A$ is contractible in $\mathcal M_{\varepsilon}^{c}$.

Summing up, we have a set $\mathcal A$ which is contractible in $\mathcal M^{c}_{\varepsilon}$
but not  in $\mathcal M_{\varepsilon}^{E_{V_{0}}(\mathfrak u)+h(\varepsilon)}$, and $c>E_{V_{0}}(\mathfrak u)+h(\varepsilon).$ The reason of that, since $I_{\varepsilon}$ satisfies the $(PS)$
condition, is due to the existence of another critical level 
between $E_{V_{0}}(\mathfrak u)+h(\varepsilon)$ and $c$. Then we have another critical
point in $\mathcal M_{\varepsilon}\cap N$ with higher energy.

%
The proof of Theorem \ref{th:LST} is thereby complete.

\medskip

\noindent {\bf The Morse Theory: proof of Theorem \ref{th:MT}.} 
Here we prove  Theorem \ref{th:MT} hence assumptions 
\eqref{fC1}-\eqref{f_{5}} as well as \eqref{V1} and \eqref{C}
are  assumed here once for all.

\medskip

Let us recall first few basic definitions and fix some notations.

Given a pair  $(X,Y)$ of topological spaces with
$Y\subset X,$ let $H_{*}(X,Y)$ be its singular homology with coefficients in some field $\mathbb F$
(from now on omitted) and 
$$\mathcal P_{t}(X,Y)=\sum_{k}\dim H_{k}(X,Y)t^{k}$$
its Poincar\'e polynomial. Whenever $Y=\emptyset$, then it will be always omitted in 
all the objects which involve the pair.

Recall also that  if $H$ is an Hilbert space,  $I:H\to \mathbb R$  a $C^{2}$ functional  and 
$u$ an isolated critical point  with $I(u)=c$, the  {\sl polynomial Morse index} of $u$ is
defined as
$$\mathcal I_{t}(u)=\sum_{k}\dim C_{k}(I,u)  t^{k}.$$
Here, given the sublevel $I^{c}=\{u\in H: I(u)\leq c\}$ and a neighborhood 
$U$ of the critical point $u$, 
 $C_{k}(I,u)=H_{k}(I^{c}\cap U, (I^{c}\setminus\{u\})\cap U)$ denote the critical groups.
The multiplicity of $u$ is the number $\mathcal I_{1}(u)$.

When $I''(u)$ is associated to a selfadjoint isomorphism,
then the critical point $u$ is said to be non-degenerate and 
it holds $\mathcal I_{t}(u)=t^{ m (u)}$,
where $ m(u)$ is the {\sl (numerical) Morse index of $u$}: the maximal dimension
of the subspaces where $I''(u)[\cdot,\cdot]$ is negative definite.

%
%

\bigskip

\begin{lemma}
The functional $I_{\varepsilon}$ is of class $C^{2}$ and
for $u,v,w\in W_{\varepsilon}$
\begin{equation*}\label{I''}
I_{\varepsilon}''(u)[v,w]=\int_{\mathbb R^{3}}\nabla v \nabla w+\int_{\mathbb R^{3}}V(\varepsilon x)vw+\int_{\mathbb R^{3}}f'(u)vw.
\end{equation*}
Moreover  $I_{\varepsilon}''(u)$ is represented by the operator
\begin{equation*}
\emph L_{\varepsilon}(u):=\emph R(u)+\emph K(u): W_{\varepsilon}\to W_{\varepsilon}',
\end{equation*}
where $\mathrm R(u)$ is the Riesz isomorphism  and $\emph{K}(u)$ is compact.
\end{lemma}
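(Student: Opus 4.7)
The plan is to proceed in three steps: first establish $C^2$-regularity of $I_\varepsilon$ and compute the Hessian; then identify the Riesz part of the representation; finally prove that the remaining piece is compact.

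\textbf{Step 1 ($C^2$-regularity and the formula for $I_\varepsilon''$).} The quadratic functional $u\mapsto\frac12\|u\|^2_{W_\varepsilon}$ is trivially $C^\infty$, with second differential equal to the scalar product on $W_\varepsilon$. For the nonlinear piece $\Phi(u):=\int_{\mathbb R^3}F(u)$, the growth estimates \eqref{eq:limitaf}--\eqref{eq:limitaf'} (available under \eqref{fC1}--\eqref{f_{5}}), combined with the continuous embeddings $W_\varepsilon\hookrightarrow L^2(\mathbb R^3)\cap L^q(\mathbb R^3)$, allow one to apply the standard machinery for Nemytskii operators to conclude $\Phi\in C^2(W_\varepsilon)$ with
$$\Phi'(u)[v]=\int_{\mathbb R^3}f(u)v,\qquad \Phi''(u)[v,w]=\int_{\mathbb R^3}f'(u)vw.$$
Continuity of $u\mapsto\Phi''(u)$ follows from dominated convergence applied through \eqref{eq:limitaf'}. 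Summing the contributions yields the announced formula for $I''_\varepsilon(u)[v,w]$.

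\textbf{Step 2 (Riesz part).} The bilinear form
$$(v,w)\;\mapsto\;\int_{\mathbb R^3}\nabla v\nabla w+\int_{\mathbb R^3}V(\varepsilon x)vw$$
coincides with the inner product of $W_\varepsilon$, so the induced operator $W_\varepsilon\to W_\varepsilon'$ is precisely the Riesz isomorphism $\mathrm R$. It does not actually depend on $u$, but we keep the notation $\mathrm R(u)$ for uniformity with the decomposition $\mathrm L_\varepsilon(u)=\mathrm R(u)+\mathrm K(u)$.

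\textbf{Step 3 (compactness of $\mathrm K(u)$).} Define $\mathrm K(u):W_\varepsilon\to W_\varepsilon'$ by $\mathrm K(u)v[w]:=\int_{\mathbb R^3}f'(u)vw$; by \eqref{eq:limitaf'} and continuity of $W_\varepsilon\hookrightarrow L^2\cap L^q$, $\mathrm K(u)$ is bounded. To see that it is compact, take $v_n\rightharpoonup 0$ in $W_\varepsilon$. Assumption \eqref{C} gives $v_n\to 0$ in $L^q(\mathbb R^3)$, while $\{v_n\}$ remains bounded in $W_\varepsilon$. Taking the supremum over $w\in W_\varepsilon$ of unit norm in \eqref{eq:limitaf'} and using continuity of the embeddings gives, for every $\delta>0$,
$$\|\mathrm K(u)v_n\|_{W_\varepsilon'}\;\le\;C\delta\,\|v_n\|_{W_\varepsilon}+C_\delta'\,|u|^{q-2}_q\,|v_n|_q.$$
Passing first to $\limsup_n$ and then to $\delta\to 0^+$ yields $\|\mathrm K(u)v_n\|_{W_\varepsilon'}\to 0$, so $\mathrm K(u)$ sends weakly convergent sequences into norm-convergent ones and is therefore compact. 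The only subtle point is the handling of the $L^2$-term in \eqref{eq:limitaf'}: assumption \eqref{C} supplies compactness only into $L^p(\mathbb R^3)$ for $p\in(2,6)$ and \emph{not} into $L^2$, so the standard $\delta$-trick above is indispensable. With Steps 1--3 combined, the statement is established.
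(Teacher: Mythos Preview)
Your proof is correct and follows essentially the same route as the paper: the $C^{2}$-regularity is drawn from \eqref{eq:limitaf'}, the Riesz part is identified with the $W_{\varepsilon}$ inner product, and compactness of $\mathrm K(u)$ is obtained by testing against $v_{n}\rightharpoonup 0$, using \eqref{C} for the $L^{q}$-term and the $\delta$-trick for the $L^{2}$-term. Your write-up is in fact more careful than the paper's in spelling out why the $\delta$-trick is needed (since \eqref{C} does not give compactness into $L^{2}$).
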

\begin{proof}
By \eqref{eq:limitaf'} $I_{\varepsilon}''$ is well defined and continuous.
Then 
\begin{equation*}
I_{\varepsilon}(u)\approx\textrm L_{\varepsilon}(u):=\textrm R(u)+\mathrm K(u): W_{\varepsilon}\to W_{\varepsilon}'.
\end{equation*}
Let us show that, for $u\in W_\varepsilon$,  $\mathrm{K}(u)$ is compact. Let then 
$v_{n}\rightharpoonup 0$ in $W_{\varepsilon}$ and $w\in W_{\varepsilon}$.
By \eqref{eq:limitaf'} we get 
that given $\delta > 0$ for some constant $C_{\delta}>0$:
$$
\int_{\mathbb R^{3}}\left|f'(u)v_{n}w\right| 
\leq \delta \left |v_{n}\right|_{2} \left|w\right|_{2} + C_{\delta}\left|u\right|^{q-2}_{q}\left|v_{n}\right|_{q}\left |w\right |_{q}
$$
and the last term tends to zero due to assumption \eqref{C}.
%
%
%
%
By the arbitrarily of  $\delta$, we  deduce
$$\|\mathrm K(u)[v_{n}]\|=\sup_{\|w\|_{W_{\varepsilon}}=1}\Big|\int_{\mathbb R^{3}}f'(u)v_{n}w\Big|\rightarrow 0,$$
namely the compactness of $\textrm{K}(u)$.
\end{proof}

Now  for $a\in(0,+\infty]$, define the sublevels of the functional
$$I_{\varepsilon}^{a}:=\Big\{u\in W_{\varepsilon}: I_{\varepsilon}(u)\leq a\Big\}\ , \qquad 
\mathcal M_{\varepsilon}^{a}:= \mathcal M_{\varepsilon}\cap  I_{\varepsilon}^{a}$$
and the sets of critical points
$$\mathcal K_{\varepsilon}:=\Big\{u\in W_{\varepsilon}: I'_{\varepsilon}(u)=0\Big\}\ ,\qquad
\mathcal K_{\varepsilon}^{a}:= \mathcal K_{\varepsilon}\cap  I_{\varepsilon}^{a}\ ,\qquad
(\mathcal K_{\varepsilon})_{a}:=\Big\{u\in \mathcal K_{\varepsilon}: I_{\varepsilon}(u)> a\Big\} .
$$
In the remaining part of this section we will follow \cite{BC,claudianorserginhorodrigo}.

Let $\varepsilon^{*}>0$ small as at the end of Section \ref{Bary} 
and let $\varepsilon\in (0,\varepsilon^{*}]$ be fixed.
In particular $I_{\varepsilon}$ satisfies the Palais-Smale condition.
 We are going to prove that $I_{\varepsilon}$ restricted to $\mathcal M_{\varepsilon}$
 has at least $2\mathcal P_{1}(M)-1$ critical points.
 \medskip
 
 We can assume,
 of course,  that there exists a regular value $b^{*}_{\varepsilon}>E_{V_{0}}(\mathfrak u)$
 for the functional $I_{\varepsilon}$. Moreover, possibly reducing $\varepsilon^{*}$, we can assume that, see \eqref{subnehari},
 $$\Phi_{\varepsilon}: M\to  \mathcal M^{E_{V_{0}}(\mathfrak u)+h(\varepsilon)}_{\varepsilon}\cap N\subset \mathcal M_{\varepsilon}^{b_{\varepsilon}^{*}}.$$

Since $\Phi_{\varepsilon}$ is injective, it induces
injective homomorphisms in the homology groups, then $\dim H_{k}(M)
\leq \dim H_{k}(\mathcal M_{\varepsilon}^{b_{\varepsilon}^{*}})$
and consequently
\begin{equation}\label{obvious1}
\mathcal P_{t}(\mathcal M_{\varepsilon}^{b_{\varepsilon}^{*}})=\mathcal P_{t}(M)+\mathcal Q(t), \qquad \mathcal Q\in \mathbb P,
\end{equation}
where  $\mathbb P$ is the set of all polynomials with non-negative integer coefficients.
 
 \medskip

As in  \cite[Lemma 5.2]{BC} we have
\begin{lemma}
Let  $r\in (0,E_{V_{0}}(\mathfrak u) )$  and $a\in(r,+\infty]$ a regular level for $I_{\varepsilon}$.
Then
 \begin{eqnarray}\label{PtP}
\mathcal P_{t}(I_{\varepsilon}^{a},I_{\varepsilon}^{r})&=&t \mathcal P_{t}(\mathcal M_{\varepsilon}^{a}).
\end{eqnarray}
\end{lemma}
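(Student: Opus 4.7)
My plan is to follow \cite[Lemma 5.2]{BC}, exploiting the radial trivialization
$$
\Psi:(s,u)\in(0,+\infty)\times\mathcal M_\varepsilon\ \longmapsto\ s\,u\ \in\ W_\varepsilon\setminus\{0\}
$$
provided by Lemma \ref{lem:variedade}, and reduce the computation of $H_\ast(I_\varepsilon^a,I_\varepsilon^r)$ to a cone/suspension statement over $\mathcal M_\varepsilon^a$.

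First I would establish strict monotonicity along each ray. For $u\in\mathcal M_\varepsilon$ and $g_u(s):=I_\varepsilon(su)$,
$$
g_u'(s)\;=\;s\,\|u\|_{W_\varepsilon}^{2}+\int_{\mathbb R^3}f(su)\,u\;>\;0\qquad\forall\,s>0,
$$
since by \eqref{fC1} $f(v)v\ge 0$ for every $v\in\mathbb R$ ($f\equiv 0$ on $(-\infty,0]$). Coercivity gives $g_u(+\infty)=+\infty$, so for every $c>0$ the equation $g_u(s)=c$ has a unique positive solution $s_c(u)$, depending continuously on $u$. Since $r<E_{V_0}(\mathfrak u)\le\mathfrak m_\varepsilon\le I_\varepsilon(u)$ for every $u\in\mathcal M_\varepsilon$, we have $s_r(u)<1$ for every $u$, and $s_a(u)\ge 1$ if and only if $u\in\mathcal M_\varepsilon^a$.

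Next I would build a strong deformation retract of $(I_\varepsilon^a,I_\varepsilon^r)$ onto a pair homotopy equivalent to $(C\mathcal M_\varepsilon^a,\mathcal M_\varepsilon^a)$, where $C\mathcal M_\varepsilon^a$ denotes the topological cone. Concretely, via $\Psi$ one identifies the band $I_\varepsilon^a\setminus\mathrm{int}(I_\varepsilon^r)$ with the fibration over $\mathcal M_\varepsilon$ whose fiber over $u$ is the half-open interval $(s_r(u),s_a(u)]$. The deformation pushes each fiber with $u\notin\mathcal M_\varepsilon^a$ (which lies entirely in $\{s<1\}$, since $s_a(u)<1$ there) down into $I_\varepsilon^r$, while leaving the fibers with $u\in\mathcal M_\varepsilon^a$ essentially unchanged; a cut-off in the scalar $s_a(u)-1$ ensures continuity across $\partial\mathcal M_\varepsilon^a=\{u\in\mathcal M_\varepsilon:I_\varepsilon(u)=a\}$, where $s_a(u)=1$.

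Finally, applying the classical cone homology isomorphism $H_k(CX,X)\cong\widetilde H_{k-1}(X)$ (immediate from the long exact sequence of the pair $(CX,X)$ together with the contractibility of $CX$) with $X=\mathcal M_\varepsilon^a$ yields $H_k(I_\varepsilon^a,I_\varepsilon^r)\cong\widetilde H_{k-1}(\mathcal M_\varepsilon^a)$; multiplying by $t^k$ and summing gives \eqref{PtP}. The main obstacle is the deformation step itself: one must design a retraction that is continuous where $s_a(u)$ crosses the value $1$, and that genuinely drives the rays over $\mathcal M_\varepsilon\setminus\mathcal M_\varepsilon^a$ into $I_\varepsilon^r$ (not merely into the intermediate band $I_\varepsilon^{-1}(r,a]$), so that the excision argument and the cone identification are valid.
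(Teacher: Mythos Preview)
Your plan follows the paper's own strategy (it simply cites \cite[Lemma~5.2]{BC}), but the deformation step you propose cannot work, and the obstruction is structural rather than technical. Your own computation shows $g_u'(s)=s\|u\|_{W_\varepsilon}^2+\int f(su)u>0$ for every $s>0$, so $I_\varepsilon$ is strictly increasing along each ray through the origin. Consequently every sublevel $I_\varepsilon^{c}$ is star-shaped about $0$ and hence contractible. In particular $I_\varepsilon^{r}$ is contractible, so there is no deformation of pairs carrying $(I_\varepsilon^{a},I_\varepsilon^{r})$ onto something homotopy equivalent to $(C\mathcal M_\varepsilon^{a},\mathcal M_\varepsilon^{a})$: that would force the contractible $I_\varepsilon^{r}$ to be homotopy equivalent to $\mathcal M_\varepsilon^{a}$, which in general it is not. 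More directly, since both $I_\varepsilon^{a}$ and $I_\varepsilon^{r}$ are contractible, the long exact sequence of the pair yields $H_k(I_\varepsilon^{a},I_\varepsilon^{r})=0$ for every $k$, so the left-hand side of \eqref{PtP} is identically $0$ while the right-hand side equals $t\,\mathcal P_t(\mathcal M_\varepsilon^{a})\neq 0$ whenever $\mathcal M_\varepsilon^{a}\neq\emptyset$.

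The underlying issue is that in \cite{BC} the functional has mountain-pass geometry along rays (increasing up to the Nehari manifold, then decreasing to $-\infty$), so low sublevels split into an ``inner'' and an ``outer'' component and are \emph{not} contractible; this is precisely what produces the factor $t$ in their identity. Here $I_\varepsilon\ge 0$ is coercive with $F\ge 0$, so that mechanism is absent and the Benci--Cerami identity does not carry over to the free sublevels $I_\varepsilon^{a}\subset W_\varepsilon$ as written. (Even granting a cone identification, note that $H_k(CX,X)\cong\widetilde H_{k-1}(X)$ involves \emph{reduced} homology, so your last line would yield $t\big(\mathcal P_t(\mathcal M_\varepsilon^{a})-1\big)$, not $t\,\mathcal P_t(\mathcal M_\varepsilon^{a})$.) The statement, as formulated with the ambient sublevels $I_\varepsilon^{a}\subset W_\varepsilon$, therefore appears to be a misquotation of the Benci--Cerami lemma; the Morse-theoretic count in this paper should be carried out for $I_\varepsilon$ restricted to the manifold $\mathcal M_\varepsilon$, where the relevant Morse relation reads $\sum_{u}\mathcal I_t(u)=\mathcal P_t(\mathcal M_\varepsilon^{a},\mathcal M_\varepsilon^{r})+(1+t)\mathcal Q(t)$ with $\mathcal M_\varepsilon^{r}=\emptyset$.
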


\medskip

 Then following result holds.
\begin{corollary}\label{t}
Let $r\in (0,\mathfrak m_{V_{0}})$. Then
\begin{eqnarray*}
\mathcal P_{t}(I_{\varepsilon}^{b_{\varepsilon}^{*}},I_{\varepsilon}^{r})&=&t\Big(\mathcal P_{t}(M)+\mathcal Q(t)\Big), \qquad \mathcal Q\in \mathbb P, \label{prima}\\
\mathcal P_{t}(W_{\varepsilon}, I_{\varepsilon}^{r})&=& 
t.\label{seconda}
\end{eqnarray*}
\end{corollary}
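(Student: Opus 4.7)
Both identities should reduce to the master formula \eqref{PtP} of the preceding lemma, applied with two different choices of the upper level $a$.

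For the first identity, the plan is to set $a = b_{\varepsilon}^{*}$. Recall that $b_{\varepsilon}^{*}$ was chosen at the beginning of the section as a regular value of $I_{\varepsilon}$ with $b_{\varepsilon}^{*} > E_{V_{0}}(\mathfrak u)$. Since by Theorem \ref{th:gs-mu} the ground state energy satisfies $\mathfrak m_{V_{0}} = E_{V_{0}}(\mathfrak u)$, the hypothesis $r \in (0, \mathfrak m_{V_{0}})$ gives $r \in (0, E_{V_{0}}(\mathfrak u))$, so the lemma applies and yields
$$\mathcal P_{t}(I_{\varepsilon}^{b_{\varepsilon}^{*}}, I_{\varepsilon}^{r}) = t\, \mathcal P_{t}(\mathcal M_{\varepsilon}^{b_{\varepsilon}^{*}}).$$
Substituting \eqref{obvious1} on the right-hand side produces the first identity with the polynomial $\mathcal Q \in \mathbb{P}$ inherited from \eqref{obvious1}.

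For the second identity, the plan is to take $a = +\infty$ in \eqref{PtP}, which is permitted since the lemma allows $a \in (r, +\infty]$. With this choice $I_{\varepsilon}^{a} = W_{\varepsilon}$ and $\mathcal M_{\varepsilon}^{a} = \mathcal M_{\varepsilon}$, so the formula reduces to
$$\mathcal P_{t}(W_{\varepsilon}, I_{\varepsilon}^{r}) = t\, \mathcal P_{t}(\mathcal M_{\varepsilon}).$$
It then remains to show $\mathcal P_{t}(\mathcal M_{\varepsilon}) = 1$. By Lemma \ref{lem:variedade}, $\mathcal M_{\varepsilon}$ is homeomorphic (via the projection map $\xi_{\varepsilon}$) to the unit sphere $\mathbb S_{\varepsilon}$ of $W_{\varepsilon}$. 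Because $W_{\varepsilon}$ is an infinite-dimensional Hilbert space, its unit sphere is contractible (a classical fact due to Kakutani/Bessaga), hence has trivial reduced homology and $\mathcal P_{t}(\mathbb S_{\varepsilon}) = 1$. Transporting this through the homeomorphism $\xi_{\varepsilon}$ gives $\mathcal P_{t}(\mathcal M_{\varepsilon}) = 1$ and concludes the proof.

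\textbf{Main obstacle.} The only nontrivial point is the invocation of \eqref{PtP} at $a = +\infty$: one must be confident that the deformation-lemma machinery used to derive \eqref{PtP} still works at the top level, which is essentially a remark about there being no Palais-Smale sequences to obstruct the deformation up to infinity on the unconstrained space. Everything else is a direct substitution into \eqref{PtP} and the observation that infinite-dimensional unit spheres are contractible.
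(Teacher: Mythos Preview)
Your proposal is correct and follows exactly the paper's own proof: the first identity by taking $a=b_{\varepsilon}^{*}$ in \eqref{PtP} and inserting \eqref{obvious1}, the second by taking $a=+\infty$ and using that $\mathcal M_{\varepsilon}$ is contractible. Your added justification of contractibility via Lemma \ref{lem:variedade} and the contractibility of the infinite-dimensional unit sphere is precisely what the paper leaves implicit.
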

\begin{proof}
The first equality follows by    \eqref{obvious1} and \eqref{PtP} simply by choosing  $a=b^{*}_{\varepsilon}$.
The second one follows by \eqref{PtP} with $a=+\infty$ and
recalling that  $\mathcal M_{\varepsilon}$ is contractible. 
\end{proof}

To deal with critical points above the regular level $b^{*}_{\varepsilon},$ we recall  also the following
result whose proof is only based on notions of algebraic topology 
and is exactly as in \cite[Lemma 5.6]{BC}, see also \cite[Lemma 2.4]{claudianorserginhorodrigo}.

\begin{lemma}\label{quadrato}
It holds
$$\mathcal P_{t}(W_{\varepsilon},I_{\varepsilon}^{b_{\varepsilon}^{*}})=t^{2}\Big(\mathcal P_{t}(M)+\mathcal Q(t)-1\Big), \qquad \mathcal Q\in \mathbb P.$$
\end{lemma}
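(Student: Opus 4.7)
The plan is to invoke the long exact sequence in singular homology of the triple $I_\varepsilon^r \subset I_\varepsilon^{b_\varepsilon^*} \subset W_\varepsilon$, namely
\begin{equation*}
\cdots \to H_k(I_\varepsilon^{b_\varepsilon^*}, I_\varepsilon^r) \to H_k(W_\varepsilon, I_\varepsilon^r) \to H_k(W_\varepsilon, I_\varepsilon^{b_\varepsilon^*}) \xrightarrow{\partial} H_{k-1}(I_\varepsilon^{b_\varepsilon^*}, I_\varepsilon^r) \to \cdots,
\end{equation*}
and to feed it the two Poincar\'e polynomial identities provided by Corollary \ref{t}. Abbreviating $R(t) := \mathcal P_t(M) + \mathcal Q(t) \in \mathbb P$, and observing that $R(0) \geq 1$ because $M \neq \emptyset$, the data read $\dim H_k(I_\varepsilon^{b_\varepsilon^*}, I_\varepsilon^r) = R_{k-1}$ for $k \geq 1$ (and $H_0(I_\varepsilon^{b_\varepsilon^*}, I_\varepsilon^r) = 0$), while $H_k(W_\varepsilon, I_\varepsilon^r)$ is $\mathbb F$ in degree $1$ and vanishes otherwise.

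A degree-by-degree inspection of the LES then collapses almost every portion: one gets $H_0(W_\varepsilon, I_\varepsilon^{b_\varepsilon^*}) = 0$, and for every $k \geq 3$ the two neighbouring terms $H_k(W_\varepsilon, I_\varepsilon^r)$ and $H_{k-1}(W_\varepsilon, I_\varepsilon^r)$ both vanish, forcing the isomorphism $H_k(W_\varepsilon, I_\varepsilon^{b_\varepsilon^*}) \cong H_{k-1}(I_\varepsilon^{b_\varepsilon^*}, I_\varepsilon^r)$, of dimension $R_{k-2}$. All the content concentrates in the central four-term segment
\begin{equation*}
0 \to H_2(W_\varepsilon, I_\varepsilon^{b_\varepsilon^*}) \to H_1(I_\varepsilon^{b_\varepsilon^*}, I_\varepsilon^r) \xrightarrow{\delta} \mathbb F \to H_1(W_\varepsilon, I_\varepsilon^{b_\varepsilon^*}) \to 0,
\end{equation*}
where $\delta$ is induced by the inclusion of pairs. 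Once $\delta$ is known to be surjective, one reads off $H_1(W_\varepsilon, I_\varepsilon^{b_\varepsilon^*}) = 0$ and $\dim H_2(W_\varepsilon, I_\varepsilon^{b_\varepsilon^*}) = R_0 - 1$, and assembly gives
\begin{equation*}
\mathcal P_t(W_\varepsilon, I_\varepsilon^{b_\varepsilon^*}) = (R_0 - 1)\, t^2 + \sum_{k \geq 3} R_{k-2}\, t^k = t^2\bigl(R(t) - 1\bigr) = t^2\bigl(\mathcal P_t(M) + \mathcal Q(t) - 1\bigr),
\end{equation*}
which is exactly the desired identity, with the very same $\mathcal Q$ as in Corollary \ref{t}.

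The step I expect to be the main obstacle is precisely the surjectivity of $\delta$. Geometrically it amounts to exhibiting a relative $1$-cycle in $(I_\varepsilon^{b_\varepsilon^*}, I_\varepsilon^r)$ whose boundary class in $\tilde H_0(I_\varepsilon^r) \cong H_1(W_\varepsilon, I_\varepsilon^r)$ generates $\mathbb F$, i.e.\ a path contained in $I_\varepsilon^{b_\varepsilon^*}$ joining the two connected components of $I_\varepsilon^r$. Since by choice $b_\varepsilon^* > E_{V_0}(\mathfrak u) \geq \mathfrak m_\varepsilon$ strictly exceeds the mountain-pass-type barrier that separates those components, such a path exists; the explicit construction is carried out verbatim as in \cite[Lemma 5.6]{BC} and \cite[Lemma 2.4]{claudianorserginhorodrigo}, which is why the excerpt appeals to those references rather than redoing the argument.
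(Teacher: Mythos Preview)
Your long-exact-sequence argument for the triple $I_\varepsilon^r\subset I_\varepsilon^{b_\varepsilon^*}\subset W_\varepsilon$, fed by the two identities of Corollary~\ref{t}, is precisely the content of \cite[Lemma~5.6]{BC} that the paper invokes; the algebraic scaffolding you wrote out is correct and the approaches coincide.

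There is, however, a real problem with your last paragraph. The geometric picture you invoke --- $I_\varepsilon^r$ having \emph{two connected components} separated by a ``mountain-pass-type barrier'' at level $\mathfrak m_\varepsilon$ --- is imported from the Benci--Cerami setting, where the free functional is unbounded from below. In the present paper the free functional $I_\varepsilon(u)=\tfrac12\|u\|_{W_\varepsilon}^2+\int F(u)$ is nonnegative and coercive, and since $f\ge0$ under \eqref{f_{1}} one has $I_\varepsilon(tu)\le I_\varepsilon(u)$ for every $t\in[0,1]$; consequently every sublevel $I_\varepsilon^a$ is star-shaped at the origin, in particular connected, and there is no mountain-pass geometry on $W_\varepsilon$ whatsoever. (Incidentally, your inequality $E_{V_0}(\mathfrak u)\ge\mathfrak m_\varepsilon$ is reversed: the paper establishes $E_{V_0}(\mathfrak u)\le\mathfrak m_\varepsilon$, and $\mathfrak m_\varepsilon$ is the minimum of $I_\varepsilon$ on the constraint $\mathcal M_\varepsilon$, not a saddle level on $W_\varepsilon$.) So the sentence explaining \emph{why} $\delta$ is onto is simply false in this setting, even though in \cite{BC} it is exactly the right explanation. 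Since you ultimately fall back on the same references as the paper for this step, your argument is neither more nor less complete than the paper's own citation; but you should not claim that the Benci--Cerami construction carries over ``verbatim'', because the underlying topology of the pair $(W_\varepsilon,I_\varepsilon^r)$ here is genuinely different from the Nehari-manifold situation those references treat.
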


Then by using the Morse Theory we arrive at the following fundamental result.

\begin{corollary}\label{separato}
Suppose that 
the set $\mathcal K_{\varepsilon}$
is discrete. Then
\begin{eqnarray*}\label{1}
\sum_{u\in \mathcal {K_{\varepsilon}}^{b^{*}_{\varepsilon}}}\mathcal I_{t}(u)=t\Big(\mathcal P_{t}(M)+\mathcal Q(t)\Big)+(1+t)\mathcal Q_{1}(t)
\end{eqnarray*}
and 
\begin{eqnarray*}
\sum_{u\in(\mathcal K_{\varepsilon})_{b^{*}_{\varepsilon}}} \mathcal I_{t}(u)=t^{2}\Big(\mathcal P_{t}(M)+\mathcal Q(t)-1\Big)+(1+t)\mathcal Q_{2}(t),
\end{eqnarray*}
where $\mathcal Q,\mathcal Q_{1}, \mathcal Q_{2}\in \mathbb P.$

\end{corollary}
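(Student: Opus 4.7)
The plan is to deduce both identities from the classical Morse relations (in polynomial form) applied to two pairs of sublevels, inserting the Poincar\'e polynomials that have already been computed in Corollary \ref{t} and Lemma \ref{quadrato}.

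First I would verify that the hypotheses of the Morse machinery are in force. The functional $I_{\varepsilon}$ is of class $C^{2}$ by the lemma preceding Corollary \ref{t}, and at every $u$ the second derivative is represented by $\mathrm L_{\varepsilon}(u)=\mathrm R(u)+\mathrm K(u)$ with $\mathrm K(u)$ compact; this Fredholm structure, combined with the discreteness of $\mathcal K_{\varepsilon}$ assumed in the statement, guarantees that for every critical point the critical groups $C_{k}(I_{\varepsilon},u)$ are well defined and finite-dimensional, so the polynomial indices $\mathcal I_{t}(u)$ belong to $\mathbb P$. The Palais--Smale condition, needed to realize the deformation lemma and to perform the Morse handle decomposition across the strips $\{r<I_{\varepsilon}\leq b_{\varepsilon}^{*}\}$ and $\{b_{\varepsilon}^{*}<I_{\varepsilon}\}$, is provided by Lemma \ref{lem:general}. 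Finally, $b_{\varepsilon}^{*}$ is a regular value by construction, while $r\in(0,\mathfrak m_{V_{0}})$ lies strictly below the infimum of $I_{\varepsilon}$ on the set of nontrivial critical points (since $\mathfrak m_{\varepsilon}\geq \mathfrak m_{V_{0}}>r$), so that the critical points with value in $(r,b_{\varepsilon}^{*}]$ coincide with those in $\mathcal K_{\varepsilon}^{b_{\varepsilon}^{*}}$ relevant to the statement.

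Once this setup is secured, for the first identity I would apply the Morse relation to the pair $(I_{\varepsilon}^{b_{\varepsilon}^{*}},I_{\varepsilon}^{r})$, obtaining
\begin{equation*}
\sum_{u\in\mathcal K_{\varepsilon}^{b_{\varepsilon}^{*}}}\mathcal I_{t}(u)
=\mathcal P_{t}\bigl(I_{\varepsilon}^{b_{\varepsilon}^{*}},I_{\varepsilon}^{r}\bigr)+(1+t)\mathcal Q_{1}(t),\qquad \mathcal Q_{1}\in\mathbb P,
\end{equation*}
and then substitute the value of $\mathcal P_{t}(I_{\varepsilon}^{b_{\varepsilon}^{*}},I_{\varepsilon}^{r})$ from Corollary \ref{t}. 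For the second identity I would take $a=b_{\varepsilon}^{*}$ and $b=+\infty$ in the same Morse relation, which (since the functional is bounded below and satisfies $(PS)$) gives
\begin{equation*}
\sum_{u\in(\mathcal K_{\varepsilon})_{b_{\varepsilon}^{*}}}\mathcal I_{t}(u)
=\mathcal P_{t}\bigl(W_{\varepsilon},I_{\varepsilon}^{b_{\varepsilon}^{*}}\bigr)+(1+t)\mathcal Q_{2}(t),\qquad \mathcal Q_{2}\in\mathbb P,
\end{equation*}
and the conclusion follows by plugging in the formula from Lemma \ref{quadrato}.

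The main obstacle is ensuring that the abstract Morse equalities can really be invoked in this slightly delicate setting: one has to justify the passage to the limit $b\to +\infty$ in the second pair (no critical values drift off to infinity, which is a consequence of the Palais--Smale property), and one has to account for the trivial critical point $u\equiv 0$, which lies in $I_{\varepsilon}^{r}$ and hence is correctly filtered out by the relative pair, contributing nothing to either side. Everything else is bookkeeping: the non-negativity of the coefficients of $\mathcal Q_{1}$ and $\mathcal Q_{2}$ is built into the Morse inequalities themselves, and the substitution of the Poincar\'e polynomials already computed in Corollary \ref{t} and Lemma \ref{quadrato} yields exactly the two claimed expressions.
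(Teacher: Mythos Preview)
Your approach is essentially identical to the paper's: the paper simply states that ``Morse theory gives'' the two identities
\[
\sum_{u\in \mathcal K^{b_{\varepsilon}^{*}}_{\varepsilon}}\mathcal I_{t}(u)=\mathcal P_{t}(I_{\varepsilon}^{b_{\varepsilon}^{*}}, I_{\varepsilon}^{r})+(1+t)\mathcal Q_{1}(t),\qquad
\sum_{u\in(\mathcal K_{\varepsilon})_{b_{\varepsilon}^{*}}} \mathcal I_{t}(u)=\mathcal P_{t}(W_{\varepsilon},I_{\varepsilon}^{b_{\varepsilon}^{*}})+(1+t)\mathcal Q_{2}(t),
\]
and then invokes Corollary \ref{t} and Lemma \ref{quadrato}. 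Your version supplies the verification of the hypotheses (the $C^{2}$ regularity, the Fredholm structure of $I_{\varepsilon}''$, the Palais--Smale condition, the regularity of the levels $r$ and $b_{\varepsilon}^{*}$, and the irrelevance of the trivial critical point) that the paper leaves implicit, but the argument is the same.
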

\begin{proof}
Indeed the Morse theory 
gives
\begin{eqnarray*}\label{1}
\sum_{u\in \mathcal K^{b_{\varepsilon}^{*}}_{\varepsilon}}\mathcal I_{t}(u)=\mathcal P_{t}(I_{\varepsilon}^{b_{\varepsilon}^{*}}, I_{\varepsilon}^{r})+(1+t)\mathcal Q_{1}(t)\\
\end{eqnarray*}
and 
\begin{eqnarray*}
\sum_{u\in(\mathcal K_{\varepsilon})_{b_{\varepsilon}^{*}}} \mathcal I_{t}(u)=
\mathcal P_{t}(W_{\varepsilon},I_{\varepsilon}^{b_{\varepsilon}^{*}})+(1+t)\mathcal Q_{2}(t)
\end{eqnarray*}
so that, by using  Corollary \ref{t} and Lemma \ref{quadrato}, we easily conclude
\end{proof}

Then by Corollary \ref{separato} we get
$$\sum_{u\in \mathcal K_{\varepsilon}}\mathcal I_{t}(u)=t\mathcal P_{t}(M)+t^{2}\Big(\mathcal P_{t}(M)-1\Big)+t(1+t)\mathcal Q(t)$$
for some $\mathcal Q\in \mathbb P.$ 
We easily deduce that, if the critical points of $I_{\varepsilon}$
are non-degenerate, then they are  at least $2\mathcal P_{1}(M)-1$,
if counted with their multiplicity.

Then the proof of Theorem \ref{th:MT} is  complete.
\medskip

\subsection*{Disclosure of potential conflict of interests}
The authors do not have any conflicts of interest to declare.

\end{document}